\documentclass[11pt]{amsart}
\usepackage{epsfig,graphicx, subfigure,epstopdf}
\usepackage{amsmath, amssymb,mathabx,mathrsfs}
\usepackage{color}
\usepackage[toc,page]{appendix}
\usepackage{comment}

%%%%%%%%%%
\setcounter{secnumdepth}{5}

%%%%%%%%%%

% THEOREMS -------------------------------------------------------
\newtheorem{thm}{Theorem}[section]
\newtheorem{cor}[thm]{Corollary}
\newtheorem{lem}[thm]{Lemma}
\newtheorem{prop}[thm]{Proposition}
\theoremstyle{definition}
\newtheorem{defn}[thm]{Definition}
\theoremstyle{remark}
\newtheorem{rem}[thm]{Remark}
\newtheorem{ex}[thm]{Example}
\numberwithin{equation}{section}

\theoremstyle{definition}

\addtolength{\hoffset}{-0.8cm}
\addtolength{\textwidth}{1.8cm}

\newcommand{\bd}{\textrm{bd}}
\newcommand{\inte}{\textrm{int}}
\newcommand{\ind}{\textrm{ind}}

\def\R{{\mathbb R}}

\begin{document}
%\doi{10.1080/1751375YYxxxxxxxx}
%\issn{1751-3766}
%\issnp{1751-3758}\jvol{00} \jnum{00} \jyear{2009} %\jmonth{March}

%\articletype{GUIDE}

\title[Combinatorial approach to detection of  fixed points]{Combinatorial approach to detection of fixed points, periodic orbits, and symbolic dynamics}

\author{Marian\ Gidea$^\dag$}
\address{Yeshiva University, Department of Mathematical Sciences, New York, NY 10016, USA }
\email{Marian.Gidea@yu.edu}

\author{Yitzchak Shmalo ${^\flat}$}
\address{Yeshiva University, Department of Mathematical Sciences, New York, NY 10016, USA }
\email{yitzchak.shmalo@mail.yu.edu }
\thanks{$^\dag$, $^\flat$ Research of M.G. and Y.S. was partially supported by NSF grant  DMS-0635607,  NSF grant DMS-1700154,  and by  the  Alfred P. Sloan Foundation grant G-2016-7320.}

\begin{abstract}
We present a combinatorial  approach to rigorously show the existence of fixed points, periodic orbits, and symbolic dynamics in discrete-time dynamical systems, as well as  to find numerical approximations of such objects. Our approach relies on the method  of `correctly aligned windows'. We subdivide the `windows' into cubical complexes, and we assign to the vertices of the cubes labels determined by the dynamics.
In this way we encode the dynamics information into a combinatorial structure.
We use a version of the Sperner Lemma saying that if the labeling satisfies certain conditions, then there exist fixed points/periodic orbits/orbits with prescribed itineraries.
Our arguments are elementary.
\end{abstract}

\keywords{Fixed points; periodic orbits; symbolic dynamics; Sperner's Lemma; cubical complex.}

% \PACS{PACS code1 \and PACS code2 \and more}
% \subclass{MSC code1 \and MSC code2 \and more}
\maketitle

\section{Introduction}
Numerical investigations of  discrete-time dynamical systems often require the approximation of the phase space and of the underlying map via a fine grid. Henceforth, the dynamics information is encoded into a combinatorial structure.
From the computational point of view it is important that such combinatorial structure should be as simple as possible.
While simplicial structures appear to be more elegant, cubical  structures present many practical advantages, including  the possibility of using cartesian coordinates, simple numerical  representation of maps as  multivalued maps, and lower computational costs of  higher  dimension homologies.
See. e.g., \cite{Kaczynski2003}.

In this paper we develop a combinatorial topology-based approach  to detect fixed points, periodic orbits, and symbolic dynamics in discrete-time dynamical systems. The approach relies  on the method of `correctly aligned windows', also known as `covering relations'. This method goes back to the geometric ideas of Conley, Easton and McGehee \cite{Conley68,EastonMcGehee79,Easton81}, while more recent, topological versions of the method have been developed in \cite{GideaZ2004}. The method can be described concisely as follows. A `window' (also known as an `$h$-set') is a multi-dimensional rectangle, whose boundary consists of an `exit set' and an `entry set'. One window is correctly aligned with (or `covers') another window under the map if the image of the first window is going across  the  second window, with the exit set of the first window coming out through the exit set of the second window, and without the image of the first window intersecting the  entry set of the second window. There is an additional condition that the crossing of the windows should be topologically nontrivial, which can be expressed in terms of the Brouwer degree.   The main results about correctly aligned windows can be summarized as follow:
\begin{itemize}
\item[(i)] If a window is correctly aligned with itself, then there is a fixed point inside that window; \item[(ii)] For a finite sequence  of windows (with a circular ordering), if each window is correctly aligned with the next window in the sequence, then there exists a periodic orbit inside those windows; \item[(iii)] For an infinite sequence of windows, if each window is correctly aligned with the next window, then there exists an orbit inside those windows;  \item[(iv)] { For a finite sequence of pairwise disjoint windows, with the correct alignment of pairs of windows described by a $0-1$ transition matrix,  there exists an invariant set inside those windows on which the dynamics is semi-conjugate to a topological Markov chain associated to that matrix.}
\end{itemize}
In principle,  this method only yields  existential type of results on fixed points/perodic orbits/orbits with prescribed itineraries.

In this paper, we provide an algorithmic approach to verify the that the crossing of the windows is topologically non-trivial, and  to detect numerically, up to a desired level of precision, fixed points/perodic orbits/orbits with prescribed itineraries.   Our method is constructive, can be implemented numerically  quite easily, and does not require  the computation of algebraic topology-type invariants  (e.g., Conley index, homology, Brouwer degree).

The approach proposed in this paper is based on combinatorial topology, particularly on the classical Sperner  Lemma \cite{Sperner1928}.
We regard each window as a multi-dimensional cube, and we construct a cubical decomposition of it.
Then we assign labels to all vertices of the cubical decomposition. The  label of a vertex $x$  is given by the hyperoctant where the vector $f_\chi(x)-x$ lands, where $f_\chi$ is the map that defines the dynamical system expressed in certain coordinates.
A   cube  is called completely labeled if the intersection of the hyperoctants corresponding to its labels is the zero vector. We can also assign an index to the labeling of a   cube. This index turns out to be related to the Brouwer degree (see Proposition \ref{prop:Bekker}). Thus the index can be computed via a simple recursive formula (see \eqref{eqn:defn_index}).
A non-zero index is a sufficient condition, but not necessary,  for the  labeling to be complete.

To check that one window is correctly aligned with another, it is sufficient to check that the labels of the vertices of the cubical decomposition that lie on the boundary of one of the  windows satisfy certain explicit conditions, and that the above mentioned index is non-zero.

If a window is correctly aligned with itself, a version of the Sperner Lemma  shows the existence of at least one small cube in the cubical decomposition that has non-zero index, hence  completely labeled. There may also exists small cubes that are completely labeled that have zero index.
In this setting, a small cube  with non-zero index yields a true fixed point, while
a small cube that is completely labeled yields a numerical approximation of a fixed point.

Similarly, in the case of sequences of windows, small cubes inside those windows that have non-zero index yield
true periodic orbits/orbits with prescribed itineraries, while small cubes that are completely labeled yield
 numerical approximations of periodic orbits/orbits with prescribed itineraries.

Checking that a small cube has non-zero index can be done via a finite computation. Completely labeled small cubes can be searched using a Nerve Graph algorithm similar to \cite{Su2002}.

In Section \ref{sec:preliminaries}, we recall the classical Sperner Lemma and some generalizations. In Section \ref{sec:sperner_cubical} we provide a new  version of the Sperner Lemma, for cubical complexes, which will be used in the subsequent sections. In Section \ref{sec:windows} we provide sufficient conditions for correct alignment of windows, in terms of the labeling of vertices of a cubical decomposition. We also prove several results on detection  -- in terms of the non-zero index/completely labeled cubes --
of true/approximate fixed points, periodic orbits, and orbits with prescribed itineraries. In Section \ref{sec:application} we illustrate the above procedure by an example, in which we find periodic orbits for the H\'enon map.

\section{Preliminaries}\label{sec:preliminaries}
\subsection{Sperner's Lemma for simplices}\label{sec:sperner}
Consider an $n$-simplex $T$, and $T=\bigcup_{i\in I} T_i$, with $I$ finite, a simplicial decomposition of $T$.
A labeling of $T$ is a map $\phi:T\to\{1,2,\ldots,n+1\}$. In particular, each vertex $v$ of $T$ and of any of the $T_i$'s is assigned a unique label $\phi(v)\in \{1,2,\ldots,n+1\}$.

%Let  $T=\bigcup_{i\in I} T_i$, with $I$ finite, be a simplicial decomposition of $T$, and let $\phi$ be a labeling of $T$.
A simplex $T$ is said to be \emph{completely labeled} if its vertices are assigned all labels from $\{1,2,\ldots,n+1\}$.

The labeling $\phi$ is called a \emph{Sperner labeling} if every  point $p$ that lies on some face $S$ of $T$  is assigned one of the labels of the vertices of $S$.

In the case of a completely labeled simplex, the Sperner condition is equivalent to a  \emph{non-degenerate labeling} condition,  that no $(n-1)$-dimensional face of $T$ contains points of  $(n+1)$ or more different labels.

\begin{figure}
\includegraphics[width=0.5\textwidth]{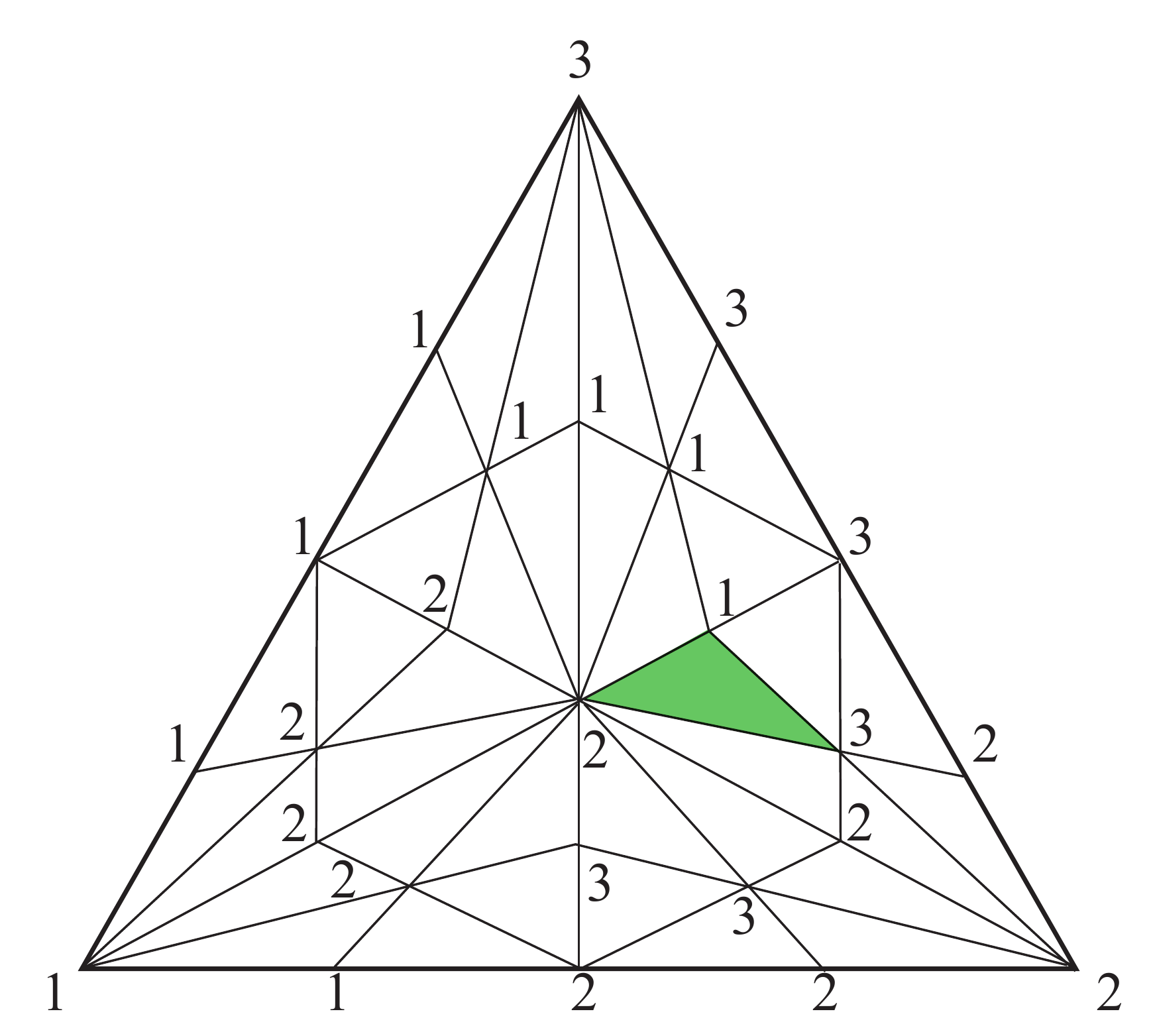}
\caption{Sperner labeling of a simplicial decomposition.}
\label{fig:sperner}
\end{figure}

In the most basic form the Sperner Lemma states the following:
\begin{thm}[Sperner Lemma \cite{Sperner1928}]
Given an $n$-simplex $T$ with a complete Sperner labeling,  a simplicial decomposition $T=\bigcup_{i} T_i$.
Then the number of completely labeled simplices $T_i$ is odd.
In particular,  there exists at least one simplex $T_i$ that is completely labeled.
\end{thm}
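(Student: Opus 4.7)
The plan is to argue by induction on the dimension $n$, using the classical ``door-counting'' parity argument. The key combinatorial gadget is the notion of a \emph{door}: an $(n-1)$-dimensional face of some sub-simplex $T_i$ whose vertices carry exactly the labels $\{1,2,\ldots,n\}$ (missing only the label $n+1$). I will count the total number of doors in two different ways and extract a parity statement.

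For the base case $n=1$, the simplex $T$ is a segment from a vertex labeled $1$ to a vertex labeled $2$, decomposed into sub-segments. A completely labeled sub-segment is one with endpoints carrying different labels, and walking from the left endpoint (label $1$) to the right endpoint (label $2$) along the decomposition, the number of label changes must be odd, giving the base case. For the inductive step, the plan is to analyze how many doors each $T_i$ contains: if $T_i$ is completely labeled, then exactly one of its facets misses the vertex labeled $n+1$ and hence is a door, giving one door; if the multiset of labels on $T_i$ equals $\{1,2,\ldots,n\}$ with exactly one repetition (so $n+1$ is absent), then $T_i$ has exactly two doors; in every other case $T_i$ contributes zero doors. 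Summing over $i$, the total door count equals
\[
\#\{\text{completely labeled } T_i\} + 2\cdot\#\{\text{double-door } T_i\}.
\]

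Next, I will count doors by whether they lie in the interior of $T$ or on its boundary. Each interior $(n-1)$-face is shared by exactly two sub-simplices, so interior doors contribute an even number to the sum. For boundary doors, the Sperner condition forces the crucial restriction: a door has vertices labeled $1,2,\ldots,n$, and by the Sperner property every such vertex on the boundary of $T$ must lie on a face of $T$ whose vertex-labels include all of $1,\ldots,n$. The only $(n-1)$-face of $T$ whose vertex label set equals $\{1,\ldots,n\}$ is the face $F$ opposite the vertex labeled $n+1$. Hence boundary doors are precisely the completely labeled $(n-1)$-sub-simplices of the induced simplicial decomposition of $F$, and the restricted labeling on $F$ is itself a complete Sperner labeling for an $(n-1)$-simplex. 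By the inductive hypothesis the number of such boundary doors is odd.

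Combining both counts and reducing modulo $2$ yields
\[
\#\{\text{completely labeled } T_i\} \equiv \#\{\text{boundary doors}\} \equiv 1 \pmod{2},
\]
which is exactly the claim. The step I expect to require the most care is verifying that the Sperner hypothesis indeed confines boundary doors to the face $F$ opposite the vertex labeled $n+1$ (this is where the hypothesis is genuinely used, and a careless formulation could admit doors on other boundary faces); once this is pinned down, the rest of the argument is bookkeeping.
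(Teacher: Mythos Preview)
Your argument is the classical door-counting (``trap-door'') parity proof and it is correct; in particular, your handling of the three cases for the door count of each $T_i$ and the use of the Sperner condition to confine boundary doors to the face opposite the vertex labeled $n+1$ are both accurate.

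However, there is nothing to compare against: the paper does not supply its own proof of this theorem. The Sperner Lemma is stated in Section~\ref{sec:sperner} as a classical result with a citation to \cite{Sperner1928}, and the paper immediately moves on to use it and its generalizations. So your write-up stands on its own as a valid proof of a result the paper takes for granted.
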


See Figure~\ref{fig:sperner}.
The Sperner Lemma can be used to derive an elementary proof of the Brouwer Fixed Point Theorem: any continuous map $f:B\to B$ from a  (homeomorphic copy of an) $n$-dimensional ball to itself has a fixed point. See, e.g.,
\cite{BurnsG2005}.

The Sperner Lemma can be used to numerically find fixed points, and it has been used extensively in numerical works,  see, e.g., \cite{Allgower2003}.

\subsection{Sperner's Lemma for polytopes}\label{sec:bekker}
We now describe a more general Sperner Lemma-type of result for polytopes, following \cite{Bekker1995}. A related  result can be found in \cite{Su2002}.

Let $P$ be a convex $n$-dimensional  polytope. We consider a labeling $\phi:P\to \{1,2,\ldots, n+1\}$ of $P$.
As in the simplex case, a polytope is said to be \emph{completely labeled} if  its vertices are assigned all labels from $\{1, 2, \ldots, n+1\}$.
Also, a labeling $\phi:P\to \{1,\ldots,n+1\}$ is said to be   \emph{non-degenerate} if no $(n-1)$-dimensional face of $P$ contains points which take $(n+1)$ or more different values.  
In the general case of a polytope, the non-degeneracy condition on the labeling is not equivalent to the Sperner condition.
In the sequel, we will assume that all labelings of the polytopes under consideration are non-degenerate.

In  applications, such as below, we often consider a polytope (cube) $P$ divided into a finite number of smaller polytopes (cubes) $P_i$, $i\in I$. In such  contexts, we only need to verify that the labels of the vertices of $P$ and the $P_i$'s lying on the faces of $P$ satisfy the \emph{non-degeneracy condition}. That is, we only need to verify such condition  on a finite number of points.

We  introduce some tools.

Consider the standard $n$-dimensional simplex $T=\textrm{conv}(0, e_1,e_2,\ldots ,e_n)\subset \mathbb{R}^n$, where we denote by $(e_1,e_2,\ldots ,e_{n})$ the standard basis of $\mathbb{R}^n$, and by $\textrm{conv}(\cdot)$ the convex hull of a set.
Let us declare the standard labeling of the vertices of $T$, given by  $\phi(0)=1$ and $\phi(e_i)=i+1$, $i=1,\ldots,n$,   as \emph{positively oriented}.  We also declare any labeling obtained by an even number of permutations of the labels of the vertices in the standard labeling also \emph{positively oriented}, and  any labeling obtained by an odd number of permutations  as \emph{negatively oriented}.

We define the \emph{oriented index} of a labeling $\phi$ of any oriented, convex polytope  $P$, recursively:
\begin{equation}\label{eqn:defn_index}
\textrm{ind}_P(\phi)=\left\{
                       \begin{array}{ll}
                         1, & \hbox{if $\dim(P)=0$ and $\phi(V(P))=\{1\}$;} \\[0.5em]
                         \pm 1, & \hbox{if $\dim(P)=1$ and $\phi(V(P))=\{1,2\}$,}\\ & {\textrm{according to its orientation};}\\[0.5em]
                         0, & \hbox{if $\dim(P)=k$ and $\phi(V(P))\neq\{1,2,\ldots,k+1\}$;} \\[0.5em]
                         \sum_{S\in\mathscr{F}_{k-1}(P)} \textrm{ind}_S(\phi), & \hbox{if $\dim(P)=k$ and $\phi(V(P))=\{1,2,\ldots,k+1\}$,}\\
& {\textrm{where } \textrm{ind}_S(\phi)  \textrm{ is counted with orientation}.}\\
                       \end{array}
                     \right.
\end{equation}
Above, $V(P)$ denotes the vertices of $P$, and  $\mathscr{F}_{k-1}( P)$ denotes the set of all $(k-1)$-faces of  $P$.
The orientation is taken into account in the following way. The orientation of a $k$-dimensional polytope $P$ induces an orientation of every $(k-1)$-dimensional face $S$ of $P$, and on all the lower dimensional faces of $S$.  In the definition of the index, only those $(k-1)$-dimensional faces that carry all labels  $\{1,2,\ldots,k\}$ count in the summation. The definition of the index yields a choice of sign $\pm 1$ to each  lower dimensional face carrying all labels which appears in the recursive definition.

\begin{rem}
By taking into account the orientation of the polytope, we get here a definition of the index slightly different from the one in \cite{Bekker1995}, which uses mod~2 summation in the above recursive definition. We note that the definition of the index in \cite{Bekker1995} is inconsistent with some of the results later in that paper, e.g., with Proposition \ref{prop:Bekker} quoted below.
\end{rem}

Notice that the index of a labeling $\phi$ of a polytope  defined above only depends on the values of $\phi$ at the vertices $V(P)$ of $P$.

The following result is immediate:
\begin{lem}[\cite{Bekker1995}]\label{lem:simplex}
If $T$ is a simplex and $\phi:T\to \{1,\ldots,n+1\}$ is a non-degenerate labeling, then
\[\ind_T(\phi)=\left\{
                       \begin{array}{ll}
                         \pm 1 , & \hbox{if $T$ is completely labeled;} \\
                        0, & \hbox{otherwise.}
                       \end{array}
                     \right.\]
\end{lem}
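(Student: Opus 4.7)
The plan is to induct on the dimension $n$ of the simplex. For $\dim(T)=0$ or $1$, the claim is immediate from the first two clauses of \eqref{eqn:defn_index}: a $0$-simplex $T$ is completely labeled iff its single vertex has label $1$, in which case $\ind_T(\phi)=1$; a $1$-simplex $T$ is completely labeled iff its two vertices receive the labels $1$ and $2$, in which case $\ind_T(\phi)=\pm 1$ according to the chosen orientation, and is $0$ otherwise (by the third clause).

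For the inductive step, suppose the result holds for $(n-1)$-simplices. Let $T$ be an $n$-simplex with non-degenerate labeling $\phi$. If $T$ is not completely labeled, i.e.\ $\phi(V(T))\neq\{1,\ldots,n+1\}$, then by the third clause of \eqref{eqn:defn_index} we have $\ind_T(\phi)=0$, and there is nothing to prove. So assume $T$ is completely labeled, with vertices $v_1,\ldots,v_{n+1}$ and $\phi(v_i)=\sigma(i)$ for some permutation $\sigma$ of $\{1,\ldots,n+1\}$. The $(n-1)$-faces of $T$ are $S_j=\textrm{conv}(v_1,\ldots,\widehat{v_j},\ldots,v_{n+1})$ for $j=1,\ldots,n+1$, with vertex labels $\{1,\ldots,n+1\}\setminus\{\sigma(j)\}$. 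According to the fourth clause of \eqref{eqn:defn_index}, only those faces whose vertex labels equal exactly $\{1,\ldots,n\}$ contribute to the index, which singles out the unique face $S_{j^*}$ with $j^*=\sigma^{-1}(n+1)$. By the inductive hypothesis applied to $S_{j^*}$ (which is completely labeled with labels $\{1,\ldots,n\}$), we get $\ind_{S_{j^*}}(\phi)=\pm 1$. Therefore $\ind_T(\phi)=\ind_{S_{j^*}}(\phi)=\pm 1$, completing the induction.

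The one genuine thing to check is that the sign is well-defined, i.e.\ that the induced orientation on the unique contributing face $S_{j^*}$ is unambiguous; this is a standard consequence of the fact that an orientation of a $k$-polytope induces a canonical orientation on each of its $(k-1)$-faces, so the recursive formula produces a single signed integer rather than an ambiguous one. I anticipate this orientation bookkeeping to be the only subtle point, but it does not affect the absolute value of the index, which is all the statement requires. Non-degeneracy of $\phi$ plays no role in the argument for a single simplex (it is automatic since a simplex has only $n+1$ vertices), but is recorded here because the lemma will be invoked later as a building block for polytopes with more vertices.
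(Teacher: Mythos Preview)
Your proof is correct. The paper itself does not prove this lemma at all --- it simply declares the result ``immediate'' and cites \cite{Bekker1995} --- so your inductive argument is a complete and appropriate elaboration of what the authors leave to the reader; the key observation that exactly one $(n-1)$-face carries the label set $\{1,\ldots,n\}$ (namely the one opposite the vertex labeled $n+1$) is precisely why the recursion collapses to a single $\pm 1$ term.
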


Given an $n$-dimensional, oriented, convex polytope  $P$, a   labeling  $\phi:P\to\{1,\ldots,n+1\}$, and the standard $n$-simplex $T$ of vertices $a_1,\ldots,a_{n+1}$, a \emph{realization} of $\phi$ is a continuous map $\Phi: P \to T$, satisfying the following condition:
\begin{itemize}
\item[(i)] If $v$ is a vertex of $P$ then $\Phi(v)=a_{\phi(v)}$, i.e., $\Phi(v)$ is the vertex $a_i$ of  $T$ with the index $i$ equal to the label of $v$;
\item[(ii)] If $S$ is face of $P$  with vertices $v_1,\ldots,v_k$,  then $\Phi(S)\subset \textrm{conv}(a_{\phi(v_1)},\ldots, a_{\phi(v_k)})$.
\end{itemize}
Informally, a realization of $P$ is a continuous mapping of $P$ onto $T$ that `wraps' $\partial P$ around $\partial T$, such that the labels of the vertices of $P$ match with the indices $i$ of the vertices of $T$. Such $\Phi$ is in general non-injective.

In the sequel we denote by $\deg(\cdot)$ the oriented Brouwer degree of a continuous function (see, e.g., \cite{BurnsG2005}).
Recall that for a smooth, boundary preserving map $\Phi:(M,\partial M)\to (N,\partial N)$ between two oriented $n$-dimensional manifolds with boundary, $\deg(\Phi)=({\int_{ M} \Phi^*\eta})/({\int _{ N} \eta})$ where $\eta$ is a volume form on $N$; the definition is independent of the volume form. Let $\omega$ be such that  $d\omega=\eta$. By Stokes' theorem and the fact that $d\Phi^*\omega=\Phi^*d\omega=\Phi^*\eta$, we have $\deg(\Phi)=({\int_{\partial M} \Phi^*\omega})/({\int _{\partial N} \omega})$. This implies the following property of the degree: $\deg(\Phi)=\deg(\partial \Phi)$, where  $\partial \Phi:\partial  M\to\partial N$ is the map induced by $\Phi$ on the boundaries.   Equivalently, the degree of the map $\Phi$ can be   defined as the signed number of preimages $\Phi^{-1}(p)=\{q_1,\ldots,q_k\}$ of a regular value  $p$ of the map $\Phi$, where each point $q_i$ is counted with a sign $\pm 1$ depending on whether  $d\Phi_{q_i}:T_{q_i}M\to T_{p}N$ is orientation preserving or orientation reversing. That is, $\deg(\Phi)=\sum_{q\in \Phi^{-1}(p)}\textrm{sign} (\det (d\Phi_{q}))$, where $p\in  N\setminus \partial N$ is a regular value of $\Phi$. The definition of the Brouwer degree extends via homotopy to continuous maps.
\newpage
\begin{prop}[\cite{Bekker1995}]\label{prop:Bekker}  Let $P$  be a convex  $n$-dimensional polytope.
\begin{itemize}
\item[(i)] Any non-degenerate  labeling $\phi$ of $P$ admits a realization $\Phi$;
\item[(ii)] Any two realizations of the same labeling are homotopic as maps of pairs $(P,\partial P)\mapsto (T,\partial T)$;
\item [(iii)] The index $\ind_P(\phi)$ of the labeling $\phi$ is equal to the degree $\deg(\Phi)$ of any realization $\Phi$ of $\phi$, up to a sign
\[\ind_P(\phi)=\pm \deg(\Phi).\]
\item[(iv)] If $\ind_P(\phi)\neq 0$ then $P$ is completely labeled.
\end{itemize}
\end{prop}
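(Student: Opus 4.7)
The plan is to dispatch parts (i), (ii), and (iv) by straightforward skeletal inductions or definitional observations, and to treat (iii) as the main substantive step. For (i), I would build $\Phi$ by induction on the $k$-skeleton of $P$. Start with $\Phi(v)=a_{\phi(v)}$ on vertices. Assuming $\Phi$ has been defined on the $(k-1)$-skeleton so that each face $S'$ already handled is mapped into $\textrm{conv}\{a_i: i\in \phi(V(S'))\}$, extend to each $k$-face $S$ by noting that $\Phi|_{\partial S}$ lands in the contractible (convex) set $\textrm{conv}\{a_i: i\in \phi(V(S))\}\subseteq T$; a cone-like extension from an interior point of $S$ to a chosen interior point of this target set works, and the subordination condition (ii) of a realization is preserved by construction. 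Non-degeneracy is only needed in top dimension (to ensure $\Phi$ maps $\partial P$ into $\partial T$); on lower-dimensional faces the extension is automatic by contractibility of the target.

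For (ii), the same skeletal induction produces a homotopy between two realizations $\Phi_0,\Phi_1$ of the same labeling. On vertices they agree, so the homotopy starts constant. Assuming a homotopy has been constructed on the $(k-1)$-skeleton, for each $k$-face $S$ one assembles $\Phi_0|_S$, $\Phi_1|_S$, and the previously-defined side homotopy into a continuous map $\partial(S\times[0,1])\to \textrm{conv}\{a_i: i\in \phi(V(S))\}$, which extends to $S\times[0,1]$ inside this contractible set. Faces of $P$ lying in $\partial P$ have their labels drawn from faces of $T$ lying in $\partial T$, so the resulting homotopy respects the pair structure $(P,\partial P)\to(T,\partial T)$.

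Part (iii) is the main obstacle, and I would prove it by induction on $n=\dim P$ using the boundary formula $\deg(\Phi)=\deg(\partial\Phi)$ recalled in the text. Choose a regular value $p$ in the relative interior of the face $F=\textrm{conv}(a_1,\ldots,a_n)\subset \partial T$, opposite the vertex $a_{n+1}$. By condition (ii) in the definition of a realization together with non-degeneracy, $(\partial\Phi)^{-1}(p)$ is concentrated in those $(n-1)$-faces $S$ of $P$ whose label set contains $\{1,\ldots,n\}$; non-degeneracy then forces $\phi(V(S))=\{1,\ldots,n\}$ exactly. On each such $S$, the restriction $\Phi|_S$ is itself a realization of $\phi|_S$ into the standard $(n-1)$-simplex $F$, so by the inductive hypothesis $\deg(\Phi|_S)=\pm\ind_S(\phi)$. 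Summing signed preimages and applying $\deg(\Phi)=\deg(\partial\Phi)$ gives $\deg(\Phi)=\sum_S \pm \ind_S(\phi)$, which matches the recursive formula \eqref{eqn:defn_index} up to a global sign. The delicate part of this step is verifying that the orientation sign contributed by each $S$ through the induced orientation on $\partial P$ agrees with the sign convention in \eqref{eqn:defn_index}; I would anchor the bookkeeping in the base case of the standard simplex via Lemma \ref{lem:simplex}, where a completely labeled simplex has degree $\pm 1$ according to its orientation, and then transfer the result to general $P$ using the homotopy invariance from (ii). Finally, (iv) is immediate from the third branch of \eqref{eqn:defn_index}: if $\phi(V(P))\neq\{1,\ldots,n+1\}$ then $\ind_P(\phi)=0$, so $\ind_P(\phi)\neq 0$ forces $P$ to be completely labeled.
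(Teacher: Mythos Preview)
The paper does not actually prove this proposition: it is quoted verbatim from \cite{Bekker1995} and no proof appears in the text (the narrative moves directly from the statement to the discussion preceding Lemma~\ref{lem:sumindices}). So there is no ``paper's own proof'' to compare against.

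That said, your sketch is a correct and standard route to the result. The skeletal inductions for (i) and (ii) are exactly how one constructs realizations and homotopies into a simplex, and your argument for (iii) --- reducing $\deg(\Phi)$ to $\deg(\partial\Phi)$, picking a regular value in the interior of the face $F=\textrm{conv}(a_1,\dots,a_n)$ opposite $a_{n+1}$, and using non-degeneracy to localize preimages to the $(n-1)$-faces carrying exactly the labels $\{1,\dots,n\}$ --- is the natural inductive proof and matches the recursive definition \eqref{eqn:defn_index} term by term. The one place to be careful, as you note, is the orientation bookkeeping: the induced boundary orientation on each contributing face $S\subset\partial P$ must be checked to agree (up to a single global sign) with the sign convention in \eqref{eqn:defn_index}. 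Anchoring this in Lemma~\ref{lem:simplex} for the base case is the right move; just make sure that when you pass from $\deg(\Phi|_S)=\pm\ind_S(\phi)$ to the sum you argue that the $\pm$ is \emph{coherent} across all faces (e.g., by fixing once and for all an identification of $F$ with the standard $(n-1)$-simplex and transporting orientations through it), rather than allowing an independent sign on each summand. Part (iv) is, as you say, immediate from the third clause of \eqref{eqn:defn_index}.
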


Let us consider  a convex, $n$-dimensional polytope $P$ that is subdivided into finitely many  $n$-dimensional polytopes $\{P_i\}_{i\in I}$, with $I$ finite, such that $P=\bigcup_{i\in I}P_i$, and for $i\neq j$, $\inte (P_i)\cap \inte (P_j)=\emptyset$ and $P_i\cap P_j$ is either empty or a face of both $P_i$ and $P_j$. In particular, each $k$-dimensional face of $P$ is   the union of finitely many $k$-dimensional faces of $P_i$'s.

For the following result from  \cite{Bekker1995} we provide an alternative proof.

\begin{lem}[\cite{Bekker1995}]\label{lem:sumindices}
If the labeling $\phi$ is non-degenerate, then
\[\ind_P(\phi)=\sum_i \ind_{P_i} (\phi).\]
\end{lem}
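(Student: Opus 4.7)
The plan is to use Proposition \ref{prop:Bekker}(iii) to replace each index by the Brouwer degree of a suitably chosen realization and then appeal to the additivity of the degree under a decomposition of the domain. The key preliminary step is to build a \emph{single} realization $\Phi:P\to T$ of $\phi$ whose restriction $\Phi_i:=\Phi|_{P_i}$ to each subpolytope is itself a realization of $\phi|_{P_i}$, so that the relevant degrees can be read off from preimages of one common regular value.

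\textbf{Construction of compatible realizations.} I would build $\Phi$ by induction on the dimension $k$ of the skeleton of the subdivision $\{P_i\}_{i\in I}$. On the $0$-skeleton, send each vertex $v$ of every $P_i$ to $a_{\phi(v)}\in T$. Assuming $\Phi$ has been defined on the $(k-1)$-skeleton, extend over each $k$-face $F$ of the subdivision as follows: by non-degeneracy, the vertices $v_1,\ldots,v_j$ of $F$ carry at most $k+1$ distinct labels, so the already-defined $\Phi|_{\partial F}$ lands in the face $C_F=\textrm{conv}(a_{\phi(v_1)},\ldots,a_{\phi(v_j)})$ of $T$; pick an interior base point of $F$ and a point of $C_F$, and cone off $F$ over $\partial F$ using the convex structure of $C_F$ to extend continuously into $C_F$. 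The resulting $\Phi$ is a realization of $\phi$, each $\Phi_i$ is a realization of $\phi|_{P_i}$, and every shared $(n-1)$-face $P_i\cap P_j$ is mapped into a proper face of $T$, hence into $\partial T$.

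\textbf{Degree additivity.} Orient $T$ once and orient each $P_i$ as a subregion of $P$, so that Proposition \ref{prop:Bekker}(iii) yields $\ind_P(\phi)=\deg(\Phi)$ and $\ind_{P_i}(\phi)=\deg(\Phi_i)$ with consistent signs. Smoothly perturb $\Phi$ within the class of realizations so that each $\Phi_i$ is smooth, and choose a regular value $p\in\inte(T)$ of all the $\Phi_i$ simultaneously. Since, by the previous step, $\Phi^{-1}(p)$ is disjoint from every shared face, it splits as $\bigsqcup_i\Phi_i^{-1}(p)$, and the signed preimage formula gives
\[
\deg(\Phi)=\sum_{q\in\Phi^{-1}(p)}\textrm{sign}\bigl(\det d\Phi_q\bigr)=\sum_i\sum_{q\in\Phi_i^{-1}(p)}\textrm{sign}\bigl(\det d\Phi_{i,q}\bigr)=\sum_i\deg(\Phi_i),
\]
so that $\ind_P(\phi)=\sum_i\ind_{P_i}(\phi)$.

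\textbf{Main obstacle.} The delicate point is the construction of compatible realizations, and in particular the guarantee that every interior shared $(n-1)$-face gets mapped into $\partial T$; this is precisely where non-degeneracy of $\phi$ is used, since it bounds the number of labels on each face of the subdivision and forces the intermediate target $C_F$ to be a proper face of $T$. A secondary bookkeeping issue is that Proposition \ref{prop:Bekker}(iii) asserts $\ind_P(\phi)=\pm\deg(\Phi)$, so one must fix a single orientation of $T$ and orient every $P_i$ consistently with $P$ to make the signs for $P$ and for all $P_i$ agree. A purely combinatorial alternative, which avoids realizations entirely, is induction on $n$ using the recursive formula \eqref{eqn:defn_index}: contributions from interior shared $(n-1)$-faces cancel in pairs because the orientations induced from the two adjacent subpolytopes are opposite, while boundary terms regroup, via the inductive hypothesis applied to each $(n-1)$-face of $P$, into $\sum_{F\in\mathscr{F}_{n-1}(P)}\ind_F(\phi)=\ind_P(\phi)$; some care is then required to handle subpolytopes that are not fully labeled.
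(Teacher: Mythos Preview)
Your realization-based approach has a genuine gap at precisely the point you flag as ``the delicate point''. You claim that by non-degeneracy each $k$-face $F$ of the subdivision carries at most $k+1$ labels, and hence that every interior shared $(n-1)$-face $P_i\cap P_j$ is sent into $\partial T$. But the non-degeneracy hypothesis in this paper constrains only the $(n-1)$-faces of $P$ itself; it says nothing about interior faces of the subdivision. An interior face $F=P_i\cap P_j$ can perfectly well carry all $n+1$ labels among its vertices, in which case $C_F=T$ and your coning construction sends $F$ into $\inte(T)$. Once that happens, $\Phi_i=\Phi|_{P_i}$ is no longer a map of pairs $(P_i,\partial P_i)\to(T,\partial T)$, the degree $\deg(\Phi_i)$ in the sense you invoke is undefined, and Proposition~\ref{prop:Bekker}(iii) does not apply to $P_i$. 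The degree route can likely be salvaged with local degree theory and a Sard-type choice of regular value avoiding the (measure-zero) image of the interior $(n-1)$-skeleton, but that is a different and longer argument than the one you wrote.

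By contrast, the ``purely combinatorial alternative'' you outline in your last paragraph is exactly the paper's proof: induction on $n$ using the recursion \eqref{eqn:defn_index}, with contributions from interior shared $(n-1)$-faces cancelling in pairs by opposite induced orientations, and the surviving boundary terms regrouped via the inductive hypothesis applied to each $(n-1)$-face of $P$. That argument never needs any control over the labels on interior faces beyond the orientation cancellation, which is why it goes through cleanly under the weak non-degeneracy hypothesis assumed here.
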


\begin{proof}
The proof follows by induction on the dimension $n$ of the polytope. When $n=1$ the identity is immediate.
For the induction step,  we use  \eqref{eqn:defn_index}. Note that each $(n-1)$-dimensional face $S$ of $P$ is the union of $(n-1)$-dimensional faces of $P_i$'s. Each $(n-1)$-dimensional face of a $P_i$ that is not lying on a $(n-1)$-dimensional face of $P$ is shared by two polytopes $P_i$ and $P_j$, and so it is counted twice with  opposite orientations.
Thus, the sum of the indices of the $(n-1)$-dimensional faces of the $P_i$'s reduces to the sum of the the indices of the $(n-1)$-dimensional faces of the $P_i$'s that lie on $(n-1)$-dimensional faces of $P$. The fact that the index of each $(n-1)$-dimensional face $S^j$ of $P$ is the sum of the indices of the $(n-1)$-dimensional faces $S^j_i$ of the $P_i$'s that lie on $S^j$ follows from the induction hypothesis.
In summary, we have:
\begin{equation*}\begin{split}\sum_i\ind_{P_i}(\phi)&=\sum_i\sum_{S'\in\mathscr{F}_{n-1}(P_i)} \textrm{ind}_{S'}(\phi)\\&=\sum_i\sum_{S^j_i\in\mathscr{F}_{n-1}(P_i)\cap \mathscr{F}_{n-1}(P)} \textrm{ind}_{S^j_i}(\phi)\\&=\sum_{S^j\in\mathscr{F}_{n-1}(P)} \textrm{ind}_{S^j}(\phi)\\&=\ind_{P}(\phi).
\end{split}\end{equation*}
\end{proof}

%A polytope $P_i$ is completely labeled if its vertices carry all labels $\{1,2,\ldots,n+1\}$.

The following is a generalization of the Sperner Lemma from  \cite{Bekker1995}.

\begin{thm}[\cite{Bekker1995}]\label{thm:bekker2}
Assume that $P$ is an $n$-dimensional polytope,   $P=\bigcup_{i\in I}P_i$ is a decomposition of $P$ into polytopes as above, and $\phi:P\to\{1,\ldots,n+1\}$ is a {\emph{non-degenerate labeling}}.   If $\ind_P(\phi)\neq 0$, then there exists a polytope $P_i$ such that $\ind_{P_i}(\phi)\neq 0$; in particular, $P_i$ is completely labeled.
\end{thm}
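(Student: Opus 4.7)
The plan is to observe that Theorem~\ref{thm:bekker2} follows essentially as a two-step corollary from Lemma~\ref{lem:sumindices} (additivity of the index under subdivision) and Proposition~\ref{prop:Bekker}(iv) (a nonzero index forces complete labeling). No geometric content beyond those two results is required.

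First, I would invoke Lemma~\ref{lem:sumindices} with the given decomposition $P=\bigcup_{i\in I}P_i$ and the non-degenerate labeling $\phi$, obtaining the identity
\[
\ind_P(\phi)=\sum_{i\in I}\ind_{P_i}(\phi).
\]
Then, since $\ind_P(\phi)\neq 0$ by hypothesis, at least one summand on the right cannot vanish; pick any index $i_0\in I$ with $\ind_{P_{i_0}}(\phi)\neq 0$. This produces the asserted subpolytope.

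Second, I would conclude that $P_{i_0}$ is completely labeled. One can either appeal directly to Proposition~\ref{prop:Bekker}(iv) applied to $P_{i_0}$, or, more elementarily, inspect the recursive definition \eqref{eqn:defn_index}: if $\phi(V(P_{i_0}))\neq\{1,2,\ldots,n+1\}$, then by the third clause of \eqref{eqn:defn_index} we would have $\ind_{P_{i_0}}(\phi)=0$, contradicting the choice of $i_0$. Hence $\phi(V(P_{i_0}))=\{1,2,\ldots,n+1\}$, i.e., $P_{i_0}$ carries all $n+1$ labels.

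The only point that requires a moment of care is verifying that the hypotheses of Lemma~\ref{lem:sumindices} and Proposition~\ref{prop:Bekker}(iv) are available at the level of the subpolytopes $P_i$: non-degeneracy of $\phi$ on $P$ does not automatically restrict to non-degeneracy on each $P_i$, but for the conclusion we actually need only the purely combinatorial consequence of the index definition, which does not invoke non-degeneracy. Thus there is no genuine obstacle, and the proof is a short chain of citations.
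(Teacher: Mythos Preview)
Your proof is correct and follows exactly the same route as the paper: invoke Lemma~\ref{lem:sumindices} to write $\ind_P(\phi)=\sum_i\ind_{P_i}(\phi)$, pick a summand with nonzero index, and then use Proposition~\ref{prop:Bekker}(iv) (or the index definition directly) to obtain complete labeling. Your additional remark about non-degeneracy on the $P_i$'s not being needed is a nice clarification, but otherwise there is nothing to add.
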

\begin{proof}
Follows immediately from Lemma \ref{lem:sumindices} and Proposition \ref{prop:Bekker}.
\end{proof}

\begin{cor}\label{cor:Bekker}
Assume that $P$ is an $n$-dimensional polytope, and $P=\bigcup_{i\in I}T_i$ is a simplicial decomposition of $P$. If a { non-degenerate} labeling $\phi$   satisfies $\ind_P(\phi)\neq 0$, then there exists a simplex $T_i$ that is completely labeled.
\end{cor}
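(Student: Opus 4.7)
The plan is to derive the corollary as an immediate specialization of Theorem \ref{thm:bekker2} combined with Lemma \ref{lem:simplex}. Since a simplicial decomposition is a particular case of a polytopal decomposition (where each $P_i$ happens to be a simplex $T_i$), the hypotheses of Theorem \ref{thm:bekker2} are satisfied.

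First I would apply Theorem \ref{thm:bekker2} to the decomposition $P=\bigcup_{i\in I} T_i$ with the non-degenerate labeling $\phi$. Since $\ind_P(\phi)\neq 0$, the theorem furnishes some index $i\in I$ such that $\ind_{T_i}(\phi)\neq 0$. Then I would invoke Lemma \ref{lem:simplex}: because $T_i$ is a simplex carrying the (restricted) non-degenerate labeling, its index is $\pm 1$ if $T_i$ is completely labeled and $0$ otherwise. As $\ind_{T_i}(\phi)\neq 0$, the second alternative is excluded, so $T_i$ must be completely labeled.

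The only point worth a moment of care is that non-degeneracy of $\phi$ on $P$ implies non-degeneracy of $\phi$ restricted to each $T_i$ in the sense required by Lemma \ref{lem:simplex}. This is immediate, however: an $(n-1)$-face of the simplex $T_i$ has exactly $n$ vertices, so it cannot possibly carry $n+1$ distinct labels, hence the condition holds trivially for every $T_i$. There is no genuine obstacle in this argument; the work has already been done in Lemma \ref{lem:sumindices}, Proposition \ref{prop:Bekker}, and Theorem \ref{thm:bekker2}, and the corollary is essentially a bookkeeping step that translates \emph{nonzero index of some subpolytope} into \emph{completely labeled simplex} via the characterization of the index for simplices.
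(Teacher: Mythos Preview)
Your proposal is correct and follows essentially the same approach as the paper: the paper's proof is the one-line ``Follows from Lemma~\ref{lem:simplex},'' which implicitly uses Theorem~\ref{thm:bekker2} (stated immediately before) to obtain a simplex with nonzero index and then appeals to Lemma~\ref{lem:simplex} to conclude it is completely labeled. Your version simply spells out these two steps and adds the (trivial) observation about non-degeneracy on the faces of $T_i$.
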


\begin{proof}
Follows from Lemma \ref{lem:simplex}.
\end{proof}

Note that in Corollary \ref{cor:Bekker} the assumption that $P$ is completely labeled alone is not sufficient to ensure that there exists a completely labeled simplex in the decomposition; the condition that $\ind_P(\phi)\neq 0$  is necessary. See Example \ref{ex}, (ii), (iii).

\begin{ex}\label{ex}

(i) Consider the polygon $P$, the simplicial decomposition,   and the labeling shown in Fig. \ref{hexagon}-(a). We have  $\ind_P(\phi)=2$; there exists a completely labeled triangle.

(ii) Consider the polygon $P'$, the simplicial decomposition,    and the labeling shown in Fig. \ref{hexagon}-(b).   We have $\ind_{P'}(\phi)=0$; there is no completely labeled simplex.

(iii) Consider the polyhedron $P''$, the simplicial decomposition,    and the labeling shown in Fig. \ref{hexagon}-(c). We have  $\ind_{P''}(\phi)=0$;  there is no completely labeled simplex.
\begin{figure}
$\begin{array}{ccc}
\includegraphics[width=0.25\textwidth]{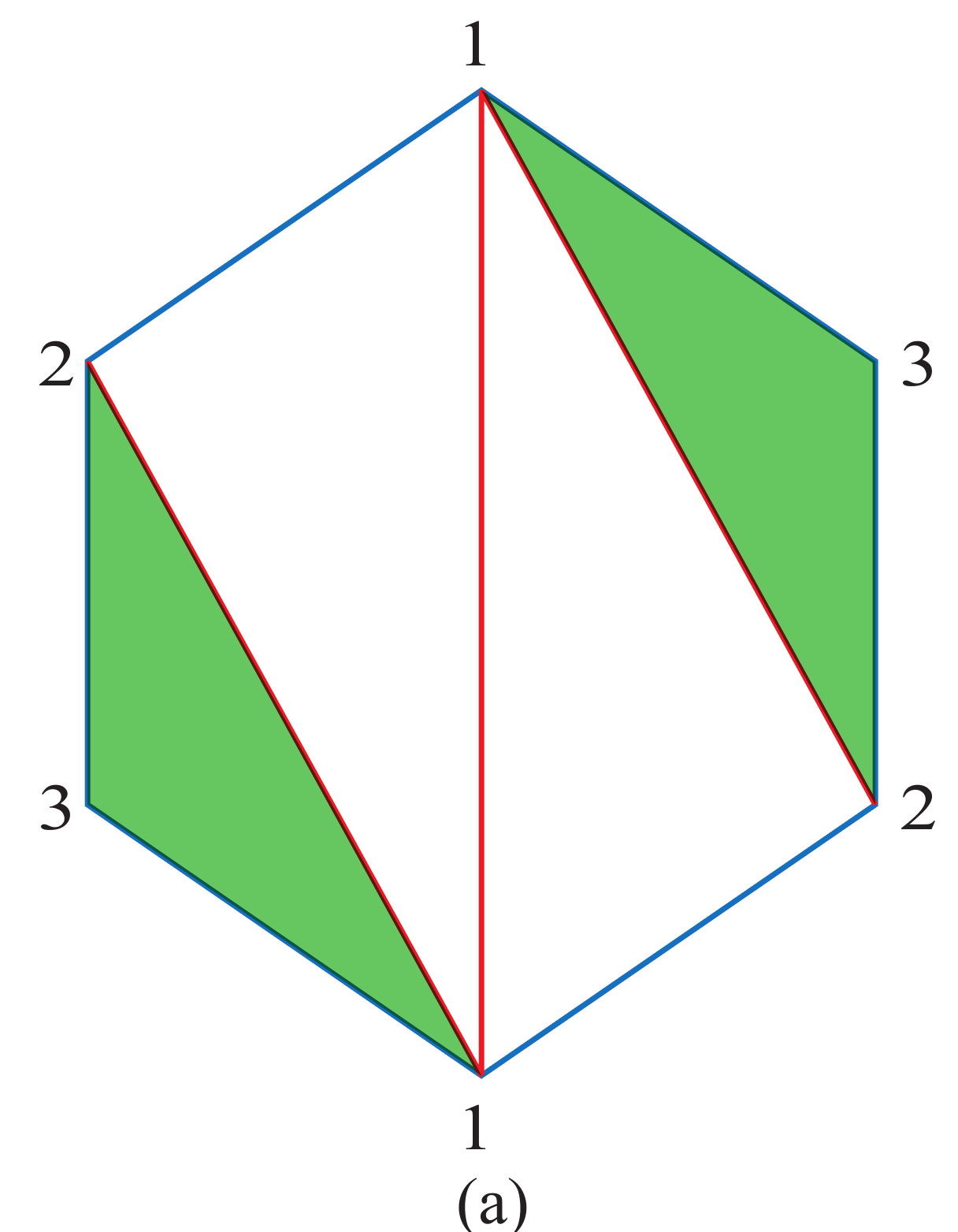}&
\includegraphics[width=0.25\textwidth]{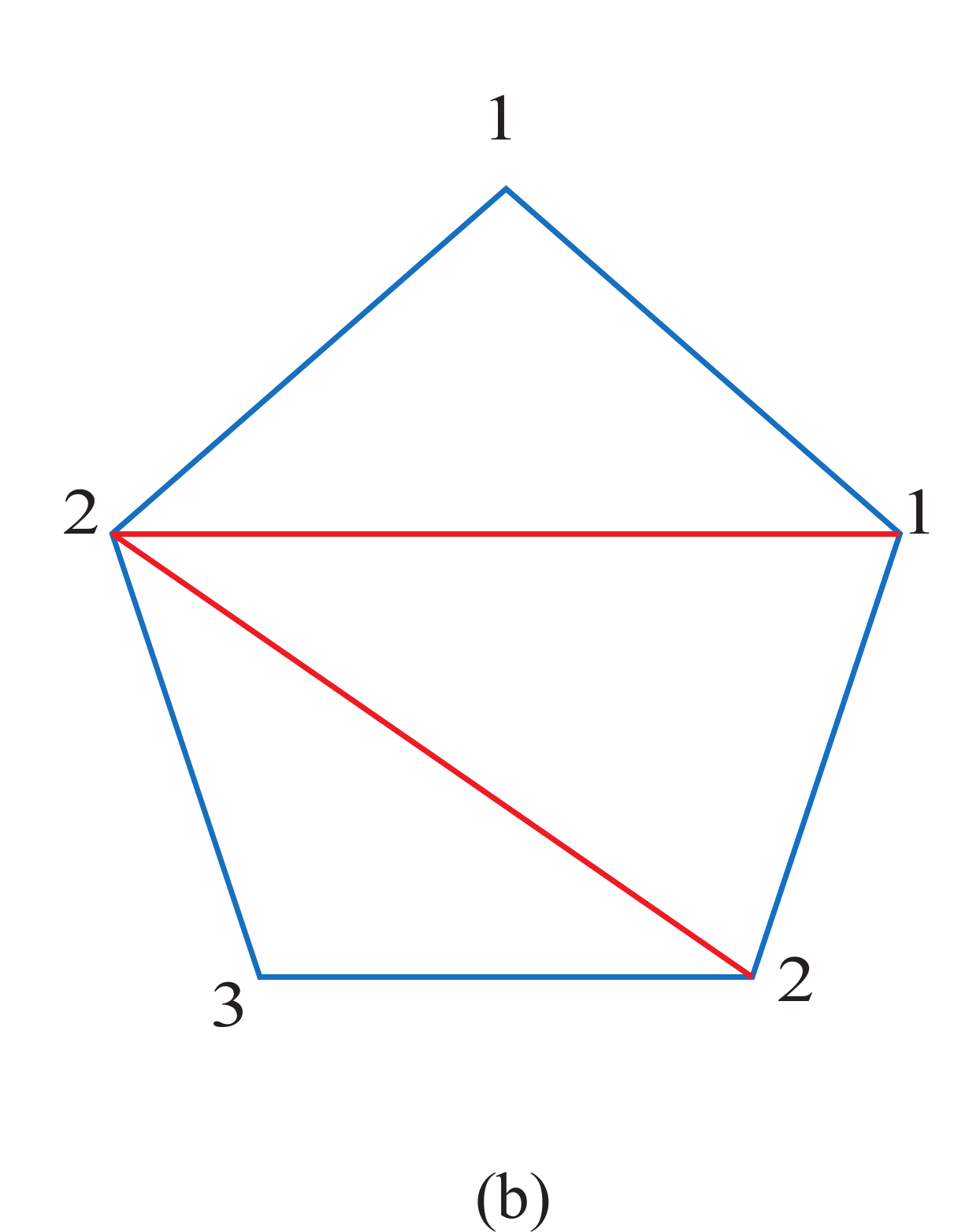}&
\includegraphics[width=0.25\textwidth]{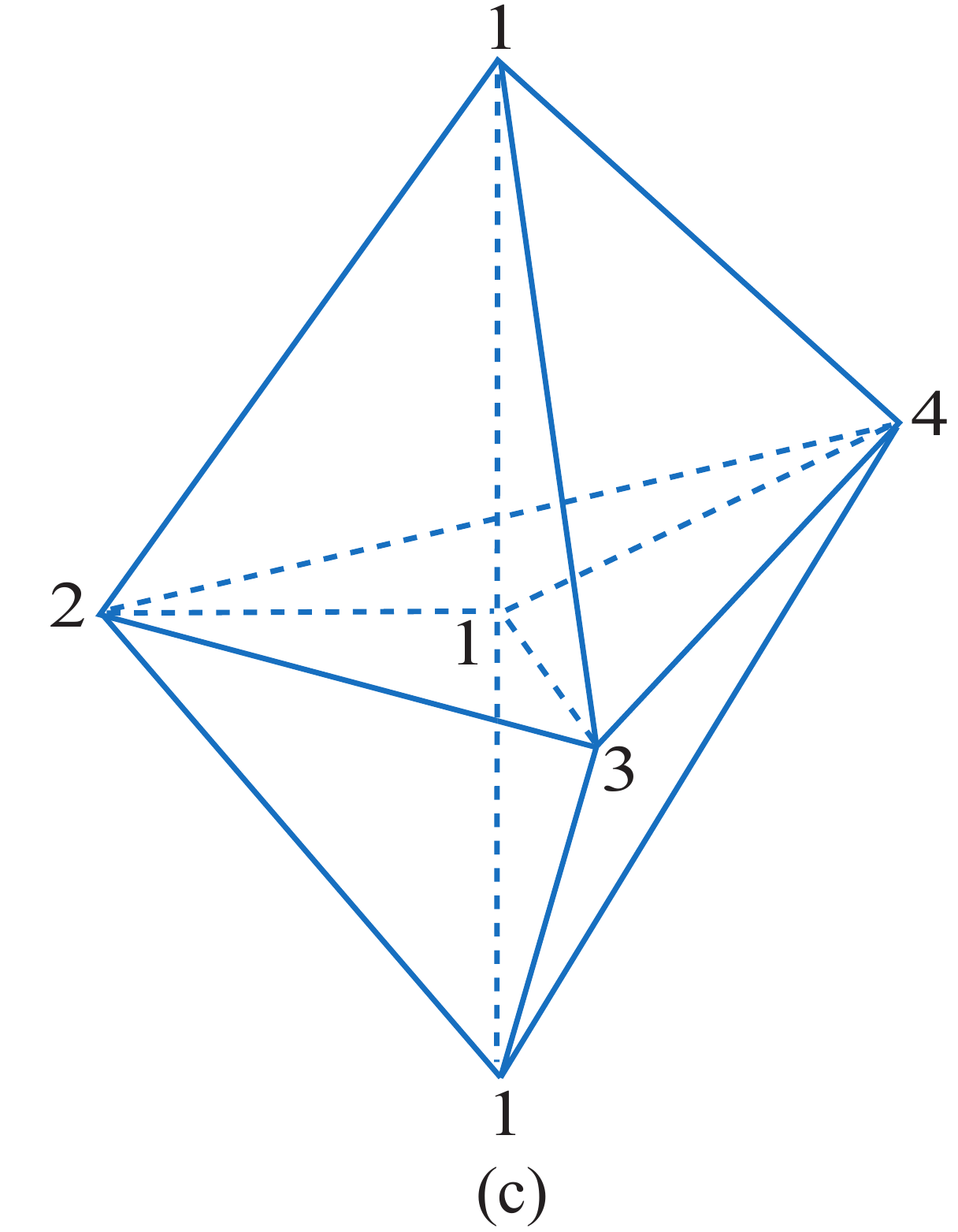}
\end{array}$
\caption{Examples of simplicial decomposition of polytopes and of labelings.}
\label{hexagon}
\end{figure}
\end{ex}

\section{Sperner's Lemma for cubical complexes}\label{sec:sperner_cubical}

In this section we present a new version of the Sperner  Lemma for cubical decompositions. The main difference from the previous sections will be the labeling. For an $n$-dimensional cube and a corresponding cubical decomposition, it will be convenient in Section \ref{sec:windows} to use $2^n$ labels that are $n$-dimensional vectors with coefficients $\pm 1$; whereas, in Section \ref{sec:bekker}, for an $n$-dimensional polytope we have used only $(n+1)$ labels. Hence,  we will have to re-define what a complete labeling means in terms of the new labeling convention, and relate with the old labeling convention.

Consider  vector-labels $\ell\in \{(\pm 1,\ldots, \pm 1)\}=\mathscr{Z}^n$, where we denote $\mathscr{Z}=\{\pm 1\}$.
Each label $\ell=(\pm 1,\ldots, \pm 1)$ corresponds to  a hyperoctant
\[\mathscr{O}_\ell=\{(x_1,x_2,\ldots, x_n)\in  \R^n\,|\, \forall i,\,\ell_i x_i\geq 0 \}.\]
Note that $\bigcup_{\ell\in\mathscr{Z}^n} \mathscr{O}_\ell=\R^n$.

We call a collection of   labels $\{\ell_1,\ell_2,\ldots,\ell_{n+1}\}$  \emph{complete} if $\mathscr{O}_{\ell_1}\cap \mathscr{O}_{\ell_2}\cap\ldots\cap \mathscr{O}_{\ell_{n+1}}=\{{\bf 0}\}$.
Equivalently, $\{\ell_1,\ell_2,\ldots,\ell_{n+1}\}$ is complete if for each coordinate index $i\in\{1,\ldots,n\}$, there exists a pair of labels $\ell_{j}, \ell_{k}$ such that the $i$-th coordinates of $\ell_{j}$ and $\ell_{k}$ have opposite signs, that is, $\pi_i(\ell_j)=-\pi_i(\ell_k)$.

A labeling $\phi$   is said to be \emph{non-degenerate} if no face of $P$  carries a complete set of labels.

A convex  polyhedral cone is a convex cone  in $\R^n$ bounded by a finite collection of hyperplanes of the form $x_k=0$; alternatively, it can be characterized  as an intersection of finitely many half-spaces (e.g., spaces of the form $\{x\in\R^n\,|\,x_k\geq 0\}$ for some $k$); see \cite{Stanley2011}.

A \emph{special  convex  polyhedral cone partition} of $\R^n$ is a collection of $(n+1)$ convex  polyhedral cones $\mathscr{N}_j$, $j=1,\ldots,n+1$, satisfying the following properties:
\begin{itemize}
\item[(a)] $\bigcup _j \mathscr{N}_j=\R^n$;
\item[(b)] $\inte({\mathscr{N}_j})\cap \inte(\mathscr{N}_l)=\emptyset$ for $j\neq l$;
\item[(c)] $\bigcap_j \mathscr{N}_j=\{\bf{0}\}$.
\end{itemize}

\begin{lem}\label{lem:partition}
(i) Given a special  convex  polyhedral cone partition   $\mathscr{N}_1, \ldots, \mathscr{N}_{n+1}$ of   $\R^n$. Any set of $(n+1)$ hyperoctants $\mathscr{O}_{\ell_1},\ldots,\mathscr{O}_{\ell_{n+1}}$, with the property that each $\mathscr{O}_\ell$ is contained in exactly one $\mathscr{N}_j$, and no two $\mathscr{O}_\ell$'s are contained in the same
$\mathscr{N}_i$, satisfies $\mathscr{O}_{\ell_1}\cap \mathscr{O}_{\ell_2}\cap\ldots\cap \mathscr{O}_{\ell_{n+1}}=\{\bf 0\}$.

(ii) Given  a set of $(n+1)$ hyperoctants $\mathscr{O}_{\ell_1},\ldots,\mathscr{O}_{\ell_{n+1}}$ with $\mathscr{O}_{\ell_1}\cap \mathscr{O}_{\ell_2}\cap\ldots\cap \mathscr{O}_{\ell_{n+1}}=\{\bf 0\}$. There exists a special  convex  polyhedral cone partition   $\mathscr{N}_1, \ldots, \mathscr{N}_{n+1}$ of   $\R^n$,  such that each $\mathscr{O}_{\ell_i}$ is contained exactly in one $\mathscr{N}_j$.
\end{lem}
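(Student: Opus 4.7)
The plan is to tackle (i) and (ii) separately. For (i), the conclusion follows directly from property (c) of the partition: since each $\mathscr{O}_{\ell_i}$ is contained in a distinct $\mathscr{N}_{j_i}$, the map $i\mapsto j_i$ is a bijection onto $\{1,\ldots,n+1\}$. Intersecting the inclusions $\mathscr{O}_{\ell_i}\subseteq \mathscr{N}_{j_i}$ over $i$ gives $\bigcap_{i=1}^{n+1}\mathscr{O}_{\ell_i}\subseteq \bigcap_{j=1}^{n+1}\mathscr{N}_j=\{\mathbf{0}\}$. The reverse inclusion is immediate because $\mathbf{0}\in\mathscr{O}_\ell$ for every label $\ell$, so equality holds.

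For (ii), my plan is induction on the dimension $n$. The base case $n=1$ is immediate: the intersection condition forces $\{\ell_1,\ell_2\}=\{+1,-1\}$, so one takes $\mathscr{N}_1=[0,\infty)$ and $\mathscr{N}_2=(-\infty,0]$. For the inductive step, the hypothesis $\bigcap_i\mathscr{O}_{\ell_i}=\{\mathbf{0}\}$ is equivalent to saying that for each coordinate both signs appear among the $\ell_i$; I would choose such a coordinate $k$ and split the labels into $L^+=\{\ell_i:\ell_{i,k}=+1\}$ and $L^-=\{\ell_i:\ell_{i,k}=-1\}$, both non-empty. I would then recursively partition each half-space $\{x_k\geq 0\}$ and $\{x_k\leq 0\}$ using cones involving only the remaining coordinates $\{1,\ldots,n\}\setminus\{k\}$, and glue the two sub-partitions along the hyperplane $\{x_k=0\}$. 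Verifying the three partition properties then reduces to: (a) the covers glue along $\{x_k=0\}$, (b) the interiors are disjoint within each half-space and the two open half-spaces are themselves disjoint, and (c) the $k$-th coordinate vanishes in the intersection by the very split on $k$ while every other coordinate vanishes by one of the two recursive intersections.

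The main obstacle I expect is the combinatorial bookkeeping at the induction step: one must choose a splitting coordinate $k$ together with an apportionment of the remaining coordinates between the two half-spaces so that, on each side, the restricted labels (viewed in $\R^{n-1}$) still have a trivial mutual intersection and the required number of sub-cones matches the number of labels on that side. The intersection hypothesis in $\R^n$ is exactly what supplies enough signed freedom among the labels to guarantee a workable choice; once that combinatorial step is justified, the gluing and the verification of (a), (b), (c) are routine.
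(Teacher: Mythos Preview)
Your argument for part (i) is correct and matches the paper's proof almost verbatim: both simply use the inclusion $\bigcap_i \mathscr{O}_{\ell_i} \subseteq \bigcap_j \mathscr{N}_j = \{\mathbf{0}\}$, together with the trivial reverse inclusion.

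For part (ii) you take a different route from the paper. The paper uses a greedy peeling procedure: it selects $\mathscr{N}_1$ as a maximal intersection of coordinate half-spaces that isolates a single hyperoctant $\mathscr{O}_{\ell_{i_1}}$, then repeats the procedure inside the complement to build $\mathscr{N}_2,\ldots,\mathscr{N}_{n+1}$ one at a time. Your approach instead attempts a single split on one coordinate $k$ followed by recursion on each half-space.

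The gap you flag is real, and in fact your proposed inductive scheme can fail outright. The issue is that the ``apportionment'' of the remaining $n-1$ coordinates between the two sides need not exist for \emph{any} choice of $k$. Concretely, take $n=6$, pair the coordinates as $(1,2),(3,4),(5,6)$, and let the seven labels be the vectors of the form $(a,a,b,b,c,c)$ with $(a,b,c)\in\{\pm 1\}^3\setminus\{(-1,-1,-1)\}$. This collection is complete (each coordinate sees both signs), yet for every choice of splitting coordinate $k$ the paired coordinate $k'$ is constant on $L^{+}$ and constant (of the opposite sign) on $L^{-}$. Hence $k'$ can be placed in neither $S^{+}$ nor $S^{-}$, and the recursion cannot even get started. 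So the assertion that ``the intersection hypothesis \ldots\ supplies enough signed freedom \ldots\ to guarantee a workable choice'' is false for a split along a single hyperplane.

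The paper's greedy construction sidesteps this obstruction because the first cone $\mathscr{N}_1$ may be cut out by \emph{several} coordinate half-spaces at once (in the example above, $\{x_1\leq 0,\,x_3\leq 0\}$ already isolates the label $(-,-,-,-,+,+)$), rather than by a single hyperplane $\{x_k=0\}$. If you wish to rescue the inductive approach, you would need either to allow splitting by an intersection of half-spaces at each step, or to first collapse coordinates that carry identical (or opposite) sign patterns across the given labels and then argue in the reduced dimension.
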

\begin{proof}
(i) We have  $\mathscr{O}_{\ell_1}\cap \mathscr{O}_{\ell_2}\cap\ldots\cap \mathscr{O}_{\ell_{n+1}}\subseteq \mathscr{N}_1\cap \ldots \cap \mathscr{N}_{n+1}=\{\bf 0\}$.

(ii) Consider a  set of  hyperoctants $\mathscr{O}_{\ell_1},\ldots,\mathscr{O}_{\ell_{n+1}}$ with $\mathscr{O}_{\ell_1}\cap \mathscr{O}_{\ell_2}\cap\ldots\cap \mathscr{O}_{\ell_{n+1}}=\{\bf 0\}$. Each hyperplane will separate the set of hyperoctants into two non-empty collections on each side of the hyperplane (since having all hyperoctants on the same side of a hyperplane would imply that their intersections is more than the zero vector).  Thus, cutting $\R^n$ by the $n$ hyper-planes will imply that each pair of hyperoctants is on opposite sides of some hyperplane.
First,  select $\mathscr{N}_1$ as the largest  intersection of half-spaces in $\R^n$   (i.e., an intersection by the minimum number of half-spaces) that contains only one hyperoctant $\mathscr{O}_{\ell_{i_1}}$. Then select $\mathscr{N}_2$ as the largest  intersection set of half-spaces in $\R^n\setminus \mathscr{N}_1$  that contains only one hyperoctant $\mathscr{O}_{\ell_{i_2}}$, with $i_1\neq i_1$. Continue this procedure up to the last hyperoctant $\mathscr{O}_{\ell_{i_{n+1}}}$ in the given collection, which will provide $\mathscr{N}_{n+1}$.
\end{proof}

The following example shows some simple changes of labels:
\begin{ex}\label{ex:relabeling} Consider the following   special partition into convex polyhedral cones:
\begin{equation*}\begin{split}
\mathscr{N}_1&=\{x\in\R^n\,|\,x_1\geq 0\},\\
\mathscr{N}_2&=\{x\in\R^n\,|\,x_1\leq 0, x_2\geq 0\},\\
&\cdots \\
\mathscr{N}_{n}&=\{x\in\R^n\,|\,x_1\leq 0, \ldots,x_{n-1}\leq 0,x_{n}\geq 0\},\\
 \mathscr{N}_{n+1}&=\{x\in\R^n\,|\,x_1\leq 0, \ldots,x_{n-1}\leq 0,x_{n}\leq 0\}.
\end{split}\end{equation*}

The corresponding  change of labels $\psi$  from   vector-labels $\ell\in \mathscr{Z}^n$ to   labels $j \in \{1,2,\ldots,
n + 1\}$ is given by
\begin{equation*}\begin{split}\psi(1,\pm 1, \ldots, \pm 1) &= 1,\\ \psi(-1,1, \pm 1, \ldots, \pm 1) &= 2,\\
&\cdots \\
\psi(-1, \ldots, -1, 1,\pm 1, \ldots,\pm 1) &= \min\{j  |\, \ell_j = 1\},\\&\cdots \\ \psi(-1,-1, \ldots, -1) &= n+1.\end{split}\end{equation*}
\end{ex}

Consider a labeling $\phi:P\to \mathscr{Z}^n$ of an $n$-dimensional polytope $P$.
Let $\psi:\mathscr{Z}^n\to\{1,\ldots,n+1\}$ be a change of labels. We can define a re-labeling $\psi\circ\phi:P\to \{1,\ldots,n+1\}$. This re-labeling is as  in Section \ref{sec:bekker},   hence all the results from that section can be applied in this context. In particular, any re-labeling $\psi\circ\phi: P\to \{1,\ldots,n+1\}$ admits a realization.

\begin{lem}\label{lem:index_cubical} Let $P$ be a polytope,  $\phi:P\to\mathscr{Z}^n$  a labeling,  $\psi_1,\psi_2:\mathscr{Z}^n\to\{1,\ldots,n+1\}$   two changes of labels, and  $\psi_1\circ\phi,\psi_2\circ\phi:P\to\{1,\ldots,n+1\}$  the corresponding re-labelings.  Then \[\ind_{P} (\psi_1\circ\phi)=\pm\ind_{P} (\psi_2\circ\phi).\] \end{lem}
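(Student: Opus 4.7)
The plan is to identify $|\ind_P(\psi\circ\phi)|$ with the absolute Brouwer degree of a realization, and then factor the realization through an intermediate cube so that the dependence on $\psi$ is isolated in a map whose degree is always $\pm 1$.

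By Proposition \ref{prop:Bekker}(iii), for any realization $\Phi_k:(P,\partial P)\to(T,\partial T)$ of $\psi_k\circ\phi$ one has $\ind_P(\psi_k\circ\phi)=\pm\deg(\Phi_k)$, so it is enough to prove $|\deg(\Phi_1)|=|\deg(\Phi_2)|$. Since $\phi$ takes values in $\mathscr{Z}^n$, the vertex set of the cube $I^n=[-1,1]^n$, I would first construct a \emph{vector realization} $F:(P,\partial P)\to (I^n,\partial I^n)$ of $\phi$: a continuous map with $F(v)=\phi(v)$ on each vertex $v$ of $P$, and $F(S)\subseteq \textrm{conv}(\phi(v_1),\ldots,\phi(v_k))$ for every face $S$ of $P$ with vertices $v_1,\ldots,v_k$. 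The construction mirrors the proof of Proposition \ref{prop:Bekker}(i): extend cell-by-cell, using the non-degeneracy of $\phi$ at each step to guarantee that the labels on $\partial S$ span a proper subset of $\mathscr{Z}^n$, hence determine a cell in $\partial I^n$.

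Next, for each change of labels $\psi_k$ associated with a special polyhedral cone partition $\{\mathscr{N}^k_j\}_{j=1}^{n+1}$ of $\R^n$, I would build a continuous map $\Psi_k:(I^n,\partial I^n)\to(T,\partial T)$ with $\Psi_k(\ell)=a_{\psi_k(\ell)}$ at every vertex $\ell\in\mathscr{Z}^n$, sending each face of $I^n$ into the convex hull of the images of its vertices. One concrete choice is
\[
\Psi_k(x)=\sum_{j=1}^{n+1} w^k_j(x)\,a_j,
\]
for a continuous partition of unity $\{w^k_j\}$ subordinate to a neighborhood family of $\{\mathscr{N}^k_j\}$ inside $I^n$. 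Then $\Phi_k:=\Psi_k\circ F$ is a realization of $\psi_k\circ\phi$, and by multiplicativity of the Brouwer degree,
\[
\deg(\Phi_k)=\deg(\Psi_k)\cdot\deg(F).
\]

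The main obstacle is to prove that $|\deg(\Psi_k)|=1$ for every such $\Psi_k$. This is precisely where the special partition property $\bigcap_j\mathscr{N}^k_j=\{\mathbf{0}\}$ enters: it forces the barycenter of $T$ (or any nearby regular value) to have a single preimage under $\Psi_k$, situated near the origin of $I^n$ where all $\mathscr{N}^k_j$ meet, and one checks that the local degree at that point is $\pm 1$. Equivalently, on the boundary $\partial I^n\cong S^{n-1}$ the map $\Psi_k$ covers each facet of $\partial T\cong S^{n-1}$ exactly once, with signs determined by the chosen orientation of the partition. Granted $|\deg(\Psi_k)|=1$ for both $k=1,2$, the factorization gives $|\deg(\Phi_1)|=|\deg(F)|=|\deg(\Phi_2)|$, whence $|\ind_P(\psi_1\circ\phi)|=|\ind_P(\psi_2\circ\phi)|$, which is the claim.
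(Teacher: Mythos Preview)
Your factorization through the cube $I^n=[-1,1]^n$ is a genuinely different route from the paper's. The paper argues directly that any two special cone partitions $\{\mathscr{N}^1_j\}$ and $\{\mathscr{N}^2_j\}$ are related by a rotation, a reflection, and a homotopy of $\R^n$, and that these operations preserve the degree of the associated realizations up to sign; the passage from this geometric claim to $\deg(\Phi_1)=\pm\deg(\Phi_2)$ is left essentially implicit. Your approach is more structural: the common factor $\deg(F)$ absorbs all the information about $\phi$, and the $\psi$-dependence is isolated in a single factor $\deg(\Psi_k)=\pm 1$. Your sketch for $|\deg(\Psi_k)|=1$ is essentially right; a clean alternative is to observe, from $\bigcap_j\mathscr{N}^k_j=\{0\}$, that the labeling $\psi_k$ of the polytope $I^n$ is itself non-degenerate in the sense of Section~\ref{sec:bekker}, and then compute $\ind_{I^n}(\psi_k)=\pm 1$ directly from~\eqref{eqn:defn_index}.

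There is, however, a real gap at the sentence ``Then $\Phi_k:=\Psi_k\circ F$ is a realization of $\psi_k\circ\phi$.'' The realization condition requires $\Phi_k(S)\subset\textrm{conv}\bigl(a_{\psi_k(\phi(v))}:v\in V(S)\bigr)$ for every face $S$ of $P$. Now $F(S)$ lies in $\textrm{conv}\bigl(\phi(v):v\in V(S)\bigr)$, but this convex hull is in general only contained in some \emph{facet} of $I^n$, and on that facet your partition-of-unity $\Psi_k$ may pick up weights $w^k_j$ for indices $j$ not among $\{\psi_k(\phi(v)):v\in V(S)\}$. For instance, in $\R^3$, if the labels on $S$ are $(+,+,+)$ and $(+,-,-)$, the segment joining them sits in the facet $\{x_1=1\}$ of $I^3$ and passes near cones $\mathscr{N}^k_j$ containing the octants $(+,+,-)$ or $(+,-,+)$; their barycentric coordinates then contaminate $\Psi_k\circ F(S)$ with extra vertices of~$T$.

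The repair is not to insist that $\Psi_k\circ F$ be a realization, but to show it is \emph{homotopic as a map of pairs} $(P,\partial P)\to(T,\partial T)$ to a genuine realization $\Phi_k$ (which exists by Proposition~\ref{prop:Bekker}(i)). On each face $S\subset\partial P$, both $\Phi_k$ and $\Psi_k\circ F$ land in a common proper face of $T$---namely the one missing $a_{j_0}$ for any $j_0$ with $\mathscr{N}^k_{j_0}$ contained in the half-space opposite to the facet of $I^n$ carrying $F(S)$---and they agree on vertices; a skeleton-by-skeleton extension then produces the homotopy. With that in place, multiplicativity gives $\deg(\Phi_k)=\deg(\Psi_k)\deg(F)$ and your argument concludes as intended.
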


\begin{proof} Let  $\Phi_i:P\to T$ be a realization of $\psi_i\circ\phi$, $i=1,2$.
By  Proposition~\ref{prop:Bekker}, $\ind_{P}(\psi_i\circ\phi)=\pm\deg(\Phi_i)$, for $i=1,2$.
Each change of labels $\psi_i$ corresponds to a special partition into convex polyhedral cones $\{\mathscr{N}^i_j\}_j$.
Either partition can be obtained from the other via a composition of a rotation, a reflection, and a homotopy.
These transformation preserve the Brouwer degree up to a sign.
Hence $\deg(\Phi_1)=\pm\deg(\Phi_2)$,  thus $\ind_{P} (\psi_1\circ\phi)=\pm\ind_{P} (\psi_2\circ\phi)$.
\end{proof}

By Lemma \ref{lem:index_cubical}, we can define the index of a labeling $\phi:P\to\mathscr{Z}^n$, up to a sign, by $\ind_{P}(\phi)=\pm\ind_{P} (\psi \circ\phi)$, where   $\psi$ is a change of labels.

Now, let us consider $C$  and a labeling  $\phi:C\to\mathscr{Z}^n$ of $C$.   Let $\{C_i\}_i$   be a cubical decomposition of $C$.
Then $C$ can be regarded as a polytope  by appending to the vertices of $C$ all the
vertices of the the $C_i$'s lying on the faces of $C$, as well as appending to the faces of
$C$ all the faces of the $C_i$'s lying on the faces of $C$.
We denote this polytope by $\tilde C$.
For $\tilde C$ we have a polytope (cubical) decomposition $\tilde C=\bigcup_i C_i$. The labeling $\phi:C\to\mathscr{Z}^n$ of $C$ can be viewed as a labeling $\phi:\tilde C\to\mathscr{Z}^n$ of $\tilde C$.

The following is a version of Sperner's Lemma for cubical decompositions.

\begin{thm}[Sperner's Lemma]\label{thm:sperner_cubical} Let $C=\bigcup_i C_i$ be  a cube together with a cubical decomposition, $\tilde C$ the corresponding polytope, and $\phi:\tilde C\to\mathscr{Z}^n$   a  { non-degenerate} labeling of $\tilde C$.
If $\ind_{\tilde C}(\phi)\neq 0$ then there exists at least one cube $C_i$ such that $\ind_{C_i}(\phi)\neq 0$, hence completely labeled relative to $\phi$.
\end{thm}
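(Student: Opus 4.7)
The plan is to reduce the cubical labeling situation to the polytope labeling situation from Section~\ref{sec:bekker} via an appropriate change of labels, and then apply Theorem~\ref{thm:bekker2} essentially off the shelf. The work lies in verifying that non-degeneracy transfers correctly across the relabeling, and in translating ``completely labeled'' between the two conventions.

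First I would fix a change of labels $\psi:\mathscr{Z}^n\to\{1,\ldots,n+1\}$ arising from a special convex polyhedral cone partition $\{\mathscr{N}_j\}_{j=1}^{n+1}$ of $\R^n$ (for instance, the explicit one displayed in Example~\ref{ex:relabeling}), and consider the induced labeling $\psi\circ\phi:\tilde C\to\{1,\ldots,n+1\}$ in the sense of Section~\ref{sec:bekker}. By Lemma~\ref{lem:index_cubical}, $\ind_{\tilde C}(\psi\circ\phi)=\pm\ind_{\tilde C}(\phi)\neq 0$, so the hypothesis of Theorem~\ref{thm:bekker2} on the index is in force for $\psi\circ\phi$.

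Next I would verify that $\psi\circ\phi$ is a non-degenerate labeling in the polytope sense, i.e.\ that no $(n-1)$-face of $\tilde C$ carries all $n+1$ labels $\{1,\ldots,n+1\}$. Suppose for contradiction that some $(n-1)$-face $S$ of $\tilde C$ contains vertices $v_1,\ldots,v_{n+1}$ with $\psi(\phi(v_i))=i$. Then each hyperoctant $\mathscr{O}_{\phi(v_i)}$ is contained in the corresponding cone $\mathscr{N}_i$, and since no two of the $\phi(v_i)$ share a cone, Lemma~\ref{lem:partition}(i) gives $\bigcap_{i=1}^{n+1}\mathscr{O}_{\phi(v_i)}=\{\mathbf 0\}$. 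Thus $\{\phi(v_1),\ldots,\phi(v_{n+1})\}$ is a complete set of vector-labels on the face $S$, contradicting the non-degeneracy of $\phi:\tilde C\to\mathscr{Z}^n$.

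Having non-degeneracy, Theorem~\ref{thm:bekker2} applied to $\tilde C=\bigcup_i C_i$ with labeling $\psi\circ\phi$ produces a cube $C_i$ with $\ind_{C_i}(\psi\circ\phi)\neq 0$. Invoking Lemma~\ref{lem:index_cubical} in the other direction then yields $\ind_{C_i}(\phi)\neq 0$. Finally, to conclude that $C_i$ is completely labeled relative to $\phi$ in the cubical sense, I would note that $\ind_{C_i}(\psi\circ\phi)\neq 0$ together with Proposition~\ref{prop:Bekker}(iv) implies that $C_i$ carries all polytope-labels $\{1,\ldots,n+1\}$; picking vertices $w_1,\ldots,w_{n+1}$ of $C_i$ with $\psi(\phi(w_i))=i$ and repeating the argument above (one cone per label, then Lemma~\ref{lem:partition}(i)) shows that $\bigcap_i \mathscr{O}_{\phi(w_i)}=\{\mathbf 0\}$, which is exactly the definition of complete labeling for a $\mathscr{Z}^n$-valued labeling.

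The only real subtlety is the transfer of non-degeneracy. Because a single cone $\mathscr{N}_j$ typically contains several hyperoctants, a priori a face carrying all $n+1$ polytope-labels could correspond to fewer than $n+1$ distinct vector-labels on that face; the argument above handles this cleanly precisely because distinct polytope-labels force the underlying hyperoctants to live in distinct cones, which is what Lemma~\ref{lem:partition}(i) needs. Once this point is settled, the rest is bookkeeping between the two labeling conventions.
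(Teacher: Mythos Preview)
Your proposal is correct and follows essentially the same route as the paper: relabel via some $\psi$, invoke Theorem~\ref{thm:bekker2}, and translate back using Lemma~\ref{lem:partition}. If anything, you are more careful than the paper, which applies Theorem~\ref{thm:bekker2} without explicitly checking that non-degeneracy of $\phi$ in the $\mathscr{Z}^n$ sense implies non-degeneracy of $\psi\circ\phi$ in the $\{1,\ldots,n+1\}$ sense; your argument via Lemma~\ref{lem:partition}(i) fills that gap cleanly.
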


\begin{proof} Consider a re-labeling $\psi\circ\phi$ of $\tilde C$. By definition, we have $\ind_{\tilde C}(\phi)=\pm\ind_{\tilde C}(\psi\circ\phi)  \neq 0$. By Theorem \ref{thm:bekker2}, there exists a  cube $C_i$ such that $\ind_{C_i}(\psi\circ\phi)\neq 0$, and so $C_i$ is completely labeled relative to $\psi\circ\phi$. Hence $\ind_{C_i}(\phi)\neq 0$. Lemma \ref{lem:partition} says that a labeling is  complete   relative to $\psi\circ\phi$ if and only if  it is completely labeled relative to~$\phi$.
\end{proof}

\begin{cor}\label{cor:sperner_cubical_1}
Let $C=\bigcup_i C_i$ be  a cube together with a cubical decomposition, and $\phi:C\to\mathscr{Z}^n$   a  { non-degenerate} labeling of $C$.
If $\ind_{C}(\phi)\neq 0$ then there exists at least one cube $C_i$ such that $\ind_{C_i}(\phi)\neq 0$, hence completely labeled relative to $\phi$.
\end{cor}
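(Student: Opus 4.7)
The plan is to deduce the corollary from Theorem \ref{thm:sperner_cubical}. The only subtlety is that Theorem \ref{thm:sperner_cubical} is stated in terms of the polytope $\tilde C$ obtained by enlarging the vertex and face structure of $C$ with all boundary vertices and faces of the $C_i$'s, while the corollary is phrased directly for the cube $C$ itself. As a point set $C = \tilde C$, and both carry the same labeling $\phi$; however, $\ind_C(\phi)$ is computed from only the $2^n$ corners of $C$ via the recursive formula \eqref{eqn:defn_index}, whereas $\ind_{\tilde C}(\phi)$ involves all the refined boundary vertices. The heart of the proof is to establish $\ind_C(\phi) = \pm \ind_{\tilde C}(\phi)$, after which Theorem \ref{thm:sperner_cubical} immediately yields a cube $C_i$ with $\ind_{C_i}(\phi)\neq 0$; complete labeling of $C_i$ relative to $\phi$ then follows from Lemma \ref{lem:partition} and Proposition \ref{prop:Bekker}(iv), exactly as at the end of the proof of Theorem \ref{thm:sperner_cubical}.

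To establish $\ind_C(\phi) = \pm\ind_{\tilde C}(\phi)$, I would argue by induction on $n = \dim C$. The base $n=1$ is immediate, since a $1$-cube has only two vertices and both indices depend only on the labels at the endpoints. For the inductive step, the recursive definition \eqref{eqn:defn_index} expresses
\[
\ind_C(\phi) = \sum_{S \in \mathscr{F}_{n-1}(C)} \ind_S(\phi),
\qquad
\ind_{\tilde C}(\phi) = \sum_{\tilde S \in \mathscr{F}_{n-1}(\tilde C)} \ind_{\tilde S}(\phi),
\]
each summand counted with the orientation induced from the ambient polytope. Every $(n-1)$-face $S$ of $C$ is itself an $(n-1)$-cube that is subdivided by the restriction of the cubical decomposition into the refined faces $\tilde S^j$ of $\tilde C$ lying on $S$. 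Applying the inductive hypothesis to $S$ together with Lemma \ref{lem:sumindices} yields $\ind_S(\phi) = \sum_j \ind_{\tilde S^j}(\phi)$. Summing over all $(n-1)$-faces of $C$ and reorganizing the double sum on the right as a single sum over $\mathscr{F}_{n-1}(\tilde C)$ produces the required equality.

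The main obstacle is the bookkeeping of orientations. The recursion attaches a sign to every face, Lemma \ref{lem:index_cubical} only guarantees equality up to sign, and one must check that the orientation induced on a refined subface $\tilde S^j$ from the ambient $\tilde C$ agrees with the orientation it inherits by first restricting to $S$ (with orientation from $C$) and then refining. This compatibility holds because the cubical decomposition is a refinement of $C$ preserving the underlying affine structure, so induced boundary orientations transport unambiguously from the coarse structure to the refined one. Once this compatibility is in place, the induction closes cleanly and the corollary follows from Theorem \ref{thm:sperner_cubical}.
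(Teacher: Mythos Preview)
Your proposal is correct and follows the same route as the paper: establish $\ind_C(\phi)=\ind_{\tilde C}(\phi)$ and then invoke Theorem~\ref{thm:sperner_cubical}. The paper asserts this equality in one line without further argument; your inductive justification is valid, though it is more than is needed --- a single application of Lemma~\ref{lem:sumindices} to $C$ with the decomposition $\{C_i\}$ (after fixing a relabeling $\psi$) already gives $\ind_C(\psi\circ\phi)=\sum_i\ind_{C_i}(\psi\circ\phi)=\ind_{\tilde C}(\psi\circ\phi)$ directly, so the separate induction (which essentially reproves Lemma~\ref{lem:sumindices}) can be dropped.
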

\begin{proof}
If the labeling $\phi$ of $C$ is { non-degenerate}, and $\tilde C$ is the polytope obtained from the cubical decomposition $C=\bigcup_iC_i$, it follows that $\ind_{\tilde C}(\phi)=\ind_{C}(\phi)\neq 0$. Thus Theorem \ref{thm:sperner_cubical} applies.
\end{proof}

These last two results are the most important for Section \ref{sec:windows}.
In a  nutshell, they say that given a cubical complex with a non-degenerate, non-zero index labeling,
any finer decomposition of the complex into smaller cubes will always have a small cube with non-zero index.

\section{Correctly aligned windows and detection of fixed points/periodic orbits/orbits with prescribed itineraries}\label{sec:windows}
In this section we present the definitions of windows and of correct alignment following \cite{GideaZ2004}, with a few minor modifications. Then we associate some labeling to the windows, and characterize correct alignment in terms of that labeling. We also use the labeling to find numerical approximations of fixed points, periodic orbits, and orbits with prescribed itineraries.

\subsection{Approximate fixed points and periodic orbits}
Given $\delta>0$ and a map $g:\R^n\to \R^n$, we call a point $z=(x_1,\ldots,x_n)\in \R^n$ a \emph{$\delta$-approximate fixed point}  of $g$ if $\|g(z)-z\|_\infty<\delta$,  where $\|z\|_\infty=\max_{i=1,\ldots,n}|x_i|$.

We remark here that   there may be no `true' fixed point near a  $\delta$-approximate fixed point, that is,  $\delta$-approximate fixed points can be  `fake' fixed points. Obviously, any `true' fixed point is a \emph{$\delta$-approximate fixed point} for any $\delta>0$.

Similarly, a finite collection of points $z_1,\ldots, z_k$ is a \emph{$\delta$-approximate periodic orbit}  of $g$
if $\|g(z_j)-z_{j+1}\|_\infty<\delta$, for $j=1,\ldots,k$, and $\|g(z_k)-z_{1}\|_\infty<\delta$.

As in the case of fixed points, there may be no `true' periodic orbits near a  $\delta$-approximate periodic orbit.

\subsection{Windows}
We consider  a discrete dynamical system given by a homeomorphism $f:\R^n\to\R^n$.

We define an equivalence relation on the set of homeomorphisms $\chi:\R^n\to\R^n$ by setting $\chi_1\sim\chi_2$ if there exists an open neighborhood $U$ of $[0,1]^n$ in $\R^n$ such that $(\chi_1)_{\mid U}=(\chi_2)_{\mid U}$.  We will use the same notation for an  equivalence class as for a representative of that class.

\begin{defn} A window in $\R^n$ consists of:\begin{itemize} \item[(i)] a homeomorphic copy $D$ of a multidimensional rectangle $[0,1]^n$,
\item[(ii)] an equivalence class of  homeomorphisms $\chi_{D}:\R^n\to  \R^n$ with $\chi_D([0,1]^n)=D$,
\item[(iii)] a choice of stable- and unstable-like dimensions,  $n_s,n_u\geq 0$, respectively,  with $n_s+n_u=n$,
\item[(iv)] a choice of an `exit set' $D^-\subset \bd (D)$ and of an `entry set' $D^+\subset \bd(D)$, given by
\begin{equation*}
\begin{split}
D^-&=\chi_D\left(\partial[0,1]^{n_u} \times[0,1]^{n_s}\right),\\
D^+&=\chi_D\left([0,1]^{n_u}\times\partial[0,1]^{n_s} \right).
\end{split}
\end{equation*}
\end{itemize}
\end{defn}

We will write $[0,1]^n=[0,1]^{n_u}\times [0,1]^{n_s}$.
Given $D_1,D_2$  two windows in $\R^n$, and $\chi_{D_1},\chi_{D_2}$   two representatives of the corresponding equivalence classes of homeomorphisms, we
denote by  $f_{\chi_{D_1},\chi_{D_2}}:\R^n\to\R^n$ the homeomorphism $f_{\chi_{D_1},\chi_{D_2}}:=\chi_{D_2}^{-1}\circ f \circ \chi_{D_1}$. When there is no risk of ambiguity, we use the simplified notation $f_\chi:= f_{\chi_{D_1},\chi_{D_2}}$.

Denote by $\Upsilon :=\{(x,y)\in \R^{n_u}\times \R^{n_s}\,|\,x\not\in [0,1]^{n_u}\}$.
Given two windows $D_1$ and $D_2$ such that $f_\chi ([0,1]^n)\subseteq  \Upsilon  \cup \left( [0,1]^{n_u}\times (0,1)^{n_s}\right)$,  there exists another homeomorphism $\chi'_{D_2}$ from the equivalence class of homeomorphisms associated to  $D_2$, such that $f_{\chi'} ([0,1]^n )\subset \R^{n_u}\times (0,1)^{n_s}$, where $f_{\chi'}:={\chi'_{D_2}}^{-1}\circ f \circ  \chi_{D_1}$. Note that changing $\chi_{D_2}$ with $\chi'_{D_2}$ has no effect on the windows $D_1$ and $D_2$.

\subsection{Correctly aligned windows in two-dimensions} \label{sec:2D_windows}
We first give a definition of correct alignment of windows in the  $2$-dimensional case, that is,  $n=2$.

\begin{defn}\label{defn:win2d} We say that the window $D_1$ is correctly aligned with $D_2$ under $f$ if there exist  corresponding homeomorphisms $\chi_{D_1},\chi_{D_2}$ with the following properties:
\begin{itemize}
\item[(i)] Case $n_u=1$, $n_s=1$:
\begin{itemize}\item[(i.a)] $f_{\chi} ([0,1]^2 )\subset \R\times(0,1)$;
\item [(i.b)]  $f_{\chi} (\{0\}\times [0,1]  ) \subset
(-\infty,0)\times \R$ and $f_{\chi} (\{1\}\times [0,1]  )\subset
(1, +\infty) \times \R$, or, $f_{\chi} (\{0\}\times [0,1]  )\subset
(1, +\infty) \times  \R$ and $f_{\chi} (\{1\}\times [0,1]  )\subset
(-\infty, 0) \times  \R$;
 \end{itemize}
\item[(ii)]
Case  $n_u=0$, $n_s=2$:
  $f_{\chi}( [0,1]^2 )\subset
(0,1)^2$;
\item[(iii)] Case  $n_u=2$, $n_s=0$:
 $f^{-1}_{\chi} ([0,1]^2) \subset (0,1)^2$.
\end{itemize}
\end{defn}

\begin{rem}
As noted above, instead of condition (i.a) we can require the more general condition that $f_{\chi} ([0,1]^2 )\subset \Upsilon  \cup \left( [0,1]^{n_u}\times (0,1)^{n_s}\right)$.
\end{rem}

Let $g:\R^2\to\R^2$ be a homeomorphism.
Let $(x_1,x_2)$ denote the coordinates of a point $z\in [0,1]^2$ and $(x'_1,x'_2)$ the coordinates  of $g(z)$. Let $\Delta z:=(\Delta x_1,\Delta x_2)=(x_1'-x_1,x'_2-x_2)$.
Denote the quadrants \begin{equation*}\begin{split}\mathscr{O}_{(1,1)}&=\{(x_1,x_2)\,|\,  x_1\geq 0\textrm{ and } x_2\geq 0\},\\  \mathscr{O}_{(-1,1)}&=\{(x_1,x_2)\,|\, x_1\leq 0\textrm{ and } x_2\geq 0\},\\   \mathscr{O}_{(-1,-1)}&=\{(x_1,x_2)\,|\,  x_1\leq 0\textrm{ and } x_2\leq 0\}, \\   \mathscr{O}_{(1,-1)}&=\{(x_1,x_2)\,|\,  x_1\geq 0\textrm{ and } x_2\leq 0\}.\end{split}\end{equation*}
These are closed sets which cover $\R^2$, and $\mathscr{O}_{\ell_1}\cap \mathscr{O}_{\ell_2}\cap \mathscr{O}_{\ell_3}=\{(0,0)\}$ whenever $\ell_1,\ell_2,\ell_3$ are all different. With respect to the mapping $g$, to each point $z\in \R^2$ we assign a vector label $(\pm 1,\pm 1)\in\mathscr{Z}^2$ according to the following:

\begin{description}
\item[Condition O]{$ $}

\begin{itemize}\item if $\Delta z\in\inte (\mathscr{O}_\ell)$ then we assign to $z$ the label  $\ell$;
\item if  $\Delta z\in \inte(\mathscr{O}_{\ell_1}\cap \mathscr{O}_{\ell_2})$  then we  assign to $z$ either label $\ell_1$ or $\ell_2$;
\item if $\Delta z=0$,  we assign to $z$ either label $\ell$.
\end{itemize}
\end{description}

A square $C\subset \R^2$ labeled according to \emph{Condition O} is said to be \emph{completely labeled} if its   vertices contain three different labels $\ell_1,\ell_2,\ell_3$,  in which case $\mathscr{O}_{\ell_1}\cap \mathscr{O}_{\ell_2}\cap \mathscr{O}_{\ell_3}=0$.
%{\color{blue}(We still require the extra condition, for $C$ to be completely labeled,  that $\textrm{ind}_C(\phi) \neq 0$.)}
Note that if $\Delta z\in \mathscr{O}_{\ell_1}\cap \mathscr{O}_{\ell_2}\cap \mathscr{O}_{\ell_3}$ with  $\ell_1,\ell_2,\ell_3$  mutually distinct, then $\Delta z=0$ and so $(x'_1,x'_2)=(x_1,x_2)$.

Suppose that $D_1$ is correctly aligned with $D_2$ under $f$. For the mapping $f_\chi:[0,1]^2\to \R^2$ we assign a labeling $\phi:[0,1]^2\to \mathscr{Z}^2$  as per \emph{Condition O}. Denote the vertices of $[0,1]^2$ as follows: $A=(1,0)$, $B=(0,0)$, $C=(0,1)$, $D=(1,1)$.   From the definition of correct alignment, we infer the following    labeling of the vertices and the edges  of $[0,1]^2$:
\begin{itemize}
\item[(i)] Case $n_u=1$, $n_s=1$. Using the labeling associated to $f_\chi$ yields:
\begin{itemize}
\item [(i.a)]
$A\to (1,1)$, $B\to (-1,1)$,   $C\to (-1,-1)$, $D\to (1,-1)$, or

    $B\to (1,1)$, $A\to (-1,1)$,   $D\to (-1,-1)$, $C\to (1,-1)$,
\item [(i.b)]   $AB\to (1,1)$  or $(-1,1)$,   $CD\to  (-1,-1)$ or $(1,-1)$;
\item [(i.c)]   $BC\to  (-1,1)$ or (-1,-1),  $AD\to  (1,1)$ or $(1,-1)$, or

     $BC\to   (1,1)$ or $(1,-1)$,  $AD\to  (-1,1)$ or $(-1,-1)$;
 \end{itemize}
\item[(ii)] Case  $n_u=0$, $n_s=2$. Using the labeling associated to $f_\chi$ yields:
\begin{itemize}\item[(ii.a)]
$A\to (-1,1)$, $B\to (1,1)$,   $C\to (1,-1)$, $D\to (-1,-1)$,
\item[(ii.b)]    $AB\to   (1,1)$ or $(-1,1)$,   $AD\to   (-1,1)$ or $(-1,-1)$,   $CD\to (-1,-1)$ or
$(1,-1)$,   $BC\to   (1,1)$ or $(1,-1)$;
 \end{itemize}
\item[(iii)] Case  $n_u=2$, $n_s=0$. Using the labeling associated to $f^{-1}_\chi$ yields the same labeling rules as in case (ii).
\end{itemize}
See Fig.~\ref{fig:2D_windows}.

A  possible re-labeling of the points is $(1,1)\to 1$,  $(-1,1)\to 2$,  $(1,-1), (-1,-1)\to 3$.

\begin{figure}
\includegraphics[width=0.75\textwidth]{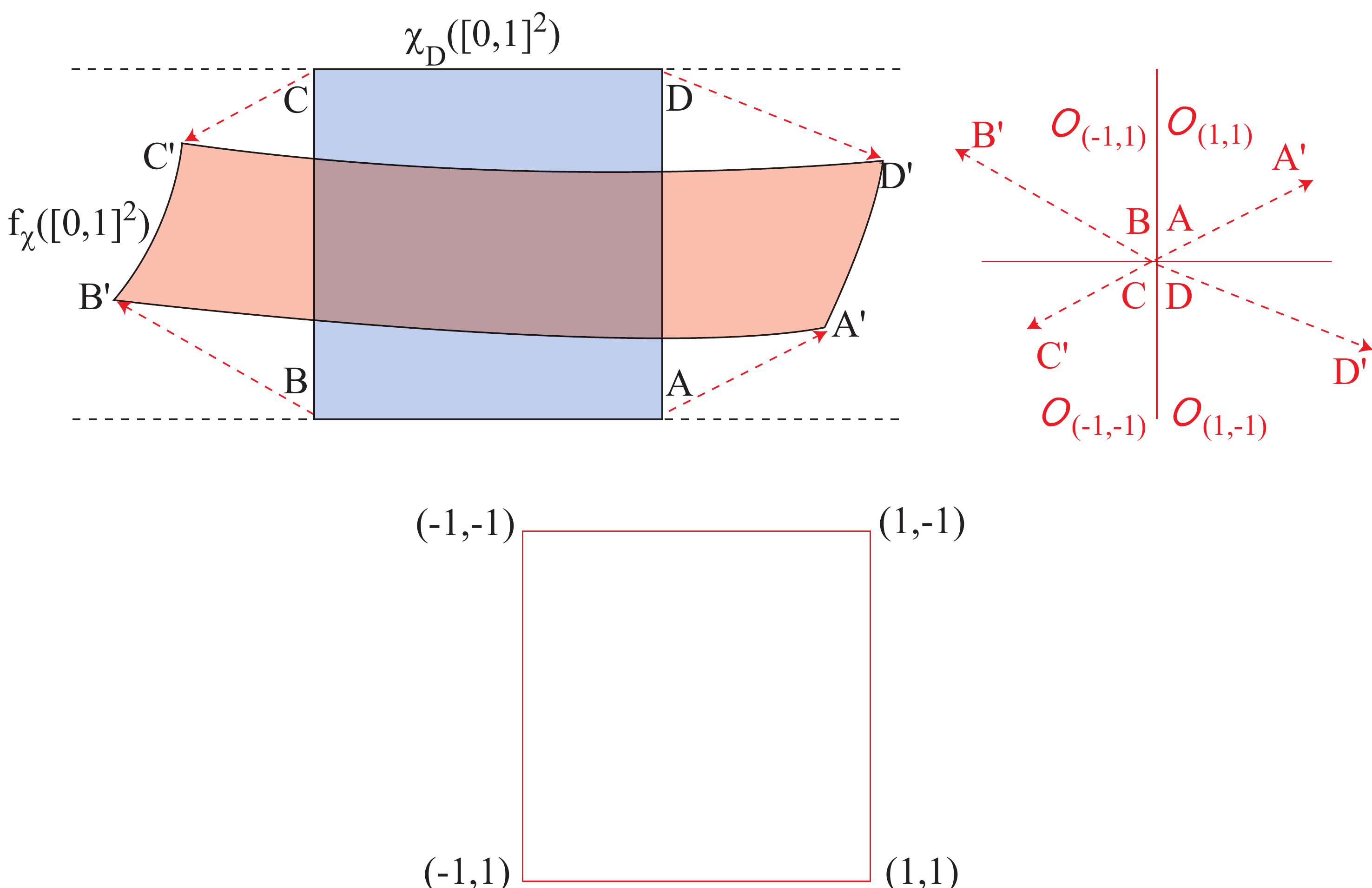}
\caption{2D correctly aligned windows and labeling of $[0,1]^2$}
\label{fig:2D_windows}
\end{figure}

\begin{prop}\label{prop:necessary_sufficient_2D}
If $D_1$ is correctly aligned with $D_2$ under $f$, then the   labeling of $\chi^{-1}_{D_1}(D_1)=[0,1]^2$  described above   { is \emph{non-degenerate}} and  has { \emph{non-zero index}}.

Conversely, in the case when $n_u=1$, $n_s=1$, if $D_1$, $D_2$ satisfy:
\begin{itemize}\item[(a)] $f_{\chi} ([0,1]^2 )\subset \R\times(0,1)$;
\item [(b)]  $f_{\chi}(\partial [0,1]\times [0,1])\cap [0,1]^2=\emptyset$;
 \end{itemize}
and if the labeling  as per \emph{Condition O} { is \emph{non-degenerate}}    and   of { \emph{non-zero index}}, then $D_1$ is correctly aligned with $D_2$ under $f$.
\end{prop}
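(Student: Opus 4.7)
The plan is to treat the two directions separately. For the forward direction both properties follow by inspection from the boundary constraints that correct alignment places on $f_\chi$; the main content lies in the converse, where a connectedness argument leaves two ``bad'' configurations and the non-zero index hypothesis is exactly what excludes them.

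For the forward direction, I first check \emph{non-degeneracy}. In each of the three cases (i)--(iii) of Definition~\ref{defn:win2d}, the sign of one coordinate of $\Delta z = f_\chi(z)-z$ is constant along every edge of $[0,1]^2$: for instance, in case $n_u=n_s=1$, condition (i.a) forces $\Delta z$ to have positive second coordinate on the bottom edge and negative on the top, while (i.b) fixes the sign of the first coordinate on each vertical edge. Consequently every edge admits at most two of the four vector labels, strictly fewer than the three needed for a complete set. To verify \emph{non-zero index}, I apply the change of labels $\psi\colon(1,1)\mapsto 1,\, (-1,1)\mapsto 2,\, (1,-1),(-1,-1)\mapsto 3$ from Example~\ref{ex:relabeling}. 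Reading off the corner values listed in the labeling enumeration (i)--(iii) preceding the proposition, in each sub-case exactly one edge of $[0,1]^2$ has endpoints carrying both labels $1$ and $2$ under $\psi\circ\phi$, while the other three edges do not carry the pair $\{1,2\}$. The recursive formula \eqref{eqn:defn_index} then collapses to the signed index of that single edge, giving $\ind_{[0,1]^2}(\phi) = \pm 1 \ne 0$.

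For the converse, (a) is exactly condition (i.a) of Definition~\ref{defn:win2d}, so only (i.b) needs to be established. Together (a) and (b) force the connected set $f_\chi(\{0\}\times[0,1])$ into the open set $\bigl((-\infty,0)\cup(1,+\infty)\bigr)\times(0,1)$, which has two connected components, hence the arc lies entirely in one of them; the same holds for $f_\chi(\{1\}\times[0,1])$. Correct alignment is precisely the statement that these two arcs land in \emph{different} components, so it remains to rule out the ``bad'' configurations where they land in the same component. In such a bad configuration the first coordinate of $\Delta z$ has a single constant sign on both vertical edges, so combined with the horizontal edge constraints the four corners of $[0,1]^2$ carry vector labels drawn from $\{(-1,1),(-1,-1)\}$ (if both vertical arcs go left) or from $\{(1,1),(1,-1)\}$ (if both go right). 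Under $\psi$ these sets become $\{2,3\}$ and $\{1,3\}$ respectively, so in either situation one of the labels $1$ or $2$ is absent from $V([0,1]^2)$, no edge carries the pair $\{1,2\}$, and \eqref{eqn:defn_index} yields $\ind_{[0,1]^2}(\phi)=0$, contradicting the hypothesis. Hence the bad configurations are impossible and (i.b) holds.

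The only real obstacle is bookkeeping --- verifying that the sign constraints on $\Delta z$ coming from the four distinct edges are mutually consistent at each shared corner, and tracking the two orderings that appear in the labeling enumeration. No Brouwer degree computation is required, since the index is read directly from \eqref{eqn:defn_index}; and although the index depends a priori on the choice of $\psi$, Lemma~\ref{lem:index_cubical} guarantees that the conclusion ``non-zero index'' is independent of that choice, so the converse really hinges only on the simple parity observation that a missing corner label forces every edge to miss the pair $\{1,2\}$.
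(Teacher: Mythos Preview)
Your argument is correct and follows the same overall strategy as the paper: the forward direction by direct inspection of the boundary labels, and the converse by contradiction via connectedness of the images of the two vertical edges. Two small remarks. First, the change of labels $(1,1)\mapsto 1,\ (-1,1)\mapsto 2,\ (1,-1),(-1,-1)\mapsto 3$ that you use is the one stated in the paper immediately before the proposition, not the one of Example~\ref{ex:relabeling} (which sends $(1,\pm1)\mapsto 1$); the computation goes through unchanged, but the citation is off. Second, in the converse your contradiction is with the \emph{non-zero index} hypothesis, whereas the paper's proof asserts instead that the bad configuration makes the labeling fail to be \emph{non-degenerate}. Your version is in fact the sharper one: when both vertical arcs land on the same side, each edge of $[0,1]^2$ still carries at most two vector labels, so the labeling remains non-degenerate; what actually breaks is that one of the labels $1$ or $2$ is absent from the corner set, forcing $\ind_{[0,1]^2}(\psi\circ\phi)=0$ by \eqref{eqn:defn_index}, exactly as you argue.
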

\begin{proof}
For the direct statement, the
points  on the boundary of $[0,1]^2$ inherit the labeling explicitly described above, which is { non-degenerate}  and  has { non-zero index}.

For the converse statement, we proceed by contradiction. If condition (i.b) from Definition \ref{defn:win2d} is not satisfied, it means that the   two components of $\partial[0,1]\times [0,1]$ are mapped by $f_\chi$ on the same side of $[0,1]^2$ within the strip $\R\times(0,1)$, which leads to a labeling that fails to be { non-degenerate}.
\end{proof}

\begin{rem}We note that in  Proposition \ref{prop:necessary_sufficient_2D}, the converse statement does not include the cases that $n_u=0$, $n_s=2$, or $n_u=2$, $n_s=0$, since assuming    condition  (ii), or (iii) from Definition \ref{defn:win2d}, respectively, automatically yields both correct alignment and  { non-degenerate}, { non-zero index} labeling.\end{rem}

Assume that $f_\chi$ is a  bi-Lipschitz function with Lipschitz constant $L$, relative to the norm $\|\cdot\|_\infty$ on $\R^2$.
Consider a subdivision of $[0,1]^2$   into $N^2$ squares $\{C_i\}_{i=1,\ldots,N^2}$ of side $1/N$.
We assign a  labeling to all points of $[0,1]^2$ according to \emph{Condition O}.
Let $\delta>0$ be small, and $N>0$ be large so that $(L+1)/N<\delta$.  If $C_i$ is a completely labeled square, then any point $z\in C_i$ is a \emph{$\delta$-approximate fixed point}  of $f_\chi$.
Indeed, the complete labeling implies, via the Intermediate Value Theorem,  that there exist points $\hat z=(\hat x_1, \hat x_2)\in C_i$ and $\check z=(\check x_1, \check x_2)\in C_i$  such that $\hat x'_1 =\hat x_1$ and  $\check x'_2=\check x_2$, respectively.
Then, for each $z=(x_1,x_2)\in D_1$ we have
\begin{equation}\label{eqn:deltafixed}\begin{split}\|f_\chi(z)-z\|_\infty&=\max\{|x'_1-x_1|,|x'_2-x_2|\}\\&\leq \max\{|x'_1-\hat x'_1|+|\hat x'_1 -\hat x_1|+|\hat x_1-x_1|,
\\ &\quad\quad\quad\,\,\,|x'_2-\check x'_2|+|\check  x'_2 -\check  x_2|+|\check x_2-x_2|\}
\\\leq& \frac{(L+1)}{N}\\<&\delta.
\end{split}\end{equation}

The next statement is a fixed point theorem in the case of a window correctly aligned to itself. We will distinguish between the cases (i), (ii) of Definition \ref{defn:win2d}, and the case (iii), for which the corresponding statement is indicated in parentheses.
 In the former the labeling as per {Condition O} is done with respect to $f_\chi$, while in the latter the labeling is done with respect to $f^{-1}_\chi$.

\begin{prop}\label{prop:2dfixed}
Let $D$ be a window and $\phi:[0,1]^2=\chi_D^{-1}(D)\to\mathscr{Z}^2$ be a labeling associated to $f_\chi$ as per \emph{Condition O} (resp., associated to $f^{-1}_\chi$).

(i) If a window $D$ is correctly aligned with itself under $f$, then
$f$ has   a  fixed point in~$D$.

(ii) If $\{C_i\}$ is a subdivision of $[0,1]^2=\chi^{-1}_{D}(D)$ then there exists  a square $C_*$ in the decomposition { with $\ind_{C_*}(\phi)\neq 0$; if $C_*$ further satisfies the \emph{non-degeneracy condition} on its faces, then   $f$ has   a  fixed point in $\chi_D(C_*)$ (resp. $f$ has   a  fixed point in $\chi_D(f_{\chi}^{-1}(C_*))$).}

(ii) Assume that $\chi_D$ is Lipschitz with Lipschitz constant $K>1$, and that  $f_\chi$ is bi-Lipichitz with Lipschitz constant $L>1$. Then, given $\delta>0$ and a sufficiently fine subdivision of $[0,1]^2$ into squares $\{C_i\}_{i=1,\ldots,N^2}$ of side $1/N$, so that  $K(L+1)/N<\delta$,  for every completely labeled square $C_*$, each point $\tilde z\in \chi_D(C_*)$ is a    $\delta$-approximate fixed point  of $f$ (resp. each point $\tilde z\in \chi_D(f_{\chi}^{-1}(C_*))$ is a    $\delta$-approximate fixed point  of $f$).
\end{prop}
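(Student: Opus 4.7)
The plan is to handle the three assertions in order, treating part (i) as the special case of (ii) applied to the trivial decomposition $\{[0,1]^2\}$. For (ii) I would first invoke Corollary \ref{cor:sperner_cubical_1}: by Proposition \ref{prop:necessary_sufficient_2D} the labeling $\phi$ is non-degenerate with $\ind_{[0,1]^2}(\phi)\neq 0$, so some cube $C_*$ in the given subdivision satisfies $\ind_{C_*}(\phi)\neq 0$ and is therefore completely labeled. This already delivers the existence assertion in (ii).

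To promote such a $C_*$ to a true fixed point, assuming the non-degeneracy condition on $\partial C_*$, I would set $F:=f_\chi-\mathrm{id}$ in cases (i), (ii) of Definition \ref{defn:win2d}, or $F:=f_\chi^{-1}-\mathrm{id}$ in case (iii). We may assume $F\neq \mathbf{0}$ on $\partial C_*$, else we already have a fixed point. By Proposition \ref{prop:Bekker}(iii), $|\ind_{C_*}(\phi)|=|\deg(\Phi)|\neq 0$ for any realization $\Phi:C_*\to T$. The key step is to build a homotopy through non-vanishing maps $\partial C_*\to\R^2\setminus\{\mathbf{0}\}$ connecting $F|_{\partial C_*}$ with a map obtained from $\Phi|_{\partial C_*}$ by re-centering the simplex at the origin: at each vertex $v$ both endpoint maps lie in $\mathscr{O}_{\phi(v)}$, and the non-degeneracy of $\phi$ on the edges of $C_*$ forbids the straight-line homotopy from crossing $\mathbf{0}$. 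Invariance of the Brouwer degree under such homotopy then yields $\deg(F,\inte(C_*),\mathbf{0})=\pm\ind_{C_*}(\phi)\neq 0$, hence $F$ vanishes in $\inte(C_*)$. The resulting zero $z^*\in C_*$ is a fixed point of $f_\chi$ (automatic in the resp. case since $f_\chi^{-1}(z^*)=z^*$ forces $f_\chi(z^*)=z^*$), and its image under $\chi_D$ is a fixed point of $f$ in $\chi_D(C_*)$; in the resp. case $z^*=f_\chi(z^*)\in C_*$ also implies $z^*\in f_\chi^{-1}(C_*)$, giving the claim. Part (i) is then the case $C_*=[0,1]^2$.

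For part (iii) I would adapt the estimate (\ref{eqn:deltafixed}) from the discussion preceding the proposition. Complete labeling of $C_*$ produces, via the intermediate value theorem applied along edges, a point $\hat z\in C_*$ with $\hat x_1'=\hat x_1$ and a point $\check z\in C_*$ with $\check x_2'=\check x_2$. The Lipschitz bound on $f_\chi$ together with $\mathrm{diam}_\infty(C_*)\leq 1/N$ yields $\|f_\chi(z)-z\|_\infty\leq (L+1)/N$ for every $z\in C_*$. Composing with the Lipschitz estimate for $\chi_D$,
\[\|f(\tilde z)-\tilde z\|_\infty=\|\chi_D(f_\chi(z))-\chi_D(z)\|_\infty\leq K\|f_\chi(z)-z\|_\infty\leq \frac{K(L+1)}{N}<\delta,\]
so $\tilde z=\chi_D(z)$ is a $\delta$-approximate fixed point of $f$. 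The resp. case is handled by taking $w\in f_\chi^{-1}(C_*)$, setting $z:=f_\chi(w)\in C_*$, and applying the same IVT argument with $f_\chi^{-1}$ in place of $f_\chi$ to the labeling of $C_*$; this gives $\|f_\chi(w)-w\|_\infty=\|z-f_\chi^{-1}(z)\|_\infty\leq (L+1)/N$, whence $\|f(\tilde z)-\tilde z\|_\infty\leq K(L+1)/N<\delta$ for $\tilde z=\chi_D(w)$.

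The main obstacle will be the degree/homotopy argument in the second paragraph, where the purely combinatorial datum $\ind_{C_*}(\phi)\neq 0$ must be converted into the analytic conclusion $F^{-1}(\mathbf{0})\cap\inte(C_*)\neq\emptyset$. All other steps are direct applications of Corollary \ref{cor:sperner_cubical_1} or Proposition \ref{prop:Bekker}, or triangle-inequality calculations. The subtle point is that the non-degeneracy hypothesis on the faces of $C_*$ is exactly the condition that rules out a spurious zero of the straight-line homotopy on $\partial C_*$; verifying this compatibility face by face, using the geometry of the hyperoctants $\mathscr{O}_\ell$, is where care is needed.
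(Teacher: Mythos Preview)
Your proposal is correct, but the route you take for parts (i) and (ii) is genuinely different from the paper's. The paper does not pass through a degree/homotopy argument to extract a fixed point from a cube $C_*$ with $\ind_{C_*}(\phi)\neq 0$. Instead, it argues constructively: given such a $C_*$ (with non-degenerate boundary labeling), it takes a sequence of ever finer subdivisions of $C_*$, applies Corollary~\ref{cor:sperner_cubical_1} at each stage to locate a completely labeled sub-square $C^N_{i^*_N}$, uses the Intermediate Value Theorem to find points $\hat z_N,\check z_N\in C^N_{i^*_N}$ with $(\hat x_1)_N'=(\hat x_1)_N$ and $(\check x_2)_N'=(\check x_2)_N$, and then extracts convergent subsequences by compactness; since the diameters shrink to zero, the limits coincide and give a genuine fixed point of $f_\chi$. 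Part (i) is proved this way on all of $[0,1]^2$, and part (ii) simply repeats the same refinement argument inside $C_*$. Your approach, by contrast, converts the combinatorial datum $\ind_{C_*}(\phi)\neq 0$ directly into $\deg(f_\chi-\mathrm{id},\inte(C_*),0)\neq 0$ via a straight-line homotopy to (a recentered) realization, with non-degeneracy on the edges guaranteeing the homotopy avoids~$0$. This is shorter and avoids the limiting process, but it leans on Brouwer degree machinery that the paper is deliberately trying to bypass in favor of elementary, Sperner-style arguments; the paper's method also dovetails with its algorithmic theme, since the successive refinements are exactly what one would implement numerically. For part (iii) your argument coincides with the paper's (and in the ``resp.'' case your bound $K(L+1)/N$ is in fact slightly sharper than the paper's $KL(L+1)/N$).
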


\begin{proof}
(i)  Let $\{C^N_i\}$  be a subdivision of $C$,  with $\textrm{diam}(C^N_i)\to 0$ as $N\to\infty$. For each $N$, by applying Corollary \ref{cor:sperner_cubical_1}, there exists {
a  square $C^N_{i^*_N}\subset C$ with $\ind_{C^N_{i^*_N}}(\phi)\neq 0$, hence completely labeled}; we choose and fix such a $C^N_{i^*_N}$.  As before, there exist some points $\hat z_{i^*_N}=((\hat x_1)_{i^*_N}, (\hat x_2)_{i^*_N})\in C^N_{i^*_N}$ and $\check z_{i^*_N}=((\check x_1)_{i^*_N}, (\check x_2)_{i^*_N})\in C^N_{i^*_N}$, such that $(\hat x_1)_{i^*_N}=(\hat x'_1)_{i^*_N}$ and $(\check x_2)_{i^*_N}=(\check x'_2)_{i^*_N}$.
By compactness, the sequences  $\hat z_{i^*_N}$ and $\check z_{i^*_N}$,
contain convergent subsequences  $\hat z_{i^*_{k_N}}$ and $\check z_{i^*_{k_N}}$, respectively.
Since the diameters of the corresponding completely
labeled squares tend to zero, these subsequences approach the same
limit $z=(x_1,x_2)$, for which we have   $x_1=x'_1$ and $x_2=x'_2$. Hence $z$ is a fixed point for $f_\chi$, so $\tilde{z}=\chi_D(z)$ is a fixed point for $f$.

For an alternative  proof (non-constructive) see \cite{GideaZ2004}.

(ii) Let $\{C_i\}$ be a subdivision as in the statement. { By Corollary \ref{cor:sperner_cubical_1}, there exists a square $C_*$ with $\ind_{C_*}(\phi)\neq 0$. If $C_*$ satisfies the non-degeneracy condition}, we   further subdivide $C_*$ into smaller squares  $\{C^N_i\}$ as above. The proof for (i) implies that there exists   a fixed point $z$ for $f_\chi$ in $C_*$, hence $p=\chi_D(z)$ is a fixed point for $f$  in $\chi_D(C_*)$.

(iii) Suppose that $C_{i^*}^N$ is a completely labeled square of the subdivision.

By \eqref{eqn:deltafixed}, for each $z\in C_{i^*}^N
$ we have $\|f_\chi(z)-z\|_\infty<(L+1)/N$. Hence, for each $\tilde z=\chi_D(z)\in \chi_D(C_{i^*}^N)$, we have
\[\|f(\tilde z)-\tilde z\|_\infty= \|\chi_D\circ f_\chi\circ \chi^{-1}_D(\chi_D(z))-\chi_D(z)\|_\infty\leq K\|f_\chi(z)-z\|_\infty<K(L+1)/N<\delta.
\]

In the case of labeling associated to $f^{-1}_\chi$, if $C_{i^*}^N$ is completely labeled, applying \eqref{eqn:deltafixed} to $f_\chi^{-1}$, and using that $f_\chi$ is bi-Lipschitz of Lipschitz constant $L$ yields $\|f^{-1}_\chi(z)-z\|_\infty<(L+1)/N$ for every $z\in C_{i^*}^N$. Thus $\|f_\chi(f^{-1}_\chi(z))- f^{-1}_\chi(z)\|_\infty< L(L+1)/N$, and so, for $\tilde z=\chi_D(f^{-1}_\chi(z))\in \chi_D(f^{-1}_\chi(C_{i^*}^N))$ we have $\|f(\tilde z)-\tilde z\|_\infty<KL(L+1)/N<\delta$.
\end{proof}

%Suppose that there exist $\chi_{D_1},\chi_{D_2}$ and $U$ a neighborhood of $[0,1]^n$ such that $f_{\chi} ([0,1]^n )\cap U\subset\R^{n_u}\times  (0,1)^{n_s}$. Lemma... you can replace $D_1$ by a subwindow.

\subsection{Correctly aligned windows in higher-dimensions}

As before, let
\[\mathscr{O}_{\ell}=\{(x_1,\ldots,x_n)\in\mathbb{R}^n\,|\,\forall i,  x_i l_i\geq 0\},\]
where $\ell= (l_1,\ldots,l_n)\in\mathscr{Z}^n$, with $l_i=\pm 1$ for $i=1,\ldots,n$.
Let $g:\R^n\to \R^n$ be a homeomorphism.
For a point $z=(x_1,\ldots, x_n)$ in $\R^n$, letting $g(z)=z'=(x'_1,\ldots, x'_n)$, and $\Delta z=(x'_1-x_1,\ldots, x'_n-x_n)$, we assign a label $\ell\in\mathscr{Z}^n$ as follows:

\begin{description}
\item[Condition O]{$ $}

\begin{itemize}\item if $\Delta z\in\inte (\mathscr{O}_\ell)$ then we assign to $z$ the label  $\ell$;
\item if  $\Delta z\in \inte(\mathscr{O}_{\ell_1}\cap \ldots
\cap \mathscr{O}_{\ell_k})$ for some labels $\ell_1,\ldots,\ell_k$ that are mutually distinct, then we assign to $z$ either one of the labels $\ell_1,\ldots,\ell_k$.
\item if $\Delta z=0$ then we assign to $z$ either label $\ell$.
\end{itemize}
\end{description}

A  possible re-labeling of the points from labels in $\mathscr{Z}^n$ to labels in $\{1,\ldots,n+1\}$ can be done as in Example \ref{ex:relabeling}.

Let $D_1$, $D_2$ be $n$-dimensional windows. We will assume that the homeomorphism $f:\mathbb{R}^n\to\mathbb{R}^n$ satisfies a bi-Lipschitz condition with Lipschitz constant $L$.

\begin{defn}\label{defn:win-nd} We say that the window $D_1$ is correctly aligned with $D_2$ under $f$ if there exist  corresponding homeomorphisms $\chi_{D_1},\chi_{D_2}$ with the following properties:
\begin{itemize}
\item[(i)] Case $n_u\neq 0$, $n_s\neq 0$:
\begin{itemize}\item[(i.a)] $f_{\chi} ([0,1]^n )\subset \R^{n_u}\times(0,1)^{n_s}$;
\item [(i.b)]  $f_{\chi} (\partial [0,1]^{n_u} \times [0,1]^s  ) \subset \left(\R^{n_u}\times(0,1)^{n_s}\right)\setminus [0,1]^{n}$;
    \item [(i.c)]  There exists $x^*_s\in(0,1)^{n_s}$ such that  the map $L:[0,1]^{n_u}\to \R^{n_u}$ defined by $L(x_u)=\pi_u\circ f_\chi(x_u,x^*_s)$ satisfies $\deg(L_{\mid (0,1)^{n_u}})\neq 0$;
 \end{itemize}
\item[(ii)]
Case  $n_u=0$, $n_s=n$:
  $f_{\chi}( [0,1]^n )\subset
(0,1)^n$;
\item[(iii)] Case  $n_u=n$, $n_s=0$:
 $f^{-1}_{\chi} ([0,1]^n) \subset (0,1)^n$.
\end{itemize}
\end{defn}

Suppose that $D_1$ is correctly aligned with $D_2$ under $f$, and let $n_u$ be the unstable-like dimension.
We   denote by $\{\mathscr{O}_{\ell_u}\}$ the octants of $\mathbb{R}^{n_u}$, where $\ell_u\in\mathscr{Z}^{n_u}$.
In the  cases (i) and (ii) of Definition \ref{defn:win-nd}, we will label all points  of $C=\chi^{-1}_{D_1}(D_1)$    according to \emph{Condition O} applied to $f_\chi$, and in case (iii) of Definition \ref{defn:win-nd}  we will label all points  of $C=\chi^{-1}_{D_2}(D_2)$   with labels  according to \emph{Condition O} applied to $f_\chi$.
We describe the labeling below,  for each of the cases (i), (ii), (iii) of Definition \ref{defn:win-nd}.

\emph{Case (i).} First consider the case when $n_u>0$ and $n_s>0$.
 
Labeling $C=[0,1]^n$ as per \emph{Condition O},   as we did in the $2$-dimensional case  in Section \ref{sec:2D_windows}, does not necessarily yield a   \emph{non-degenerate} labeling. We will transform  $C=[0,1]^n$ into an $n$-dimensional polytope $\widetilde C$,   construct a subdivision  $\widetilde C=\bigcup_i C_i$ into smaller cubes,  in order to obtain a  \emph{non-degenerate}  labeling.  Moreover, we will construct  $\widetilde C$  so that the resulting labeling of its vertices in \emph{complete}.  (As we pointed out earlier, completeness is a necessary, but not sufficient condition for the index to be non-zero.)

The faces of the resulting polytope $\tilde C$ will consist of the faces of the $C_i$'s in the subdivision that are contained in $\partial([0,1]^n)$. We perform this construction below.

\emph{Estimates on $\Delta z$.}
Let \begin{equation*}\begin{split}\rho_s&=\min \{d(p,p')\,|\,p\in[0,1]^{n_u}\times\partial[0,1]^{n_s}, p'\in f_\chi([0,1]^{n}\times\partial[0,1]^{n_s})\}>0,\\
\rho_u&=\min \{d(p,p')\,|\,p\in\partial[0,1]^{n_u}\times[0,1]^{n_s}, p'\in f_\chi(\partial[0,1]^{n}\times[0,1]^{n_s})\}>0,\\
\rho&=\min\{\rho_s,\rho_u\}>0,
\end{split}\end{equation*}
where $d$ is the distance corresponding to $\|\cdot\|_\infty$. The fact that $\rho_u$, $\rho_s$, and hence $\rho$ are positive follows from Definition \ref{defn:win-nd} (i.a) and (i.b), respectively.  This fact implies that for each $z\in\partial[0,1]^n$ we have
\begin{equation}\label{eqn:delta_z}
\|\Delta z\|=\|z'-z\|_\infty=\|f_\chi(z)-z\|_\infty>\rho.
\end{equation}
In particular, for any point $z\in\partial[0,1]^n$ we have that $\Delta z$ is not contained within a ball of radius $\rho$ around the origin in $\R^{n}$.

\smallskip

\emph{Coarse cubical decomposition of $C$.} We divide $[0,1]^n$ into
$M^n$ identical cubes $\{C_i\}_{i=1,\ldots,M^n}$, of side $1/M$.
The quantity $M$ from above is required to satisfy the following condition:
\begin{description}
\item[Condition P]{$ $}

\begin{itemize}
\item[(P1)] For each $\ell_u\in\mathscr{Z}^{n_u}$, there exists a vertex $v$ of a cube $C_i$ lying on $\partial[0,1]^{n_u}\times[0,1]^{n_s}$, such that the  label of $z$ is  $\ell=(\ell_u,\ell_s)\in \mathscr{Z}^{n}$, for some $\ell_s\in \mathscr{Z}^{n_s}$.

\item[(P2)] $(L+1)/M<\rho/2$.
\end{itemize}
\end{description}

Condition (P1) implies that the cubical decomposition  $\{C_i\}_{i=1,\ldots,M^n}$  is fine enough so that for the vertices $z$ of the cubes with faces lying on $\partial[0,1]^{n_u}\times[0,1]^{n_s}$,
the vectors $\Delta z=(\Delta z_u,\Delta z_s)$ have the
$\Delta z_u$ component taking values in each  of the hyperoctants $\mathscr{O}_{\ell_u}$ of $\R^{n_u}$. The argument for this claim is below.

First we note Definition \ref{defn:win-nd}-(i.a)  implies that the  corresponding $\pi_s(\Delta z)$ take values in each  of the sectors $\mathscr{O}_{\ell_s}$ of $\R^{n_s}$.
Definition \eqref{defn:win-nd}-(i.b) and -(i.c) imply that,   for some $x^*_s\in(0,1)^{n_s}$  the projection $\pi_{u}$ onto $[0,1]^{n_u}$  of the image of $[0,1]^{n_u}\times\{x ^*_s\}$ under $f_\chi$ contains the rectangle $[0,1]^{n_u}$ inside its interior, and that the boundary of $\pi_u(f_\chi([0,1]^{n_u}\times\{x^*_s\}))$ wraps around the boundary of $[0,1]^{n_u}$, in the sense that for $z\in\partial\left[ \pi_u(f_\chi([0,1]^{n_u}\times\{x^*_s\}))\right]$, the corresponding $\pi_u(\Delta  z)$ visits all sectors $\mathscr{O}_{\ell_u}$ of $\R^{n_u}$.

It follows that $\Delta z=(\Delta  z_u,\Delta  z_s)$ take values in a complete set of hyperoctants $\mathscr{O}_{\ell}$ of $\R^{n}$. (This does not mean that $\Delta z$ takes values in  all hyperoctants $\mathscr{O}_{\ell}$ of $\R^{n}$.)
Thus, the corresponding labeling of the vertices of the $C_i$'s is \emph{complete}.

We append the vertices and faces $S_i$ of the $C_i$'s that lie on $\partial[0,1]^n$ to $C$, thus transforming $C$ into a polytope $\widetilde C$ (like a Rubik's cube, see Fig.~\ref{fig:rubik}).
\begin{figure}
\includegraphics[width=0.25\textwidth]{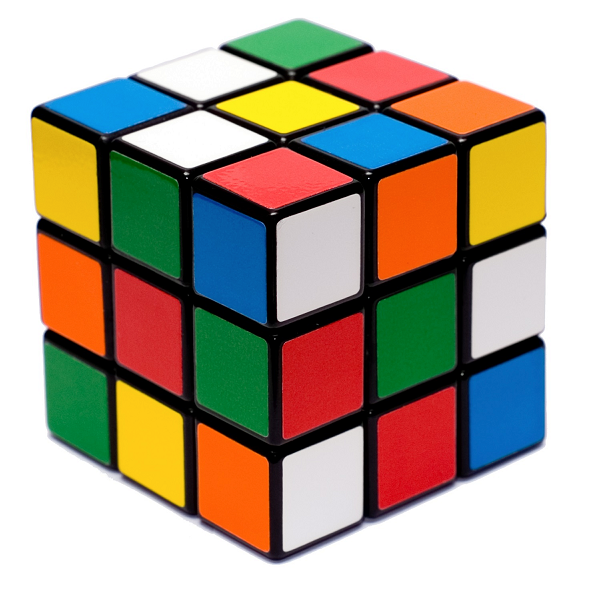}
\caption{Coarse decomposition of $C$ and transformation into a polytope $\widetilde C$.}
\label{fig:rubik}
\end{figure}

Now we discuss  Condition (P2). Note first that for $z_1,z_2\in \partial[0,1]^{n}$, we have \begin{equation}\begin{split}\|\Delta z_1- \Delta z_2\|_\infty&=\|(z'_1-z_1)-(z'_2-z_2)\|_\infty\leq \|z'_1-z'_2\|_\infty+\|z_1-z_2\|_\infty\\&\leq (L+1)\|z_1-z_2\|_\infty.
\end{split}\end{equation}
This implies that, the image of any cube $C_i$ under the map $z\mapsto \Delta z$ has diameter less than $\rho/2$. Hence,  the image under $z\mapsto \Delta z$ of every face $S_i$ of a cube $C_i$ that lies on  $\partial[0,1]^{n}$, is disjoint from a $\rho$-ball around the origin. Hence no such a face $S_i$ can carry a complete set of labels. That is, the labeling is \emph{non-degenerate}.

When the windows are correctly aligned, as assumed above, it also follows that the index of the labeling,  is non-zero.
Condition (i.a) of correct alignment implies that the index relative  to the labels in $\mathscr{Z}^{n_s}$ is non-zero. Also, conditions (i.b) and (i.c), together with (P1), imply that the
the index relative to the labels in $\mathscr{Z}^{n_u}$ is non-zero. Proposition \ref{prop:Bekker}-(iii), saying that the index of a labeling equals the Brouwer degree of a realization, and  the product property of the Brouwer degree, imply that the overall labeling is non-degenerate. 

\emph{Case (ii).} Consider the case when $n_u=0$.  We label all points of $C=\chi^{-1}_{D_1}(D_1)$ according to the quadrant $\mathscr{O}_\ell$ where $f_{\chi}(z)-z$ lands, as per \emph{Condition O}. The resulting labeling is \emph{non-degenerate} and of \emph{non-zero index}.

\emph{Case (iii).} Consider the case when $n_s=0$. We label all points of $C=\chi^{-1}_{D_2}(D_2)$ according to the quadrant $\mathscr{O}_\ell$ where $f^{-1}_{\chi}(z)-z$ lands, as per \emph{Condition O}. The resulting labeling is \emph{non-degenerate} and of \emph{non-zero index}.

\begin{ex} In Fig.~\ref{fig:3D_windows} we illustrate a 3D  window that is correctly aligned to itself under some map; the points $A,B,\ldots$, are mapped to the points $A',B',\ldots$, respectively. The unstable-like dimension is $n_u=2$ and the stable-like dimension is $n_s=1$.  If we label the vertices $A,B,\ldots$ according to \emph{Condition O}, we obtain
$A\to \ell_1:=(-1,1,1)$, $B\to \ell_2:=(-1,1,1)$, $C\to \ell_3:=(-1,-1,1)$, $D\to \ell_4:=(-1,-1,1)$,
$E\to \ell_5:=(-1,1,-1)$, $F\to \ell_6:=(-1,1,-1)$, $G\to \ell_7:=(-1,-1,-1)$, $H\to \ell_8:=(-1,-1,-1)$.
We notice that the corresponding set of labels is not complete; indeed, for the corresponding octants we have that
$\mathscr{O}_{\ell_1}\cap\ldots\cap \mathscr{O}_{\ell_8}=\{x\in \R^n\,|\, x_1\leq 0, x_2=0,x_3=0\}$. { In particular, the index is zero.}
The labels $\ell_u$ corresponding to the unstable directions take only the values $(-1,1)$ and $(-1,-1)$, so \emph{Condition P} is not satisfied. However, by taking a coarse cubical decomposition of $[0,1]^n$ satisfying \emph{Condition P}, as illustrated in Fig.~\ref{fig:3D_windows}, we can obtain that the labeling is \emph{non-degenerate} and { has non-zero index}.

\begin{figure}
\includegraphics[width=0.75\textwidth]{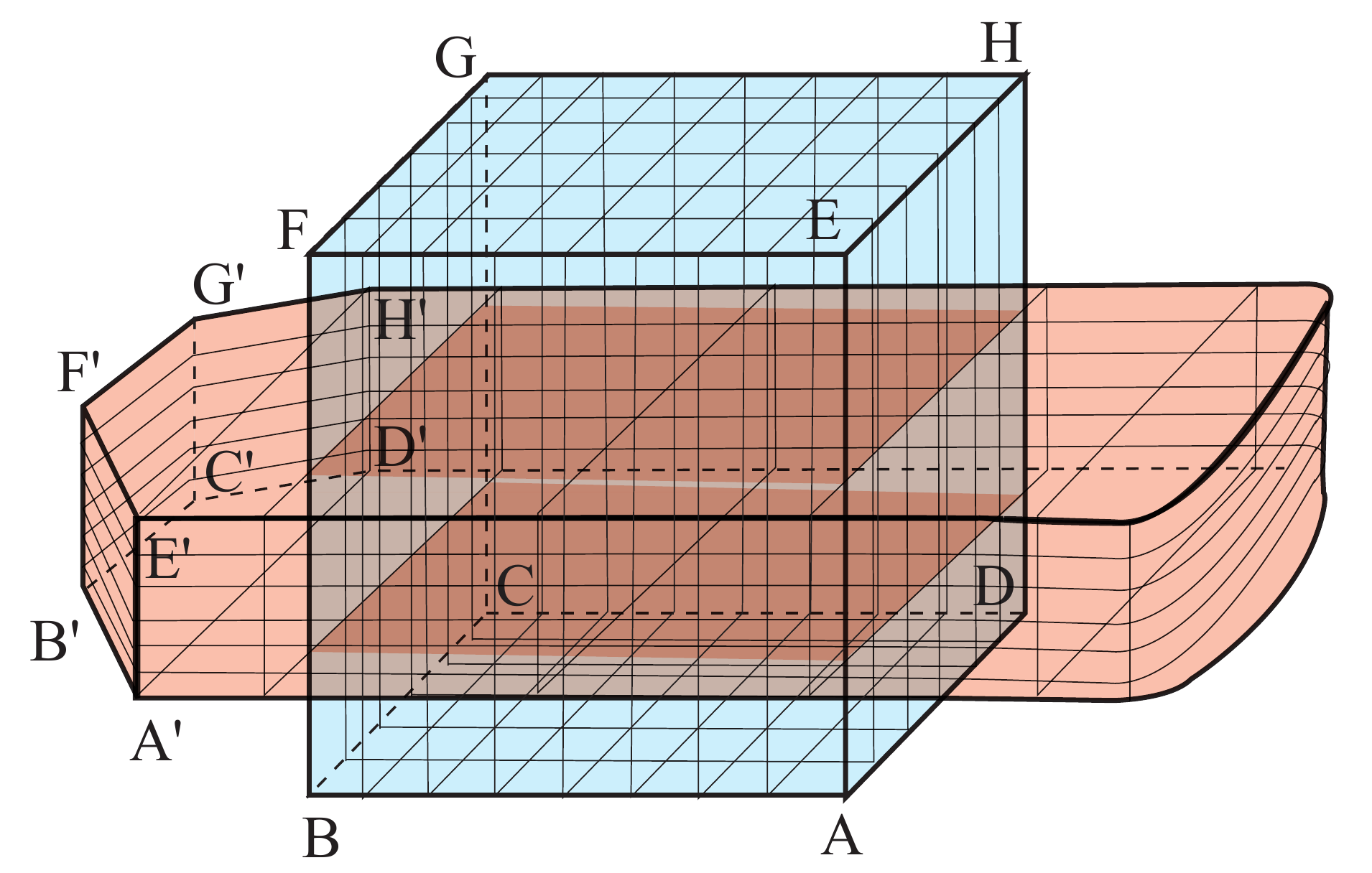}
\caption{3D correctly aligned windows, and cubical decomposition satisfying \emph{Condition P}.}
\label{fig:3D_windows}
\end{figure}
\end{ex}

To summarize, if  $D_1$ is correctly aligned with $D_2$ under $f$,   in case (i) and (ii) we start with $C=\chi_{D_1}^{-1}(D_1)$ and assign a labeling associated  to $\Delta z=f_\chi(z)-z$, and in case (iii) we start with $C=\chi_{D_2}^{-1}(D_2)$ and assign a labeling associated  to $\Delta z=f^{-1}_\chi(z)-z$.
In each case, we perform a cubical decomposition of $C$ into smaller cubes $\{C_i\}_i$, and transform $C$ into a polytope $\widetilde C$ with a cubical decomposition $\{C_i\}_i$ such that the labeling is \emph{non-degenerate} and  has \emph{non-zero index}.

\begin{prop}\label{prop:necessary_sufficient_alignment_nD} Assume  $D_1$ is correctly aligned with $D_2$ under $f$, and  $\widetilde C=\{C_i\}$ is a subdivision of $[0,1]^n=\chi^{-1}_{D}(D)$ satisfying \emph{Condition P}. Then the labeling described above, in each of the cases (i), (ii), (iii) of Definition \ref{defn:win-nd},  satisfies
\[\ind_{\widetilde C}(\phi)\neq 0.\]

Conversely, in the case when $n_u>0$, $n_s>0$, if $D_1$, $D_2$ satisfy:
\begin{itemize}\item[(a)]   $f_{\chi} ([0,1]^n )\subset \R^{n_u}\times(0,1)^{n_s}$;
\item [(b)]  $f_{\chi} (\partial [0,1]^{n_u} \times [0,1]^{n_s}  ) \subset \left(\R^{n_u}\times(0,1)^{n_s}\right)\setminus [0,1]^{n}$;
 \end{itemize}
and the corresponding decomposition $\widetilde C=\bigcup _i C_i$ satisfies $\ind_{\widetilde C}(\phi)\neq 0$, then $D_1$ is correctly aligned with $D_2$ under $f$.
\end{prop}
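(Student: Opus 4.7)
The plan is to treat the direct and converse statements separately, using Proposition~\ref{prop:Bekker}(iii) to translate the index of $\phi$ into the Brouwer degree of a realization $\Phi$, and exploiting the product structure $\mathscr{Z}^n=\mathscr{Z}^{n_u}\times\mathscr{Z}^{n_s}$ of the labels. For the direct statement in cases (ii) and (iii) of Definition~\ref{defn:win-nd}, the inclusion $f_\chi([0,1]^n)\subset(0,1)^n$ (respectively $f_\chi^{-1}([0,1]^n)\subset(0,1)^n$) forces $\Delta z$ to point strictly inward on each boundary face of $[0,1]^n$, so on each face $\{x_i=0\}$ the $i$-th coordinate of the label is $+1$ and on $\{x_i=1\}$ it is $-1$; after a product change of labels this reduces to a standard boundary labeling whose realization has degree $\pm 1$, hence $\ind_{\widetilde C}(\phi)=\pm 1\neq 0$. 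For case (i) with $n_u,n_s>0$ I split $\phi=(\phi_u,\phi_s)$ and choose a product change of labels $\psi=\psi_u\times\psi_s$. On the ``stable'' part of the boundary $[0,1]^{n_u}\times\partial[0,1]^{n_s}$, condition (i.a) forces $\phi_s$ to point strictly inward into $(0,1)^{n_s}$, so the $s$-realization has degree $\pm 1$. On the ``unstable'' part $\partial[0,1]^{n_u}\times[0,1]^{n_s}$, condition (i.b) together with (i.c) and part (P1) of Condition~P ensures that on $\partial[0,1]^{n_u}\times\{x^*_s\}$ the labels $\phi_u$ cover all of $\mathscr{Z}^{n_u}$, so the $u$-realization has degree $\pm\deg(L|_{(0,1)^{n_u}})\neq 0$. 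Multiplicativity of the Brouwer degree on the product-like boundary then gives $\deg(\Phi)\neq 0$, and hence $\ind_{\widetilde C}(\phi)\neq 0$.

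For the converse (only needed in the case $n_u,n_s>0$), I reverse the argument. Conditions (a) and (b) are exactly (i.a) and (i.b), so I only have to establish (i.c). By (a), on $[0,1]^{n_u}\times\partial[0,1]^{n_s}$ the component $\phi_s$ points strictly inward into $(0,1)^{n_s}$, and therefore the realization of $\phi_s$ on this part of the boundary has degree $\pm 1$. Since $\ind_{\widetilde C}(\phi)\neq 0$, Proposition~\ref{prop:Bekker}(iii) gives $\deg(\Phi)\neq 0$; by the product decomposition of the degree, the realization of $\phi_u$ on $\partial[0,1]^{n_u}\times\{x^*_s\}$ must have non-zero degree for some regular $x^*_s\in(0,1)^{n_s}$. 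A standard homotopy argument (valid because (a) keeps $\pi_s\circ f_\chi$ inside $(0,1)^{n_s}$ throughout $[0,1]^n$, so that no slice is a singular value) identifies this degree, up to a sign, with $\deg(L|_{(0,1)^{n_u}})$ where $L(x_u)=\pi_u\circ f_\chi(x_u,x^*_s)$. This is exactly condition (i.c), and combined with (a)$=$(i.a) and (b)$=$(i.b) it yields correct alignment.

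The main obstacle will be justifying the product/K\"unneth-type factorization of the Brouwer degree on the polytope $\widetilde C$. The subdivision $\widetilde C=\bigcup_i C_i$ is not literally a product cell structure adapted to $[0,1]^{n_u}\times[0,1]^{n_s}$, so I will need to choose the realization $\Phi$ (or a homotopic one) whose restriction to $\partial\widetilde C$ factors appropriately with respect to the splitting $\partial[0,1]^n=(\partial[0,1]^{n_u}\times[0,1]^{n_s})\cup([0,1]^{n_u}\times\partial[0,1]^{n_s})$; this is possible because the labels themselves respect the splitting, but tracking the orientations carefully is delicate, and is precisely where the recursive sign conventions in \eqref{eqn:defn_index} become essential.
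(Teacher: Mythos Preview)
Your proposal is correct and follows essentially the same route as the paper: for the direct statement the paper also splits into the $\mathscr{Z}^{n_u}$ and $\mathscr{Z}^{n_s}$ components, uses (i.a) to force the stable labels inward and (i.b)--(i.c) together with Condition~P to make the unstable index nonzero, and then invokes the product property of the Brouwer degree via Proposition~\ref{prop:Bekker}(iii); for the converse the paper likewise fixes an $x^*_s\in(0,1)^{n_s}$, passes to the slice labeling in $\mathscr{Z}^{n_u}$, and identifies its index with $\deg(L)$. The paper is terser and does not isolate the product-factorization obstacle you flag in your last paragraph, so your account is in fact more careful on exactly the point where the argument is delicate.
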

\begin{proof}
The direct statement was shown above.

For the converse statement, pick any $x^*_s\in(0,1)^{n_s}$ and consider the labeling of the $n_s$-dimensional polytope  $[0,1]^{n_u}\times\{x^*_s\}$ as per \emph{Condition O}. As before, it follows that the index of the labeling, relative to the labels in $\mathscr{Z}^{n_u}$, is non-zero. By Proposition \ref{prop:Bekker}, this index equals to the Brouwer degree of a realization $\Phi:[0,1]^{n_u}\times\{x^*_s\}\to T^{n_u}$, where $T^{n_u}$ is the $(n_u)$-dimensional simplex. This degree is non-zero and equals, up to a sign,   the degree of $L(\cdot)=\pi_u\circ f_\chi(\cdot,x^*_s)$, which concludes the proof.

\end{proof}

In the statement below, we distinguish between the cases (i), (ii) of Definition \ref{defn:win-nd}, and the case
(iii), for which the corresponding statement is indicated in parentheses.

\begin{prop}\label{prop:ndfixed}
Let $D$ be a window and $\phi:\chi_D^{-1}(D)\to\mathscr{Z}^n$ be a labeling associated to $f_\chi$ as per \emph{Condition O} (resp., associated to $f^{-1}_\chi$). Assume that  $\widetilde C=\{C_i\}$ is a subdivision of $[0,1]^n=\chi^{-1}_{D}(D)$ satisfying \emph{Condition P} and \emph{Condition O}.

(i) If $D$ is correctly aligned with itself under $f$, then $f$
has a fixed point in $ D $.

(ii) Let  $\{C'_j\}_{j=1,\ldots,N^n}$ be a    fine subdivision of $[0,1]^n$ into cubes of side $1/N$, where $N$ is a multiple of  $M$ (so that the family of cubes $C'_j$ subdivide each of the cubes $C_i$). Then there exists  a  cube $C'_*$ with $\ind_{C'_*}(\phi)\neq 0$ in the decomposition; if $C'_*$ further satisfies the \emph{non-degeneracy condition}, then   $f$ has   a  fixed point $p$ in $\chi_D(C'_*)$ (resp., $f$ has a fixed point in $\chi_{D}(f_\chi^{-1}(C'_*))$).

(iii) Assume that $\chi_D$ is Lipschitz with Lipschitz constant $K>1$, and that  $f_\chi$ is bi-Lipichitz with Lipschitz constant $L>1$. Then, given $\delta>0$ and a   subdivision   $\{C'_j\}_{j=1,\ldots,N^n}$ of $[0,1]^n$  into cubes  of side $1/N$ as above, so that  $K(L+1)/N<\delta$, then for every completely labeled cube $C'_j$, each point $\tilde z\in \chi_D(C'_j)$ is a    $\delta$-approximate fixed point  of $f$ (resp., each point $\tilde z\in \chi_{D}(f_\chi^{-1}(C'_j))$).
\end{prop}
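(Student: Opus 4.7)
The plan is to lift the $2$-dimensional argument in Proposition \ref{prop:2dfixed} to arbitrary dimension by replacing the ad hoc quadrant argument with the cubical Sperner Lemma of Section \ref{sec:sperner_cubical}, together with the index-based characterization of correct alignment from Proposition \ref{prop:necessary_sufficient_alignment_nD}. In cases (i) and (ii) of Definition \ref{defn:win-nd} the labeling is associated to $\Delta z = f_\chi(z)-z$ on $C=\chi_D^{-1}(D)$; in case (iii) it is associated to $\Delta z = f^{-1}_\chi(z)-z$ on $C=\chi_D^{-1}(D)$. Since the two cases are parallel (just swap $f_\chi$ with $f^{-1}_\chi$ and insert the preimage $f_\chi^{-1}(\cdot)$ at the end), I will write the argument for cases (i), (ii); the statement in parentheses will follow by applying the same reasoning to $f^{-1}_\chi$ and then pushing forward by $\chi_D\circ f_\chi^{-1}$, using that $f_\chi$ being bi-Lipschitz of constant $L$ gives the same bounds on $\Delta z$.

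First I would prove part (ii). Since $D$ is correctly aligned with itself, Proposition \ref{prop:necessary_sufficient_alignment_nD} applied to the coarse decomposition $\widetilde C = \bigcup_i C_i$ (satisfying \emph{Condition P}) gives $\ind_{\widetilde C}(\phi)\neq 0$. Any further subdivision $\{C'_j\}_{j=1,\ldots,N^n}$ with $N$ a multiple of $M$ refines $\widetilde C$ into a cubical complex on which $\phi$ remains non-degenerate, so $\ind_{\widetilde C}(\phi)$ is unchanged. By Corollary \ref{cor:sperner_cubical_1} there exists a small cube $C'_*$ with $\ind_{C'_*}(\phi)\neq 0$. If $C'_*$ additionally satisfies the non-degeneracy condition on its faces, then $C'_*$ plays the role of a self-aligned sub-window for $f_\chi$: its vertex labels form a complete set in $\mathscr{Z}^n$, i.e.\ for each coordinate $i\in\{1,\ldots,n\}$ there exist vertices $v^+, v^-\in V(C'_*)$ whose labels have opposite signs in the $i$-th slot. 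Thus $\pi_i(\Delta v^+)\ge 0\ge \pi_i(\Delta v^-)$, and by the continuity of $z\mapsto \pi_i(\Delta z)$ on $C'_*$ (specifically, applying the Intermediate Value Theorem along a path in $C'_*$ joining $v^+$ to $v^-$) we obtain a point $z^{(i)}\in C'_*$ with $\pi_i(f_\chi(z^{(i)})-z^{(i)})=0$.

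Part (i) then follows by a compactness/diagonal argument: take a sequence of subdivisions with $N\to\infty$; each gives a cube $C'_{*,N}$ of diameter $1/N$ with $\ind\neq 0$. The $n$ sequences $\{z^{(i)}_N\}_N$ each have convergent subsequences whose limits coincide, because $\mathrm{diam}(C'_{*,N})\to 0$, so a common limit $z_\infty\in C$ satisfies $f_\chi(z_\infty)=z_\infty$, yielding the fixed point $\chi_D(z_\infty)\in D$ for $f$. For part (iii), the Lipschitz estimate is the same as \eqref{eqn:deltafixed}: for any $z\in C'_j$ and using the points $z^{(i)}\in C'_j$ from the IVT step,
\begin{equation*}
\|f_\chi(z)-z\|_\infty \le \max_i\bigl(|\pi_i f_\chi(z)-\pi_i f_\chi(z^{(i)})| + |\pi_i(z^{(i)}-z)|\bigr) \le \frac{L+1}{N}.
\end{equation*}
Composing with $\chi_D$, whose Lipschitz constant is $K$, yields $\|f(\tilde z)-\tilde z\|_\infty \le K(L+1)/N < \delta$ for every $\tilde z\in \chi_D(C'_j)$. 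In case (iii) of Definition \ref{defn:win-nd} the same estimate gives $\|f_\chi^{-1}(z)-z\|_\infty < (L+1)/N$, so $\|f_\chi(f_\chi^{-1}(z))-f_\chi^{-1}(z)\|_\infty \le L(L+1)/N$ and hence, for $\tilde z=\chi_D(f_\chi^{-1}(z))$, $\|f(\tilde z)-\tilde z\|_\infty \le KL(L+1)/N<\delta$ (after enlarging $N$ by a factor of $L$ if desired).

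The main obstacle is the IVT step: unlike the $2$-dimensional case where the labels $(1,1),(-1,1),(-1,-1)$ or similar can be joined by edges of the small square, in higher dimensions a complete set in $\mathscr{Z}^n$ need not contain an antipodal pair $(\ell,-\ell)$, only pairs that disagree in each coordinate separately. One must therefore argue coordinate-by-coordinate, selecting for each $i$ a distinct pair $(v^+_i,v^-_i)$ and applying the IVT along any continuous path in the convex set $C'_*$. This is exactly the content of completeness of labels in $\mathscr{Z}^n$ (Lemma \ref{lem:partition}), and it is where the non-degeneracy hypothesis on the faces of $C'_*$ is used — without it, the labeling could degenerate along a face and prevent the selection of $v^+_i, v^-_i$ interior to $C'_*$ with the required opposite signs in the $i$-th coordinate.
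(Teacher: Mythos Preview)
Your proposal is correct and follows essentially the same route as the paper, which simply says the proof is analogous to Proposition~\ref{prop:2dfixed}; you have spelled out precisely that generalization---use Proposition~\ref{prop:necessary_sufficient_alignment_nD} to get $\ind_{\widetilde C}(\phi)\neq 0$, invoke Corollary~\ref{cor:sperner_cubical_1} at each scale to obtain small cubes of nonzero index, apply the coordinate-wise IVT, and finish with compactness and the Lipschitz estimate \eqref{eqn:deltafixed}.

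One small correction to your closing paragraph: the non-degeneracy hypothesis on $C'_*$ is not really what enables the IVT step (completeness of labels, which follows from $\ind_{C'_*}(\phi)\neq 0$, already furnishes the vertex pairs $v^+_i,v^-_i$ with opposite $i$-th signs). Rather, non-degeneracy is what allows you to re-apply Corollary~\ref{cor:sperner_cubical_1} to $C'_*$ itself, so that the refinement/compactness argument of part~(i) can be run \emph{inside} $C'_*$ to locate the actual fixed point there---exactly as in the 2D proof of (ii).
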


\begin{proof} The proof is similar to the proof of Proposition \ref{prop:2dfixed}, and the details are left to the reader.
\end{proof}

\subsection{Detection of periodic points and symbolic dynamics}
Assume that $p_1$ is a periodic point of period $k$ for $f$; the orbit of $p_1$ is $\{p_1,\ldots,p_k\}$, with $f(p_k)=p_1$.
Let $F:(\R^n)^k\to (\R^n)^k$ be given by
\begin{equation}
\label{eqn:F}
F(z_1,\ldots, z_k)=(f(z_k), f(z_1),\ldots,f(z_{k-1})).
\end{equation}
Note that   $\{p_1,\ldots,p_k\}$ is a period-$k$ orbit if and only if  $F(p_1,\ldots,p_k)=(p_1,\ldots, p_k)$, that is, $(p_1,\ldots,p_k)\in (\R^n)^k$ is a fixed point for $F$.

Now consider a finite sequence of windows $D_1,\ldots, D_k$ in $\R^n$. We are interested in periodic orbits $\{p_1,\ldots,p_k\}$ with  $p_j\in D_{j}$, $j=1,\ldots, k$. Assume that for $j=1,\ldots, k-1$, $D_j$ is  correctly aligned with $D_{j+1}$ under $f$, and also $D_k$ is  correctly aligned with $D_{1}$ under $f$.
{ Here we only consider correct alignment as in Definition \ref{defn:win-nd}-(i).}
See Fig.~\ref{fig:windows}.
Denote by $\chi_{D_j}$, the equivalence class of homeomorphisms corresponding to $D_j$, for $j=1,\ldots,k$. Let $\chi_D:(\R^n)^k\to (\R^n)^k$ be given by $\chi_D(z_1,\ldots, z_k)=(\chi_{D_1}(z_1),\ldots,\chi_{D_k}(z_{k}))$.

\begin{lem}\label{lem:product_aligned}
(i) Let $D=\chi_D\left(\Pi_{i=1}^{k} [0,1]^n\right)\subseteq  (\R^n)^k$, and
\begin{equation}\begin{split}
D^{-}&= \chi_D\left(\Pi_{j=1}^{k}\partial[0,1]^{n_u}\times[0,1]^{n_s}\right),\\
D^{+}&= \chi_D\left(\Pi_{j=1}^{k} [0,1]^{n_u}\times\partial[0,1]^{n_s}\right).
\end{split}\end{equation}
Then $D$ is an $(nk)$-dimensional window, with $(n_uk)$-unstable-like, and $(n_sk)$-stable like dimensions.

(ii) If for $j=1,\ldots, k-1$, $D_j$ is  correctly aligned with $D_{j+1}$ under $f$, and   $D_k$ is  correctly aligned with $D_{1}$ under $f$, then $D$ is correctly aligned with $D$ under $F$.
\end{lem}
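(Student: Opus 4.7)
The plan is to reduce the lemma to a product calculation, then invoke the product property of the Brouwer degree.

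For (i), I would first choose a convenient representative of $\chi_D$: by permuting coordinates on $\Pi_{j=1}^k[0,1]^n$ to group all $k$ unstable-like blocks together and all $k$ stable-like blocks together, one obtains a homeomorphism mapping $[0,1]^{n_uk}\times[0,1]^{n_sk}$ onto $D$ in the same equivalence class. Under this identification the sets written in the statement as $\Pi_{j=1}^k\partial[0,1]^{n_u}\times[0,1]^{n_s}$ and $\Pi_{j=1}^k[0,1]^{n_u}\times\partial[0,1]^{n_s}$ coincide with $\partial[0,1]^{n_uk}\times[0,1]^{n_sk}$ and $[0,1]^{n_uk}\times\partial[0,1]^{n_sk}$, so $D$ satisfies the window definition with unstable-like dimension $n_uk$ and stable-like dimension $n_sk$.

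For (ii), the key step is to compute $F_\chi:=\chi_D^{-1}\circ F\circ\chi_D$. A direct calculation from \eqref{eqn:F} and $\chi_D(w_1,\ldots,w_k)=(\chi_{D_1}(w_1),\ldots,\chi_{D_k}(w_k))$ gives
\[
F_\chi(w_1,\ldots,w_k)=(\tF_k(w_k),\,\tF_1(w_1),\,\ldots,\,\tF_{k-1}(w_{k-1})),
\]
where $\tF_j:=\chi_{D_{j+1}}^{-1}\circ f\circ \chi_{D_j}$ (indices mod~$k$) is the conjugated map witnessing the correct alignment of $D_j$ with $D_{j+1}$. In other words, $F_\chi=\sigma\circ(\tF_1\times\cdots\times\tF_k)$, where $\sigma$ is the cyclic permutation of the $k$ factors. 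Conditions (i.a) and (i.b) of Definition \ref{defn:win-nd} then follow componentwise: the inclusions $\tF_j([0,1]^n)\subset\R^{n_u}\times(0,1)^{n_s}$ multiply to $F_\chi([0,1]^{nk})\subset\R^{n_uk}\times(0,1)^{n_sk}$, and if any $w_{j_0}$ has its unstable component on $\partial[0,1]^{n_u}$ then $\tF_{j_0}(w_{j_0})$ exits $[0,1]^n$, forcing $F_\chi(w)\notin[0,1]^{nk}$.

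The substantive step is the degree condition (i.c). I would pick $x_s^*=(x_{s,1}^*,\ldots,x_{s,k}^*)\in\Pi_j(0,1)^{n_s}$ with each $x_{s,j}^*$ witnessing the degree condition for the pair $(D_j,D_{j+1})$. The associated map $L:[0,1]^{n_uk}\to\R^{n_uk}$ from Definition \ref{defn:win-nd}-(i.c) then factors as $L=\sigma_u\circ(L_1\times\cdots\times L_k)$, where $L_j(x_u)=\pi_u\tF_j(x_u,x_{s,j}^*)$ and $\sigma_u$ is the cyclic permutation of the $k$ unstable-like blocks of size $n_u$. By multiplicativity of the Brouwer degree for products and the fact that a coordinate permutation has degree $\pm 1$, $\deg(L_{\mid(0,1)^{n_uk}})=\pm\prod_j\deg((L_j)_{\mid(0,1)^{n_u}})\neq 0$. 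The main snag is purely bookkeeping: one needs to keep careful track of the coordinate reordering between the per-factor and grouped-block conventions, but once this is set up the product-of-degrees argument delivers (i.c) in one line.
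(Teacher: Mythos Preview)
Your proposal is correct and follows essentially the same approach as the paper, which dispatches (i) by ``elementary set theory'' and (ii) by ``elementary set theory and the product property of the Brouwer degree'' with a reference to \cite{GideaZ2004}. You have simply made explicit the coordinate regrouping and the factorization $L=\sigma_u\circ(L_1\times\cdots\times L_k)$ that the paper leaves implicit.
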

\begin{proof} (i) Follows from elementary set theory. (ii) Follows from  elementary set theory and from the product property of the Brouwer degree. See \cite{GideaZ2004}.
\end{proof}

\begin{figure}
\includegraphics[width=1.0\textwidth]{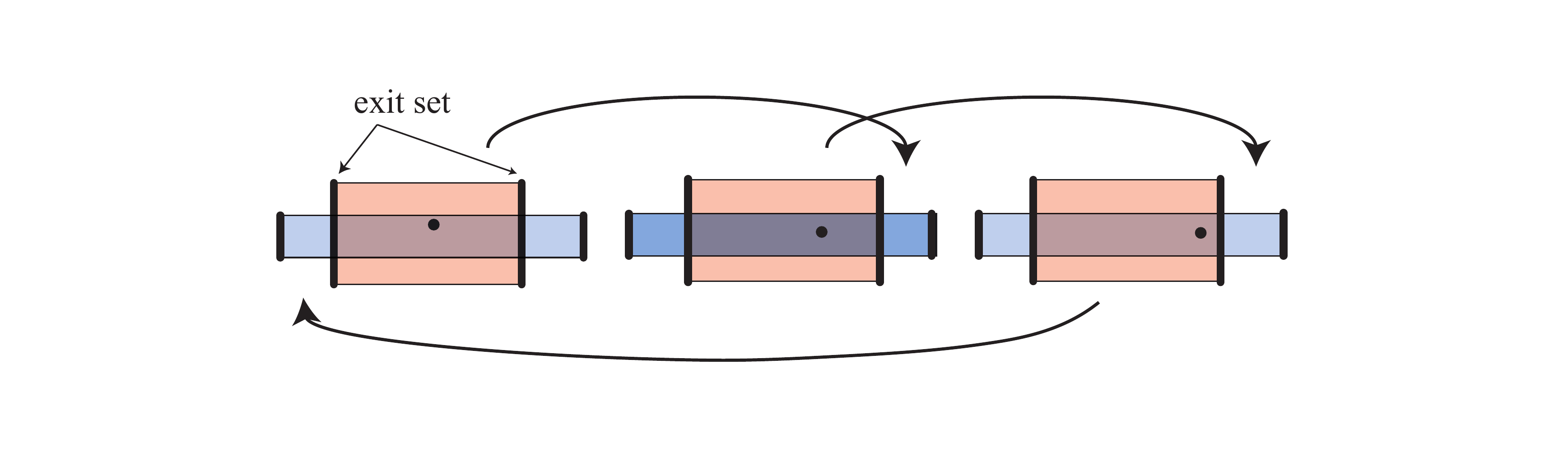}
\caption{Periodic sequence of  correctly aligned windows.}
\label{fig:windows}
\end{figure}

We associate to each rectangle $D_j$, $j=1,\ldots,k$,  an  $n$-dimensional polytope $P_j$, obtained by dividing each underlying cube $\chi^{-1}_{D_j}(D_j)=[0,1]^n$ into $(N_j)^n$ cubes of side $1/N_j$, where $N_j$ is chosen large enough so that \emph{Condition P} is satisfied.

The cubical decomposition of each window $D_j$ determines a { \emph{coarse rectangular decomposition} of $\chi^{-1}_D(D)=([0,1]^{n})^{k}$ into multi-dimensional rectangles} of the form
\begin{equation}\label{eqn:cubic_coarse}\begin{split}C_{\alpha}=&(C_1)_{\alpha_1}\times (C_2)_{\alpha_2}\times \cdots \times (C_k)_{\alpha_k},\\&\textrm { where } {\alpha}=(\alpha_1,\ldots, \alpha_k)\in \mathscr{A}:=\{1,\ldots,(M_1)^n\}\times\cdots \times\{1,\ldots,(M_k)^n\}.\end{split}  \end{equation}

Further, we divide each $D_j$ into small cubes $\{(C_j)_\beta\}_{\beta=1,\ldots,(N_j)^n}$, of side $1/N_j$, where $N_j$ is a multiple of $M_j$, obtaining a { \emph{fine rectangular decomposition}}. For each vertex $z_j$ of a cube $(C_j)_\beta$ in the cubical decomposition of $D_j$, we assign a label $\ell_j=(\pm 1, \ldots, \pm 1)\in\mathscr{Z}^n$, based on the sector of $\mathscr{O}_\ell\subseteq\R^n$ where the displacement vector $\Delta z_j=f_{\chi_{D_j,D_{j+1}}}(z_j)-z_j$ lands.

Relative to the product window $D$, this can also be regarded as an $(nk)$-dimensional polytope.
The cubical decomposition of each window $D_j$ determines a { rectangular} decomposition of $D$ of the form
\begin{equation}\label{eqn:cubic_fine}\begin{split}C_{\beta}=&(C_1)_{\beta_1}\times (C_2)_{\beta_2}\times \cdots \times (C_k)_{\beta_k},\\&\textrm { where } {\beta}=(\beta_1,\ldots, \beta_k)\in\mathscr{B}:=\{1,\ldots,(N_1)^n\}\times\cdots \times\{1,\ldots,(N_k)^n\}.\end{split}  \end{equation}
Note that for each $\alpha \in \mathscr{A}$, $(C_\alpha \cap C_\beta)_{\beta\in\mathscr{B}}$ forms a { rectangular} decomposition of  $C_\alpha$.

Each vertex $z=(z_1,\ldots,z_k)$ of a cube $C_{\beta}$ is assigned a label $\ell=(\ell_1,\ldots,\ell_k)\in(\mathscr{Z}^{n})^{k}$ whose component $\ell_j\in \mathscr{Z}^{n}$ is the label corresponding to the vertex $z_j$, $k=1,\ldots, k$, according to \emph{Condition O}.

\begin{prop}\label{prop:periodic-orbit-higher-dimensions}
(i) Given a sequence of windows $D_1,\ldots, D_k$ as above, with $D_j$  correctly aligned with $D_{j+1}$ under $f$, for $j=1,\ldots, k$, and   $D_k$   correctly aligned with $D_{1}$ under $f$. Then
there exists a periodic orbit $p_1,\ldots,p_k$ of $f$ of period $k$ with $p_j\in \textrm{int}(D_j)$, for $j=1,\ldots,k$.

(ii) If $\{(C_j)_{\alpha}\}$ is a coarse subdivision of $[0,1]^n=\chi^{-1}_{D_j}(D_j)$ then for each  $j$ there exists  $(C_j)_{\alpha^*_j}$  with $\ind_{(C_j)_{\alpha^*_j}}(\phi)\neq 0$; if each $(C_j)_{\alpha^*_j}$ further satisfies the \emph{non-degeneracy condition} on its faces, then   $f$ has  a periodic orbit $p_1,\ldots,p_k$ with $p_j\in \chi_{D_j}((C_j)_{\alpha^*_j})$,  for $j=1,\ldots,k$.

(iii) Assume that $\chi_{D_j}$ is Lipschitz with Lipschitz constant $K_j>1$, and that  $f_{\chi_{D_j,D_{j+1}}}$ is bi-Lipschitz with Lipschitz constant $L_j>1$. Let $\delta>0$ and consider a sufficiently fine subdivision of each $\chi_{D_j}^{-1}(D_j)=[0,1]^n$ into cubes $\{(C_j)_{\beta_j}\}_{\beta_j=1,\ldots,N_j^n}$ as above, so that  $\max_j\left\{\frac{K_{j+1}(L_j+1)}{N}\right\}<\delta$. Then for every sequence of cubes $(C_j)_{\beta^*_j}\subseteq \chi^{-1}_{D_j}(D_j)$ that are completely labeled,  every sequence of points $\tilde z_j=\chi_{D_j}(z_j)\in \chi_{D_j}( (C_j)_{\beta^*_j})$, $j=1,\ldots,k$, is a $\delta$-approximate periodic orbit of period $k$.
\end{prop}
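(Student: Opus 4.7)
The plan is to reduce all three parts to the corresponding assertions of Proposition \ref{prop:ndfixed}, applied to the map $F$ defined in \eqref{eqn:F} and to the product window $D=\chi_D(\prod_j[0,1]^n)$. By Lemma \ref{lem:product_aligned}, $D$ is an $(nk)$-dimensional window with $(n_uk)$ unstable-like dimensions, and $D$ is correctly aligned with itself under $F$. Moreover, $(p_1,\ldots,p_k)\in D$ is a fixed point of $F$ if and only if $\{p_1,\ldots,p_k\}$ is a period-$k$ orbit of $f$ with $p_j\in D_j$, so part (i) is immediate from Proposition \ref{prop:ndfixed}-(i) applied to $F$, with the interior claim $p_j^*\in\textrm{int}(D_j)$ coming from the strict inclusions in the correct-alignment definition.

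For (ii), I would use that the coarse product decomposition \eqref{eqn:cubic_coarse} is the Cartesian product of the coarse decompositions of each $\chi_{D_j}^{-1}(D_j)$, and the product labeling $\phi$ satisfies two factorization properties. First, hyperoctants in $\R^{nk}$ factor as Cartesian products of hyperoctants in $\R^n$, so a set of vertex labels on a product cube $C_\alpha=\prod_j(C_j)_{\alpha_j}$ is complete iff the vertex labels on each factor $(C_j)_{\alpha_j}$ are complete. Second, by the product property of the Brouwer degree together with Proposition \ref{prop:Bekker}-(iii), $\textrm{ind}_{C_\alpha}(\phi)=\pm\prod_j\textrm{ind}_{(C_j)_{\alpha_j}}(\phi_j)$. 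Since each window's labeling has non-zero index by Proposition \ref{prop:necessary_sufficient_alignment_nD}, Corollary \ref{cor:sperner_cubical_1} produces cubes $(C_j)_{\alpha^*_j}$ with non-zero index in each $D_j$; their product $C_{\alpha^*}$ is then a non-zero-index cube in the product decomposition. If the non-degeneracy condition holds on its faces, Proposition \ref{prop:ndfixed}-(ii) applied to $F$ and $D$ gives a fixed point of $F$ in $\chi_D(C_{\alpha^*})=\prod_j\chi_{D_j}((C_j)_{\alpha^*_j})$, whose components are the claimed $p_j$'s.

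For (iii), the product homeomorphism $\chi_D$ is Lipschitz with constant $K=\max_jK_j$ and $F_\chi$ is bi-Lipschitz with constant $L=\max_jL_j$ relative to the $\|\cdot\|_\infty$-norm on $(\R^n)^k$. Using the factorization from (ii), a completely labeled product cube $C'_{\beta^*}=\prod_j(C_j)_{\beta^*_j}$ decomposes into completely labeled cubes in each window, and a componentwise application of the Intermediate Value estimate \eqref{eqn:deltafixed} yields $\|F_\chi(z)-z\|_\infty<\max_j(L_j+1)/N_j$ for every $z\in C'_{\beta^*}$. Composing with $\chi_D$ and reading off the $j$-th component of $F(\tilde z)-\tilde z$, which equals $f(\tilde z_{j-1})-\tilde z_j$ (indices cyclic), gives $\|f(\tilde z_j)-\tilde z_{j+1}\|_\infty<\max_jK_{j+1}(L_j+1)/N_j<\delta$, so that $\{\tilde z_1,\ldots,\tilde z_k\}$ is a $\delta$-approximate periodic orbit.

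The main obstacle is making the product factorization in (ii)--(iii) rigorous: specifically, verifying that completeness of the product labeling and non-zero index of the product cube both decompose over the Cartesian factors, and that the non-degeneracy condition on the faces of $C_{\alpha^*}$ is inherited from the factors. This ultimately rests on the product property of the Brouwer degree and on the algebra of hyperoctants in product spaces. A secondary bookkeeping task is to choose the $N_j$'s so that the worst-case constant $\max_j K_{j+1}(L_j+1)/N_j$ drops below $\delta$; once these are arranged, the proposition falls out of Proposition \ref{prop:ndfixed} applied to the product system $(F,D)$.
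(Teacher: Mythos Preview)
Your proposal is correct and follows essentially the same route as the paper: reduce via Lemma~\ref{lem:product_aligned} to the product window $D$ correctly aligned with itself under $F$, then invoke the fixed-point machinery. The paper carries out the argument for (i) by hand (coarse decomposition, Sperner's Lemma on the product, Intermediate Value Theorem, compactness) rather than simply citing Proposition~\ref{prop:ndfixed}, and for (ii) it goes in the opposite direction from you---first extracting a nonzero-index \emph{product} cube $C_{\alpha^*}$ and then reading off nonzero-index factors $(C_j)_{\alpha^*_j}$, whereas you build the factors first and take their product---but both directions rest on the same multiplicativity of the index that you flag as the main obstacle.
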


\begin{proof}
(i) By Lemma \ref{lem:product_aligned}, the correct alignment of $D_j$ with $D_{j+1}$ under $f$ implies that $D$ is correctly aligned with itself under $F$. The cubical decompositions  $\{(C_j)_\alpha\}$  of $[0,1]^n=\chi^{-1}_{D_j}(D_j)$, $j=1,\ldots,k$  determine a coarse decomposition $C_\alpha$ of $([0,1]^{n})^{k}$ as in \eqref{eqn:cubic_coarse}. By Theorem \ref{thm:sperner_cubical}, there exists a  cube $C_{\alpha_*}=(C_1)_{\alpha^*_1}\times(C_2)_{\alpha^*_2}\times\ldots\times (C_k)_{\alpha^k_*}$ in this decomposition with $\ind_{C_{\alpha_*}}(\phi)\neq 0$. The labeling   of the vertices of $C_{\alpha^*}$ with respect to the map $F_{\chi_D}$, induce a labeling  of each  cube  $(C_j)_{\alpha^*_j}$ with respect to the corresponding map $f_{\chi_{D_j},\chi_{D_{j+1}}}$ such that $\ind_{(C_j)_{\alpha^*_j}}(\phi)\neq 0$.

Take now a sequence of fine cubical decomposition $(C_j)_{\beta_j^N}$ of $\chi^{-1}_{D_j}(D_j)$,  as in \eqref{eqn:cubic_fine}, with $\textrm{diam}((C_j)_{\beta_j^N})\to 0$ as $N\to\infty$. By the above argument, within each subdivision one can find a  cube $(C_j)_{\beta_{j}^{*N}}$ with $\ind_{(C_j)_{\beta_{j}^{*N}}}(\phi)\neq 0$. Since such labeling is also complete, it implies  that for each $i\in\{1,\ldots,n\}$ there exists an $n$-tuple of points  $ z^{i,N}_j\in (C_j)_{\beta_{j}^{*N}}$, such that for each $i$ we have that $\pi_i\left(f_{\chi}( z^{i,N}_j)- z^{i,N}_{j+1}\right)=0$. By successively extracting convergent subsequence in each $i$-coordinate, for each $j$ we obtain $n$ subsequences in $(C_j)_{\alpha^*_j}$ of the $  z^{i,N}_j$'s  that are convergent to the same limit $  z_j\in (C_j)_{\alpha^*_j}$, and such that $\pi_i\left(f_{\chi}( z_j)- z_{j+1}\right)=0$ for all $i=1,\ldots,n$, that is, $f_\chi( z_j)= z_{j+1}$. Hence
$p_j=\chi_{D_j}( z_j)$, $j=1,\ldots,k$, is a periodic sequence of period $k$ for $f$.

(ii) Using the non-degeneracy condition on the labeling and that  $\ind_{(C_j)_{\alpha^*_j}}(\phi)\neq 0$ for $j=1,\ldots,k$,  we apply the previous argument to the collection of cubes $(C_j)_{\alpha^*_j}$, obtaining a periodic orbit $p_j\in \chi_{D_j}((C_j)_{\alpha^*_j})$ for $f$, where $j=1,\ldots,k$.

(iii) Consider the points $z^{i,N}_j\in (C_j)_{\beta^*_j}$ as before, with $N$ large enough as in the statement.  Let $z_j$ be an arbitrary point in $(C_j)_{\beta^*_j}$, and $\hat z_j=\chi_{D_j}(z_j)$, for $j=1,\ldots,k$.
We have \begin{equation*}\begin{split}\|f(\hat z_j)-\hat z_{j+1}\|_\infty &=\|\chi_{D_{j+1}}\circ f_{\chi_{D_{j}}, \chi_{D_{j+1}}}\circ \chi^{-1}_{D_{j}}(\chi_{D_{j}}(z_{j}))-\chi_{D_{j+1}} (z_{j+1})\|_\infty\\&\leq K_{j+1}\|f_{\chi_{D_{j}}, \chi_{D_{j+1}}}(z_j)-z_{j+1}\|_\infty\\
&= K_{j+1}\max_{i=1,\ldots, n}|\pi_i(f_{\chi_{D_{j}}, \chi_{D_{j+1}}}(z_j))-\pi_i(z_{j+1})|\\
&\leq K_{j+1}\max_{i=1,\ldots,n}\left (
\left|\pi_i(f_{\chi_{D_{j}}, \chi_{D_{j+1}}}(z_j))-\pi_i(f_{\chi_{D_{j}}, \chi_{D_{j+1}}}(z^{i,N}_j))\right|\right .
\\&{}\qquad\qquad +
\left . \left|\pi_i(f_{\chi_{D_{j}}, \chi_{D_{j+1}}}(z^{i,N}_j) -\pi_i(z^{i,N}_{j+1})\right|\right .\\&\qquad\qquad +
\left . \left|\pi_i( z^{i,N}_{j+1})-\pi_i(z_{j+1})\right|\right )\\
&\leq\frac{K_{j+1}(L_j+1)}{N}\\&<\delta.
\end{split}\end{equation*}
Thus, $\hat z_1,\ldots, \hat z_k$   is a $\delta$-approximate periodic orbit of period $k$ for $f$.
\end{proof}

\begin{prop}\label{prop:symbolic-dynamics-higher-dimensions}
(i) Assume that $D_1,\ldots, D_k$ is a sequence of windows as above. Let $\Gamma=(\gamma_{ij})_{i,j=1,\ldots,k}$ be a transition matrix, where $\gamma_{ij}=0$ or $1$; assume that for any $i,j$ with $\gamma_{ij}=1$, $D_i$ is correctly aligned with $D_j$ under $f$. Consider the topological Markov chain associated to the transition matrix $\Gamma$ defined by
\[\Omega_\Gamma=\{\omega:=(\omega_t)_{t\in\mathbb{Z}}\,|\,\omega_t\in\{1,\ldots,k\}  \textrm { and } \gamma_{\omega_t\omega_{t+1}}=1\textrm{ for all } t\},\]
and the shift map
$\sigma:\Omega_\Gamma\to\Omega_\Gamma$ given by  $(\sigma(\omega))_t=\omega_{t+1}$,  $t\in\mathbb{Z}$.
Then, for every sequence $\omega \in\Omega_\Gamma$, there exists an orbit $(p_t)_{t\in\mathbb{Z}}$ of $f$, with  $p_t:=f^t(p_0)\in \textrm{int}(D_{\omega_t})$, for~all~$t$.

(ii) Assume that $\chi_{D_j}$ is Lipschitz with Lipschitz constant $K>1$, and that  $f_{\chi_{D_{j},D_{l}}}$ is bi-Lipschitz with Lipschitz constant $L>1$, for all $j,l\in\{1,\ldots,k\}$. Let $\delta>0$, $T\in\mathbb{Z}^+$, and $\omega\in \Omega_\Gamma$. Consider a sufficiently fine subdivision of each $\chi_{D_j}^{-1}(D_j)=[0,1]^n$ into cubes $\{(C_j)_{\beta_j}\}_{\beta_j=1,\ldots,N^n}$ as above, so that  $\max_j\left\{\frac{K (L+1)}{N}\right\}<\delta$, $j=1,\ldots,k$.
Then for every sequence of cubes $(C_{\omega_t})_{\beta^*_t}\subseteq \chi^{-1}_{D_{\omega_t}}(D_{\omega_t})$ that are completely labeled,  every sequence of points $\tilde z_t=\chi_{D_{\omega_t}}(z_{\omega_t})\in \chi_{D_{\omega_t}}( (C_{\omega_t})_{\beta^*_t})$, $t=1,\ldots,T$, is a $\delta$-approximate orbit  of length $T$, in the following sense
\[d(f(\tilde z_t), \tilde z_{t+1})<\delta, \textrm { for } t=1,\ldots,T.\]
\end{prop}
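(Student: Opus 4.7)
The plan is to prove (i) by a compactness and diagonalization argument that reduces the bi-infinite orbit problem to the existence of finite orbit segments, and to prove (ii) by a direct adaptation of the quantitative estimates in Proposition \ref{prop:periodic-orbit-higher-dimensions}(iii).

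For part (i), the first step is to establish, for each $T \geq 1$, a finite orbit segment of $f$ that threads the windows $D_{\omega_{-T}}, D_{\omega_{-T+1}}, \ldots, D_{\omega_T}$. Since $\gamma_{\omega_t\omega_{t+1}} = 1$ for all $t$, consecutive windows are correctly aligned under $f$. I would then form the product window $\mathbf{D}^{(T)} = D_{\omega_{-T}} \times \cdots \times D_{\omega_T}$ in $(\R^n)^{2T+1}$ and equip its cubical decomposition with the product labeling based on the displacements $\Delta z_t = f_{\chi_{D_{\omega_t},D_{\omega_{t+1}}}}(z_t) - z_{t+1}$ for $t = -T, \ldots, T-1$. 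A non-periodic analog of Lemma \ref{lem:product_aligned}, combined with the product property of the Brouwer degree, shows that this labeling is non-degenerate and has non-zero index. Applying Theorem \ref{thm:sperner_cubical} to a sequence of finer refinements and extracting coordinate-wise limits via the intermediate value theorem, exactly as in the proof of Proposition \ref{prop:periodic-orbit-higher-dimensions}(i), produces points $p_{-T}^{(T)}, \ldots, p_T^{(T)}$ with $f(p_t^{(T)}) = p_{t+1}^{(T)}$ and $p_t^{(T)} \in D_{\omega_t}$ for every $t \in [-T,T]$.

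The second step of (i) is the passage to the limit. The sequence $(p_0^{(T)})_{T \geq 1}$ lies in the compact set $D_{\omega_0}$, so one can extract a convergent subsequence $p_0^{(T_j)} \to p_0$. Setting $p_t := f^t(p_0)$, the continuity of $f$ and the closedness of each $D_{\omega_t}$ yield $p_t \in D_{\omega_t}$ for every $t \in \Z$, since for any fixed $t$ and all $T_j \geq |t|$ one has $f^t(p_0^{(T_j)}) = p_t^{(T_j)} \in D_{\omega_t}$. The strict interior condition $p_t \in \textrm{int}(D_{\omega_t})$ then follows from Definition \ref{defn:win-nd}(i.a)--(i.b): the relation $p_t = f(p_{t-1})$ with $p_{t-1} \in D_{\omega_{t-1}}$ forces $p_t$ strictly inside $D_{\omega_t}$ in the stable directions by (i.a), while $p_t = f^{-1}(p_{t+1})$ together with (i.b) forces $p_t$ away from the exit face of $D_{\omega_t}$, i.e., strictly inside in the unstable directions.

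For part (ii), I would mimic the proof of Proposition \ref{prop:periodic-orbit-higher-dimensions}(iii), with the cyclic sequence $D_1, \ldots, D_k$ replaced by the linear sequence $D_{\omega_1}, \ldots, D_{\omega_T}$. The completely labeled product cube $\prod_{t=1}^T (C_{\omega_t})_{\beta^*_t}$ yields, for each $t \in \{1,\ldots,T-1\}$ and each coordinate $i \in \{1,\ldots,n\}$, points $z_t^{i,*} \in (C_{\omega_t})_{\beta^*_t}$ and $z_{t+1}^{i,*} \in (C_{\omega_{t+1}})_{\beta^*_{t+1}}$ with $\pi_i\bigl(f_{\chi_{D_{\omega_t},D_{\omega_{t+1}}}}(z_t^{i,*}) - z_{t+1}^{i,*}\bigr) = 0$. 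For arbitrary $z_t \in (C_{\omega_t})_{\beta^*_t}$ and $\tilde z_t = \chi_{D_{\omega_t}}(z_t)$, the triangle inequality together with the bi-Lipschitz bound $L$ on $f_\chi$ and the Lipschitz bound $K$ on $\chi_{D_{\omega_{t+1}}}$ produces the same chain of inequalities as in Proposition \ref{prop:periodic-orbit-higher-dimensions}(iii), yielding $\|f(\tilde z_t) - \tilde z_{t+1}\|_\infty \leq K(L+1)/N < \delta$ for every $t \in [1,T-1]$. The main obstacle is the first step of (i): adapting the self-covering product structure of Lemma \ref{lem:product_aligned} from the periodic case (where the cyclic map $F$ of \eqref{eqn:F} gives an automatic self-alignment) to the non-periodic finite case, where one must either introduce a modified product map with one coordinate held fixed and apply a dimension-reduced Sperner argument, or combinatorially compose the covering relations along the finite sub-sequence. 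Once the finite-segment existence is established, the compactness limit and Lipschitz estimates are routine.
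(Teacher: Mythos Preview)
Your overall strategy for (i)---finite orbit segments plus a compactness limit---and for (ii)---repeat the Lipschitz estimate of Proposition~\ref{prop:periodic-orbit-higher-dimensions}(iii)---matches the paper exactly. The one place you diverge is in how you produce the finite segments, and you correctly flag this as ``the main obstacle.'' The paper resolves it with a trick you do not mention: given the finite chain $D_{\omega_{-T}},\ldots,D_{\omega_T}$, it introduces an \emph{auxiliary} continuous map $\widehat f$ chosen so that $D_{\omega_T}$ is correctly aligned with $D_{\omega_{-T}}$ under $\widehat f$ (such a map always exists, e.g.\ an affine stretch). One then forms the cyclic product map $\widehat F(z_{-T},\ldots,z_T)=(\widehat f(z_T),f(z_{-T}),\ldots,f(z_{T-1}))$, which places the problem back in the periodic setting of Lemma~\ref{lem:product_aligned} and Proposition~\ref{prop:periodic-orbit-higher-dimensions}(i). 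The resulting fixed point of $\widehat F$ gives the desired orbit segment for $f$; the artificial relation $p_{-T}=\widehat f(p_T)$ is simply discarded.

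Your two proposed workarounds (a dimension-reduced Sperner argument, or direct composition of covering relations) would also succeed, but the closing-map device is shorter and lets you invoke the periodic machinery verbatim rather than re-deriving a non-periodic analog of Lemma~\ref{lem:product_aligned}. Your interior argument via Definition~\ref{defn:win-nd}(i.a)--(i.b) is a nice addition that the paper leaves implicit.
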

\begin{proof} (i) It is enough  to prove that for each $\omega\in\Omega_\Gamma$, for the infinite of  windows $D_{\omega_t}$, $t\in\mathbb{Z}$, there exists a point $p_0$ in $D_{\omega_0}$ such that $f^t(p_0)\in D_{\omega_t}$.
This follows from the following:

\emph{Claim 1.  If  $\{D_t\}_{t=1,\ldots, k}$, is a sequence of windows such that for every $t=1,\ldots, k-1$, $D_t$ is correctly aligned with $D_{t+1}$ under $f$, then there exists an orbit $(p_t)_{t=1,\ldots,k}$ such that $p_{t+1}=f(p_t)$, and $p_t\in D_t$ for all $t$.}

\emph{Proof of Claim 1.} We can always define a continuous map $\widehat f$ such that $D_n$ is correctly aligned with $D_0$ under $\widehat f$. Then, similarly to \eqref{eqn:F} we define the map
\begin{equation}
\label{eqn:F}
\widehat F(z_1,\ldots, z_k)=(\widehat f(z_k), f(z_1),\ldots,f(z_{k-1})).
\end{equation}
Denoting $\chi_D(z_1,\ldots, z_k)=(\chi_{D_1}(z_1),\ldots,\chi_{D_k}(z_{k}))$, as in Lemma \ref{lem:product_aligned} and Proposition \ref{prop:periodic-orbit-higher-dimensions}, we obtain that $\chi_D(\prod_{t=1}^{k} [0,1]^n)$ is correctly aligned to itself under $\widehat F$, hence there is a fixed point for $\widehat F$. This yields an orbit of $f$ as in the claim; the fact that $p_1=\widehat{f}(p_k)$ is irrelevant for the dynamics.

\emph{Claim 2.  If  $D_t$, $t\in\mathbb{Z}$, is a sequence of windows such that for every $t$, $D_t$ is correctly aligned with $D_{t+1}$ under $f$, then there exists an orbit $(p_t)_{t\in\mathbb{Z}}$ such that $p_{t+1}=f(p_t)$, and $p_t\in D_t$ for all $t$.}

\emph{Proof of Claim 2.} By \emph{Claim 1} for each  finite sequence of windows \[D_{-N}, \ldots,D_0,\ldots, D_{N},\] there is a point $p_0^N\in D_0$ such that   $f^t(p^N_0)\in D_t$ for all $t\in\{-N,\ldots, N\}$. Taking a convergent subsequence  $p^{k_N}_0$ of $p^N_0$ with  $p^{k_N}_0\to p_0$ as $N\to\infty$, we obtain that the orbit of $p_0$ is as claimed.

(ii) The proof follows in the same way as for Proposition \ref{prop:periodic-orbit-higher-dimensions}-(ii).
\end{proof}

For a related statement to Proposition \ref{prop:symbolic-dynamics-higher-dimensions} see \cite{gidea1999conley}.

\section{Application}\label{sec:application}
In this section we illustrate the methodology developed in this paper on a simple example. Namely, we consider the H\'enon Map, defined as $f(x,y)=(a-x^2+by,x)$ for   $a=1.25$  and $b=0.3$.  We will use the Sperner lemma-based approach to show the existence of  a period-$7$ orbit.

\begin{figure}
\includegraphics[width=0.75\textwidth]{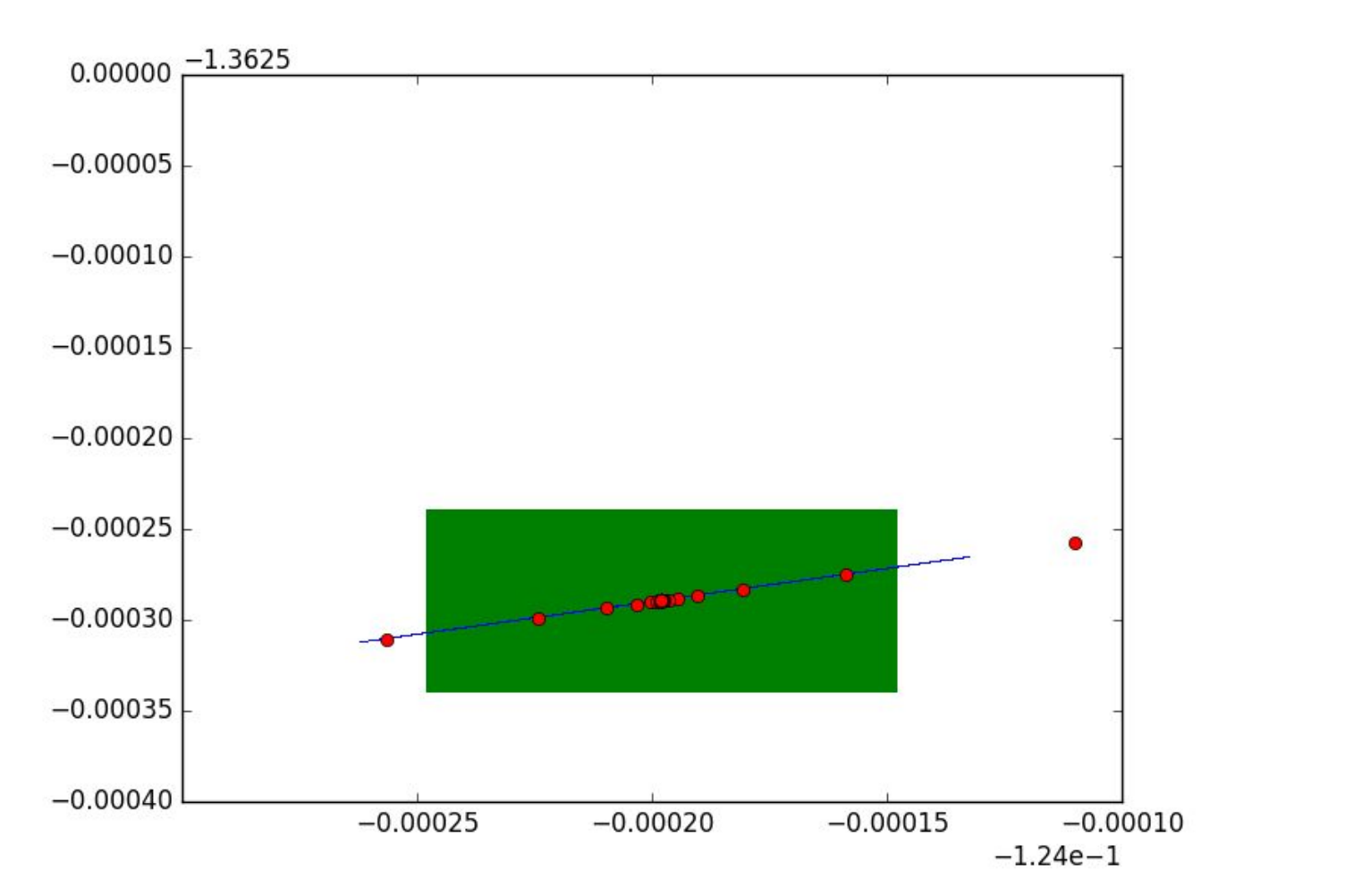}
\caption{The window $D$ (green) and its image $f^7(D)$ (blue) under the seventh iterate of the map.}
\label{fig:window_seven_iterate}
\end{figure}
\begin{figure}
\includegraphics[width=0.75\textwidth]{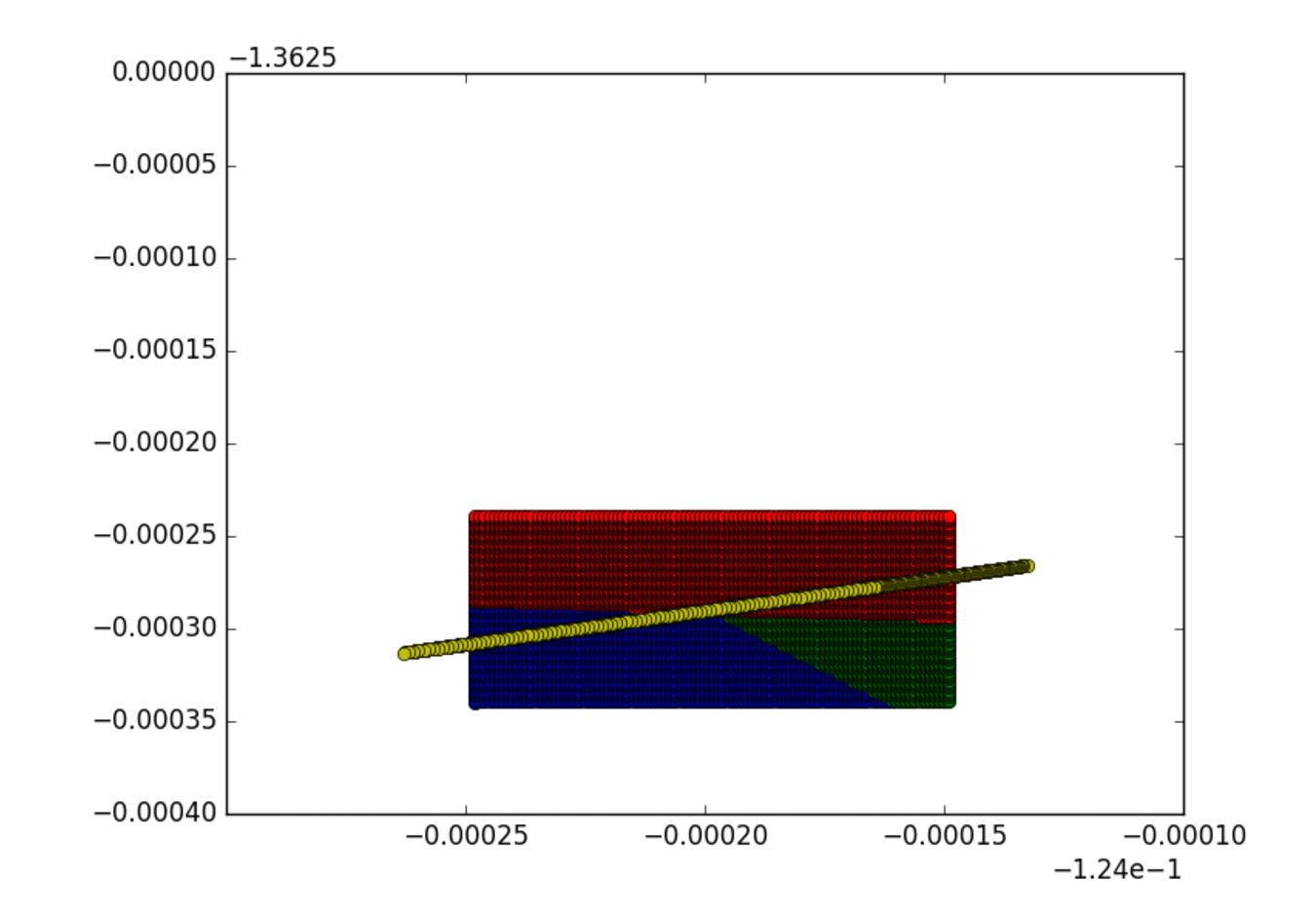}
\caption{Sperner labeling of $D$; the label $1$ is shown in blue, $2$ in green, and $3$ in red.}
\label{fig:Sperner_labeling}
\end{figure}

\begin{figure}[h]
\includegraphics[width=0.75\textwidth]{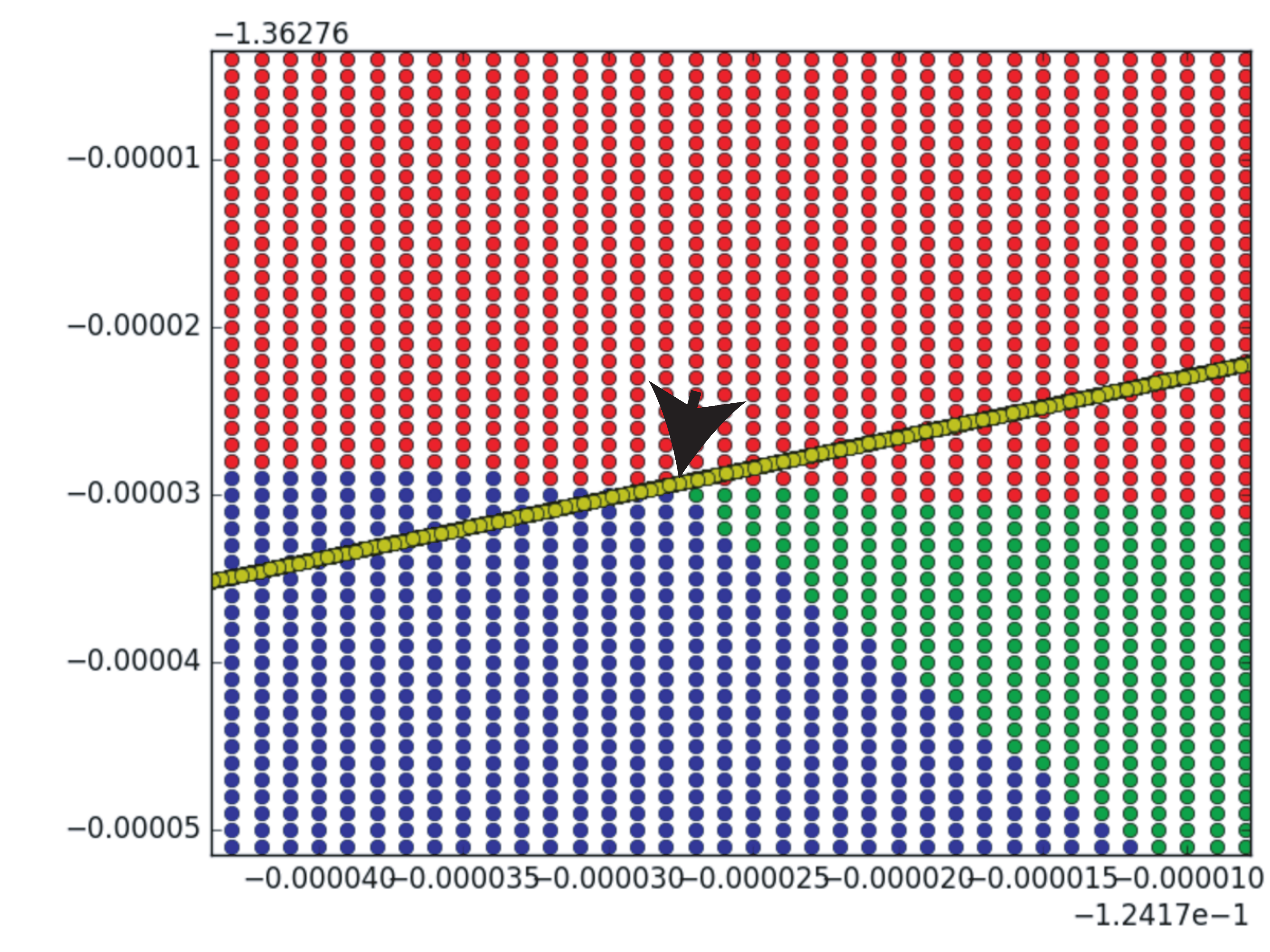}
\caption{Completely labeled square determined by the grid}
\label{fig:completely_labeled}
\end{figure}
\begin{figure}
\includegraphics[width=0.75\textwidth]{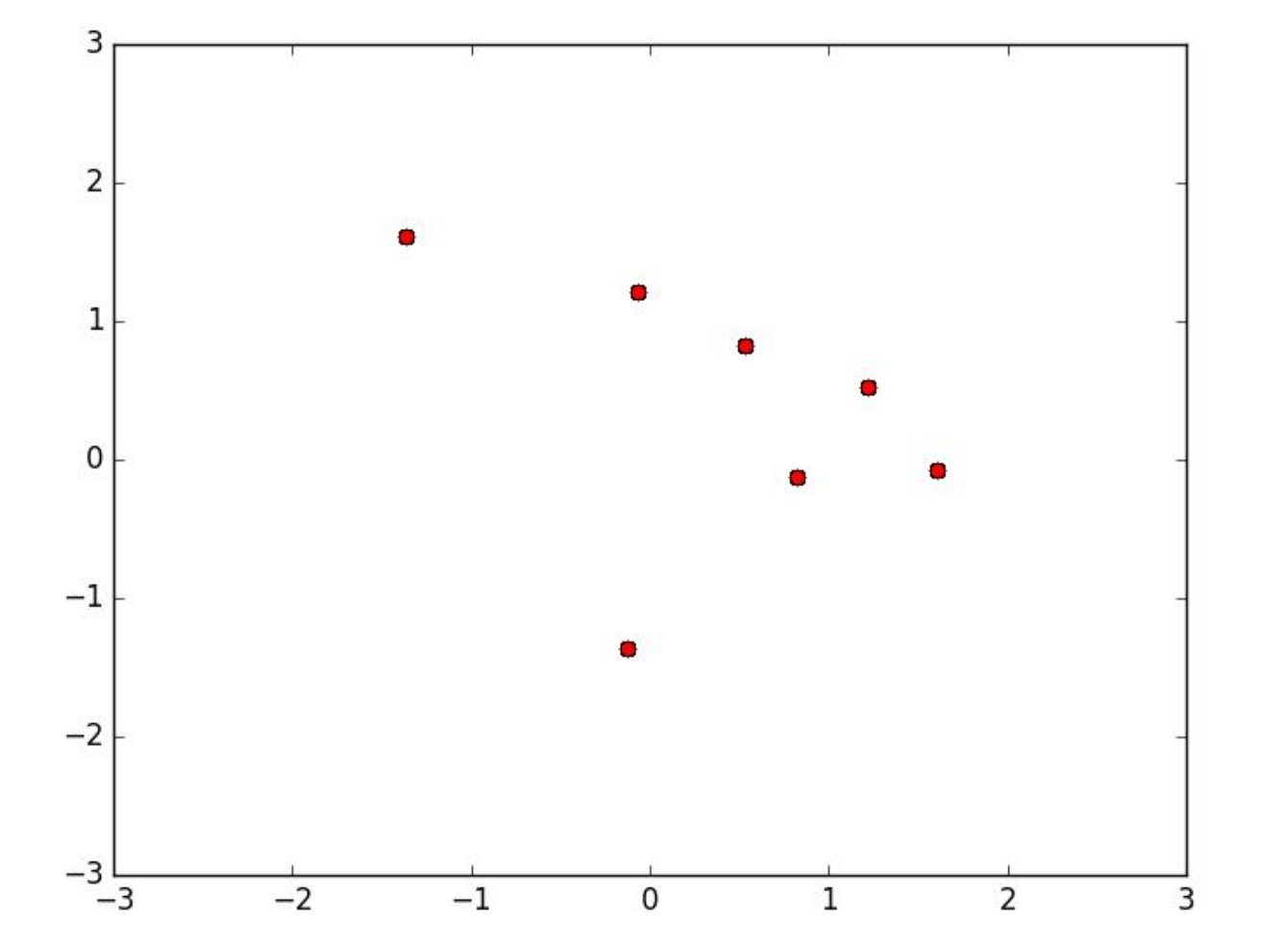}
\caption{Approximate period-$7$ orbit.}
\label{fig:period_seven}
\end{figure}

We build a window $D$ around the point $(-0.12,-1.36)$, which is a `first guess' of a period seven point, and compute its seventh iterate $f^7(D)$. See Fig. \ref{fig:window_seven_iterate}.
We define a fine grid on $D$ and we label the points of the grid according to \emph{Condition O}. We further reduce the labeling to only three labels $(1,1)\rightarrow 1$, $(-1,1)\rightarrow 2$, and $(1,-1), (-1,-1)\rightarrow 3$, as in Section \ref{sec:2D_windows}.  This  labeling is shown color coded in Fig.~\ref{fig:Sperner_labeling}. It is easy to see that the boundary of $D$ has a  { non-degenerate labeling} and $\ind_D(\phi)=1$, thus $D$ is correctly aligned to itself under $f^7$.

A completely labeled square in the grid decomposition occurs where the three different labels `meet'; this square has vertices at $x=-0.124198$, $y=-1.36279$ (red), $x=-0.124197$, $y=-1.36279$ (red),
$x=-0.124198$, $y=-1.36279$ (blue), $x=-0.124197$, $y= -1.36279$ (green); see Fig.~\ref{fig:completely_labeled}. The corresponding approximate period-$7$ orbit is shown in Fig.~\ref{fig:period_seven}. { It is easy to see that the above square has non-zero index. Thus, there exists a true period-$7$ orbit with an initial point near the square.}

\section*{Acknowledgement}
Both authors are  grateful to Meir Retter who helped with the computer code for the example in Section \ref{sec:application}.
The first author is grateful  to Zhonggang (Zeke)  Zeng, who made us aware of the  numerical analysis literature related to the Sperner Lemma, and to Kathleen Dexter-Mitchell, who read an early version of this work.

\bibliographystyle{alpha}

\bibliography{Sperner_biblio}
\end{document}